\theoremstyle{plain}
\newtheorem{definition}{Definition}
\newtheorem{lemma}{Lemma}
\newtheorem{proposition}{Proposition}
\newtheorem{remark}{Remark}
\newtheorem{theorem}{Theorem}
\numberwithin{equation}{section}
\begin{document}
\title{Sharp Global well-posedness and scattering for the radial conformal nonlinear wave equation}
\date{\today}
\author{Benjamin Dodson}
\maketitle

\begin{abstract}
In this paper we prove global well-posedness and scattering for the conformal, defocusing, nonlinear wave equation with radial initial data in the critical Sobolev space, for dimensions $d \geq 4$. This result extends a previous result proving sharp scattering in the three dimensional case.
\end{abstract}

\section{Introduction}
In this paper we prove global well-posedness and scattering for the conformal wave equation
\begin{equation}\label{1.1}
u_{tt} - \Delta u + |u|^{\frac{4}{d - 1}} u = 0, \qquad u(0, x) = u_{0}, \qquad u_{t}(0, x) = u_{1}, \qquad u : \mathbb{R} \times \mathbb{R}^{d} \rightarrow \mathbb{R},
\end{equation}
with radially symmetric initial data in dimensions $d \geq 4$. This continues an earlier study we began in \cite{dodson2018globalAPDE}, \cite{dodson2018global}, \cite{dodson2018global2}, and \cite{dodson2022global}. See also \cite{miao2020global}. \medskip

Specifically, we prove a sharp scattering result for radially symmetric initial data.
\begin{theorem}\label{t1.1}
The initial value problem $(\ref{1.1})$ is globally well-posed and scattering for any radially symmetric initial data $u_{0} \in \dot{H}^{1/2}(\mathbb{R}^{d})$ and $u_{1} \in \dot{H}^{-1/2}(\mathbb{R}^{d})$. Moreover, there exists a function $C(d, \| u_{0} \|_{\dot{H}^{1/2}}, \| u_{1} \|_{\dot{H}^{-1/2}})$,
\begin{equation}\label{1.2}
C : \mathbb{Z}_{\geq 4} \times [0, \infty) \times [0, \infty) \rightarrow [0, \infty),
\end{equation}
such that if $u$ is the solution to $(\ref{1.1})$ with radially symmetric initial data in $u_{0}$, $u_{1}$,
\begin{equation}\label{1.3}
\| u \|_{L_{t,x}^{\frac{2(d + 1)}{d - 1}}(\mathbb{R} \times \mathbb{R}^{d})} \leq C(d, \| u_{0} \|_{\dot{H}^{1/2}}, \| u_{1} \|_{\dot{H}^{-1/2}}).
\end{equation}
\end{theorem}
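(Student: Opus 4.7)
The plan is to follow the concentration--compactness / rigidity framework of Kenig--Merle, adapted to the conformal nonlinear wave equation at the critical $\dot H^{1/2}\times \dot H^{-1/2}$ regularity as in the author's three-dimensional work \cite{dodson2022global}.

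\textbf{Local theory and small data.} Strichartz estimates at the critical regularity give local well-posedness, small-data scattering, and a stability/perturbation theory in $\dot H^{1/2}\times \dot H^{-1/2}$. For $d=4,5$ the fractional chain rule for $|u|^{4/(d-1)}u$ at order $1/2$ is routine; for $d\geq 6$ the nonlinearity is only $C^{1,\alpha}$, so a paradifferential / Besov-space argument must replace the fractional chain rule throughout this step.

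\textbf{Reduction to a minimal blowup solution.} Assume \eqref{1.3} fails, so there exist radial data of uniformly bounded $\dot H^{1/2}\times \dot H^{-1/2}$ norm but arbitrarily large $L^{2(d+1)/(d-1)}_{t,x}$ solution norm. A Bahouri--G\'erard--Keraani profile decomposition for the conformal wave equation, combined with the stability theory, extracts a radial almost-periodic critical element: a nonscattering solution $u$ with $\{N(t)^{-(d-1)/2} u(t,\cdot/N(t))\}$ pre-compact in $\dot H^{1/2}$ for some frequency-scale function $N(t)>0$. Radial symmetry kills the translation modulation, so $N(t)$ is the only remaining freedom; standard arguments then split into a frequency-cascade / self-similar regime ($N(t)\to\infty$) and a quasi-soliton regime ($N(t)\sim 1$ on a half-line).

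\textbf{Rigidity via the conformal identity.} The decisive tool is the conformal conservation law for \eqref{1.1}: applying the multiplier $(t^{2}+|x|^{2})u_{t}+2t x\cdot\nabla u+(d-1) t u$ to the equation produces a monotone quantity whose time derivative controls a weighted $L^{2(d+1)/(d-1)}_{x}$ density of $u$. Running this identity on well-chosen truncated light cones for the minimal element, and estimating the resulting boundary and error terms using almost-periodicity together with the radial Sobolev embedding $|x|^{(d-1)/2}|u(x)|\lesssim \|u\|_{\dot H^{1/2}_{\mathrm{rad}}}$, rules out both scenarios and contradicts the assumed infinite spacetime norm. The main obstacle, compared with \cite{dodson2022global}, is twofold: first, for $d\geq 6$ the subquadratic nonlinearity forces the fractional-calculus step of the local theory, the stability argument, and the justification of the conformal identity on the minimal element to be re-done paradifferentially; second, the balance between the conformal weight $(t^{2}+|x|^{2})$ and the pointwise radial decay $|x|^{-(d-1)/2}$ must be verified uniformly in $d\geq 4$ so that the rigidity step closes in every dimension simultaneously.
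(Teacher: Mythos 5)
Your plan runs into a structural obstruction before the rigidity step even starts. The Kenig--Merle reduction to an almost-periodic minimal counterexample requires that the quantity you induct on be conserved (or at least a priori controlled) along the flow: minimality must propagate in time so that applying the profile decomposition to $u(t_n)$ at an arbitrary sequence of times yields a single profile and hence pre-compactness of the orbit modulo scaling. Here the critical norm is $\dot H^{1/2}\times\dot H^{-1/2}$, which is \emph{not} conserved by \eqref{1.1}, and neither the energy \eqref{1.7} nor the conformal energy controls it. Assuming only that the data are bounded in $\dot H^{1/2}\times\dot H^{-1/2}$, you have no bound on $\| u(t)\|_{\dot H^{1/2}} + \|u_t(t)\|_{\dot H^{-1/2}}$ for $t\neq 0$, so the extraction of an almost-periodic critical element cannot be carried out; this is precisely why the existing concentration--compactness results at this regularity (\cite{dodson2015scattering}, \cite{shen2014energy}, \cite{dodson2015scattering1}) are \emph{conditional} on a uniform-in-time bound of the critical norm, ruling out only type II behavior. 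Moreover, even granting a critical element, your rigidity step would apply the conformal multiplier identity to it, but that identity requires finite weighted quantities (roughly $(t^2+|x|^2)$ times energy densities, i.e.\ data in weighted $\dot H^1\times L^2$), which a merely $\dot H^{1/2}$ almost-periodic solution need not have; almost periodicity modulo scaling supplies neither the extra regularity nor the spatial decay needed to justify or exploit \eqref{5.4}. The $d\geq 6$ paradifferential issue you flag is comparatively minor.

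The paper takes a different route that avoids the minimal-counterexample machinery altogether: a Fourier truncation (Bourgain/Kenig--Ponce--Vega style) splitting $u_0=v_0+w_0$, $u_1=v_1+w_1$, where $(v_0,v_1)$ is frequency- and space-truncated so that its conformal energy is finite, and $(w_0,w_1)$ is small in $\dot H^{1/2}\times\dot H^{-1/2}$. The small piece is controlled globally by the radial Strichartz estimates of Theorem \ref{t4.3} and the weighted bounds of Lemma \ref{l4.4} (for $d>4$ a modified nonlinear splitting with the cutoff $\chi(u/w)$, Theorem \ref{t7.1}, is needed because the nonlinearity is subquadratic); the cross terms in $\frac{d}{dt}\mathcal E(v)$ are absorbed by a bootstrap combining the cut-off Morawetz estimate (Proposition \ref{p5.3}) and local energy decay (Proposition \ref{p5.4}), yielding $\int_1^\infty \mathcal E(t)t^{-2}\,dt<\infty$ and hence a finite scattering norm. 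The profile decomposition enters only at the very end (Section 7), not to build a minimal blowup solution but to upgrade the data-dependent bound $C(u_0,u_1)$ to the norm-dependent bound \eqref{1.3}. If you want to salvage your approach, you would have to either prove an a priori bound on the critical norm (which is essentially the theorem itself) or find a substitute functional that is both coercive at $\dot H^{1/2}$ regularity and monotone under the flow; as written, the proposal reproduces only the known conditional results.
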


\begin{definition}[Global well-posedness and scattering]
Here we use the standard definitions of global well-posedness and scattering. Specifically, global well-posedness means that a solution to $(\ref{1.1})$ exists, the solution is unique, and the solution depends continuously on the initial data. In this paper, a solution means a solution $u \in L_{t, loc}^{\frac{2(d + 1)}{d - 1}} L_{x}^{\frac{2(d + 1)}{d - 1}}$ which satisfies Duhamel's principle,
\begin{equation}\label{1.3.1}
u(t) = \cos(t \sqrt{-\Delta}) u_{0} + \frac{\sin(t \sqrt{-\Delta})}{\sqrt{-\Delta}} u_{1} - \int_{0}^{t} \frac{\sin((t - \tau) \sqrt{-\Delta}}{\sqrt{-\Delta}} |u(\tau)|^{\frac{4}{d - 1}} u(\tau) d\tau.
\end{equation}
By scattering, we mean that there exist $u_{0}^{+}, u_{0}^{-} \in \dot{H}^{1/2}$, $u_{1}^{+}, u_{1}^{-} \in \dot{H}^{-1/2}$, such that,
\begin{equation}\label{1.3.2}
\lim_{t \rightarrow \infty} \| u(t) - \cos(t \sqrt{-\Delta}) u_{0}^{+} - \frac{\sin(t \sqrt{-\Delta})}{\sqrt{-\Delta}} u_{1}^{+} \|_{\dot{H}^{1/2}} = 0,
\end{equation}
\begin{equation}
\lim_{t \rightarrow \infty} \| \partial_{t}(u(t) - \cos(t \sqrt{-\Delta}) u_{0}^{+} - \frac{\sin(t \sqrt{-\Delta})}{\sqrt{-\Delta}} u_{1}^{+}) \|_{\dot{H}^{-1/2}} = 0,
\end{equation}
\begin{equation}\label{1.3.3}
\lim_{t \rightarrow -\infty} \| u(t) - \cos(t \sqrt{-\Delta}) u_{0}^{-} - \frac{\sin(t \sqrt{-\Delta})}{\sqrt{-\Delta}} u_{1}^{-} \|_{\dot{H}^{1/2}} = 0,
\end{equation}
and
\begin{equation}\label{1.3.4}
\lim_{t \rightarrow -\infty} \| \partial_{t}(u(t) - \cos(t \sqrt{-\Delta}) u_{0}^{-} - \frac{\sin(t \sqrt{-\Delta})}{\sqrt{-\Delta}} u_{1}^{-}) \|_{\dot{H}^{-1/2}} = 0.
\end{equation}
\end{definition}

Theorem $\ref{t1.1}$ is sharp due to the scaling symmetry. Specifically, if $u$ solves $(\ref{1.1})$, then for any $\lambda > 0$,
\begin{equation}\label{1.4}
v(t, x) = \lambda^{\frac{d - 1}{2}} u(\lambda t, \lambda x),
\end{equation}
also solves $(\ref{1.1})$ with initial data
\begin{equation}\label{1.5}
v(0, x) = \lambda^{\frac{d - 1}{2}} u_{0}(\lambda x), \qquad v_{t}(0, x) = \lambda^{\frac{d + 1}{2}} u_{1}(\lambda x).
\end{equation}
Note that $\| v(0, x) \|_{\dot{H}^{1/2}} = \| u_{0} \|_{\dot{H}^{1/2}}$ and $\| v_{t}(0, x) \|_{\dot{H}^{-1/2}} = \| u_{1} \|_{\dot{H}^{-1/2}}$ for any $\lambda > 0$. This fact was well-exploited by \cite{lindblad1995existence} to prove ill-posedness for initial data in $\dot{H}^{s} \times \dot{H}^{s - 1}$ for $s < \frac{1}{2}$. See also \cite{christ2003asymptotics}.

\subsection{Outline of previous results}
Previous interest in the conformal wave equation, $(\ref{1.1})$, has mainly focused on the $d = 3$ case. In this case, $(\ref{1.1})$ is the cubic wave equation,
\begin{equation}\label{1.6}
u_{tt} - \Delta u + u^{3} = 0.
\end{equation}
It has been known for a long time that global well-posedness and scattering hold for initial data in $\dot{H}^{1} \times L^{2}$ that decays sufficiently fast as $|x| \rightarrow \infty$, see \cite{strauss1981nonlinear} and \cite{strauss1968decay}. Observe that such initial data has the conserved energy,
\begin{equation}\label{1.7}
E(u) = \frac{1}{2} \int |\nabla u(t, x)|^{2} dx + \frac{1}{2} \int |u_{t}(t, x)|^{2} dx + \frac{d - 1}{2(d + 1)} \int |u(t,x)|^{\frac{2(d + 1)}{d - 1}} dx.
\end{equation}
Conservation of the conformal energy gives scattering, which will be shown in section three.\medskip

For initial data in $\dot{H}^{1} \cap \dot{H}^{1/2} \times L^{2} \cap \dot{H}^{-1/2}$, global well-posedness follows easily from $(\ref{1.7})$ and the local well-posedness result of \cite{lindblad1995existence}.
\begin{theorem}\label{t1.2}
The initial value problem $(\ref{1.1})$ is locally well-posed on some interval $(-T, T)$ for any $(u_{0}, u_{1}) \in \dot{H}^{1/2} \times \dot{H}^{-1/2}$, where $T = T(u_{0}, u_{1})$. Global well-posedness and scattering hold for small initial data.

Moreover, the solution satisfies
\begin{equation}\label{1.8}
u \in L_{t}^{\infty} \dot{H}^{1/2}((-T, T) \times \mathbb{R}^{d}), \qquad u_{t} \in L_{t}^{\infty} \dot{H}^{-1/2}((-T, T) \times \mathbb{R}^{d}), \qquad u \in L_{t, loc}^{\frac{2(d + 1)}{d - 1}} L_{x}^{\frac{2(d + 1)}{d - 1}}((-T, T) \times \mathbb{R}^{d}).
\end{equation}

Furthermore, if the solution only exists on an interval $[0, T_{+})$ for some $T_{+} < \infty$, then
\begin{equation}\label{1.9}
\lim_{T \nearrow T_{+}} \| u \|_{L_{t,x}^{\frac{2(d + 1)}{d - 1}}([0, T] \times \mathbb{R}^{d})} = \infty.
\end{equation}
By time reversal symmetry, the analogous result holds on $(-T_{-}, 0]$.
\end{theorem}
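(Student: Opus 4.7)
The plan is to prove Theorem \ref{t1.2} by a Banach fixed-point argument in the conformal Strichartz space, as in \cite{lindblad1995existence}. The first step is to identify the right function space. Direct computation shows that the diagonal ``conformal'' pair $(q,r) = \bigl(\tfrac{2(d+1)}{d-1}, \tfrac{2(d+1)}{d-1}\bigr)$ is wave-admissible and sits exactly at regularity $s = 1/2$: both $\tfrac{1}{q} + \tfrac{d}{r} = \tfrac{d-1}{2}$ and $\tfrac{2}{q} + \tfrac{d-1}{r} = \tfrac{d-1}{2}$ hold, and since $q > 2$ we are away from the Keel--Tao endpoint. Hence the classical Ginibre--Velo Strichartz estimate gives, for $\square u = F$ with data $(u_0, u_1)$ on an interval $I \ni 0$,
\begin{equation*}
\|u\|_{L^\infty_t \dot H^{1/2}(I)} + \|u_t\|_{L^\infty_t \dot H^{-1/2}(I)} + \|u\|_{L^{\frac{2(d+1)}{d-1}}_{t,x}(I \times \mathbb{R}^d)} \lesssim \|u_0\|_{\dot H^{1/2}} + \|u_1\|_{\dot H^{-1/2}} + \|F\|_{L^{\frac{2(d+1)}{d+3}}_{t,x}(I \times \mathbb{R}^d)},
\end{equation*}
the dual exponent $\tfrac{2(d+1)}{d+3}$ being precisely H\"older-conjugate to $\tfrac{2(d+1)}{d-1}$.

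The second step is the nonlinear estimate, which works without any fractional chain rule because of the algebraic identity $1 + \tfrac{4}{d-1} = \tfrac{d+3}{d-1}$:
\begin{equation*}
\bigl\| |u|^{4/(d-1)} u \bigr\|_{L^{\frac{2(d+1)}{d+3}}_{t,x}} = \|u\|_{L^{\frac{2(d+1)}{d-1}}_{t,x}}^{(d+3)/(d-1)},
\end{equation*}
with the matching difference bound coming from the pointwise inequality $\bigl||a|^{4/(d-1)}a - |b|^{4/(d-1)}b\bigr| \lesssim (|a|^{4/(d-1)} + |b|^{4/(d-1)}) |a-b|$, which holds in every dimension (including the non-smooth regime $\tfrac{4}{d-1} < 1$ in which $d \geq 6$).

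The third step is the iteration itself. I would set up the Duhamel map $\Phi$ from \eqref{1.3.1} on the ball of radius $2C\eta_T$ in $L^{\frac{2(d+1)}{d-1}}_{t,x}((-T,T)\times\mathbb{R}^d)$, where $\eta_T$ denotes the Strichartz norm of the free evolution of $(u_0, u_1)$ on $(-T, T)$. Since the global Strichartz estimate places the free evolution in $L^{\frac{2(d+1)}{d-1}}_{t,x}(\mathbb{R}\times\mathbb{R}^d)$, dominated convergence forces $\eta_T \to 0$ as $T \to 0$. Taking $T$ so small that $C\bigl(2C\eta_T\bigr)^{(d+3)/(d-1)}$ is much smaller than $C\eta_T$ makes $\Phi$ a contraction, yielding a unique solution satisfying \eqref{1.8}; continuous dependence on data follows from the same estimates applied to the difference of two solutions. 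For initial data of sufficiently small $\dot H^{1/2}\times \dot H^{-1/2}$ norm, the quantity $\eta_T$ is globally small by Strichartz, so the argument closes on all of $\mathbb{R}$, and scattering is produced in the usual way: the Duhamel tail $\int_t^\infty \tfrac{\sin((t-\tau)\sqrt{-\Delta})}{\sqrt{-\Delta}}|u|^{4/(d-1)}u\,d\tau$ is Cauchy in $\dot H^{1/2}$, giving well-defined profiles $(u_0^\pm, u_1^\pm)$ as required by \eqref{1.3.2}--\eqref{1.3.4}.

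Finally, the blow-up criterion \eqref{1.9} is standard: assuming $\|u\|_{L^{\frac{2(d+1)}{d-1}}_{t,x}([0, T_+))} < \infty$, I would partition $[0, T_+)$ into finitely many subintervals $J_k$ on each of which $\|u\|_{L^{\frac{2(d+1)}{d-1}}_{t,x}(J_k)}$ is below the threshold from the local theory, bootstrap via the Strichartz estimate above to obtain $\sup_{t \in [0, T_+)} \bigl(\|u(t)\|_{\dot H^{1/2}} + \|u_t(t)\|_{\dot H^{-1/2}}\bigr) < \infty$, and then restart local existence from a time close to $T_+$ to extend the solution beyond $T_+$, contradicting maximality. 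The only real obstacle in this program is keeping the non-smooth nonlinearity under control in high dimensions, but the H\"older-type difference estimate above handles this uniformly in $d \geq 4$; the conformal exponent is tailor-made so that contraction closes purely at the level of Lebesgue norms, with no derivatives to redistribute.
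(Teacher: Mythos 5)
Your overall scheme --- a contraction in $L^{\frac{2(d+1)}{d-1}}_{t,x}$ based on the conformal Strichartz estimate $(\ref{4.5})$, with the nonlinearity handled purely by H\"older thanks to $1+\tfrac{4}{d-1}=\tfrac{d+3}{d-1}$, smallness of the free evolution on short intervals obtained by dominated convergence, and scattering states built from the Duhamel tail --- is exactly the route the paper takes: it defers Theorem \ref{t1.2} to Lindblad--Sogge and reproduces precisely this Picard iteration in the proof of Theorem \ref{t4.2}. Your admissibility check for the conformal pair, the elementary difference bound $\bigl||a|^{4/(d-1)}a-|b|^{4/(d-1)}b\bigr|\lesssim(|a|^{4/(d-1)}+|b|^{4/(d-1)})|a-b|$ (valid also for $\tfrac{4}{d-1}<1$), and the derivation of $(\ref{1.8})$ from the same estimates are all correct.

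The one step that does not work as literally written is the final continuation step in the blow-up criterion: after obtaining $\sup_{t<T_+}\bigl(\|u(t)\|_{\dot H^{1/2}}+\|u_t(t)\|_{\dot H^{-1/2}}\bigr)<\infty$ you propose to ``restart local existence from a time close to $T_+$'' to pass $T_+$. At critical regularity this is not enough: as you yourself note, $T=T(u_0,u_1)$ depends on the profile of the data and not only on its norm, so a uniform critical Sobolev bound gives no lower bound on the existence time from times $t_K\nearrow T_+$, and the restarted solutions could a priori expire before $T_+$. The standard repair stays entirely inside your framework: take the last subinterval $J_K=[t_K,T_+)$ on which $\|u\|_{L^{\frac{2(d+1)}{d-1}}_{t,x}(J_K\times\mathbb{R}^d)}\le\eta$ is small; Duhamel from $t_K$ together with $(\ref{4.5})$ shows the free evolution $S(t-t_K)(u(t_K),u_t(t_K))$ has conformal norm at most $\eta+C\eta^{\frac{d+3}{d-1}}$ on $J_K$, and since its global conformal norm is finite, dominated convergence gives some $\delta>0$ with norm at most $3\eta$ on $[t_K,T_++\delta]$; running your contraction on $[t_K,T_++\delta]$ then produces a solution extending strictly past $T_+$ that coincides with $u$ on $J_K$ by uniqueness, the desired contradiction. (Equivalently, one can show $(u(t),u_t(t))$ is Cauchy in $\dot H^{1/2}\times\dot H^{-1/2}$ as $t\to T_+$ and solve from the limiting data at $T_+$.) With that replacement your argument is complete and coincides with the paper's (cited) proof.
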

Therefore, $(\ref{1.1})$ has a local solution, and by $(\ref{1.7})$, blowup cannot occur in finite time.\medskip

Using the Fourier truncation method, \cite{kenig2000global} proved global well-posedness for $(\ref{1.6})$ for initial data in $H^{s} \times H^{s - 1}$ for any $s > \frac{3}{4}$. This method, introduced by \cite{bourgain1998refinements} for the nonlinear Schr{\"o}dinger equation, utilizes the smoothing effect of the Duhamel term
\begin{equation}\label{1.10}
\int_{0}^{t} \frac{\sin((t - \tau) \sqrt{-\Delta})}{\sqrt{-\Delta}} u(\tau)^{3} d\tau.
\end{equation}
The data was then split into a high frequency piece that was small and a low frequency piece that is in $\dot{H}^{1} \times L^{2}$. Global well-posedness holds for $(\ref{1.6})$ with either piece as the initial data (from Theorem $\ref{t1.2}$ and $(\ref{1.7})$). For $s > \frac{3}{4}$, it is possible to ``paste" the two solutions together and obtain a solution to $(\ref{1.6})$ for initial data in $H^{s} \times H^{s - 1}$.

This work was subsequently extended by many authors. See \cite{gallagher2003global}, \cite{bahouri2006global}, \cite{roy2007adapted}, \cite{roy2007global}, \cite{dodson2018global1}, \cite{dodson2019global} for subsequent improvements on this result.\medskip

A second approach that has proven to be very fruitful is the study of type two blowup. There are two different ways in which scattering can fail. The first way is if the $\dot{H}^{1/2}$ norm is unbounded. Since the solution to the linear wave equation is a unitary operator, it is clear that one of $(\ref{1.3.1})$--$(\ref{1.3.4})$ would fail. This is called ``type one blowup".

Scattering could also fail and yet the $\dot{H}^{1/2} \times \dot{H}^{-1/2}$ norm remain bounded. This behavior is frequently observed for a soliton, although see also the pseudoconformal transformation of the soliton for the nonlinear Schr{\"o}dinger equation (see for example \cite{merle1993determination}).

In \cite{dodson2015scattering} we proved global well-posedness and scattering for radially symmetric solutions to $(\ref{1.6})$ with $\| u(t) \|_{\dot{H}^{1/2}} + \| u_{t}(t) \|_{\dot{H}^{-1/2}}$ uniformly bounded on the entire time of its existence. See also  \cite{shen2014energy}, \cite{dodson2015scattering1}, and \cite{dodson2020scattering}. The proof in \cite{dodson2015scattering} uses the concentration compactness argument, excluding the existence of a non-scattering solution of minimal size.\medskip

It is worth noting that the result in \cite{dodson2015scattering} (and in  \cite{shen2014energy}, \cite{dodson2015scattering1}) holds for both the defocusing and the focusing case. This is because, unlike in the energy-critical case, there does not exist a soliton solution to $(\ref{1.6})$ that lies in $\dot{H}^{1/2} \times \dot{H}^{-1/2}$.

Still, for the focusing, cubic wave equation,
\begin{equation}\label{1.11}
u_{tt} - \Delta u - u^{3} = 0,
\end{equation}
there do exist solutions for which the $\dot{H}^{1/2} \times \dot{H}^{-1/2}$ is unbounded. Indeed, for this equation, the energy is given by
\begin{equation}\label{1.12}
E(u) = \frac{1}{2} \int |\nabla u(t,x)|^{2} dx + \frac{1}{2} \int |u_{t}(t, x)|^{2} dx - \frac{d - 1}{2(d + 1)} \int |u(t,x)|^{\frac{2(d + 1)}{d - 1}} dx,
\end{equation}
which unlike $(\ref{1.7})$ does not prevent any norms of a solution to $(\ref{1.11})$ from getting arbitrarily large. In fact, it is well known that there exist solutions to $(\ref{1.1})$ for which the $\dot{H}^{1/2} \times \dot{H}^{-1/2}$ norm is unbounded. See \cite{duyckaerts2023global} for the state of the art in this direction and a description of prior results.

\subsection{Outline of the argument}
Theorem $\ref{t1.1}$ is proved using the Fourier truncation method and conservation of the conformal energy. Specifically, inspired by \cite{kenig2000global}, we split the initial data into two pieces,
\begin{equation}\label{1.13}
u_{0} = v_{0} + w_{0}, \qquad u_{1} = v_{1} + w_{1},
\end{equation}
where $(v_{0}, v_{1})$ has finite conformal energy and $(w_{0}, w_{1})$ has small $\dot{H}^{1/2} \times \dot{H}^{-1/2}$ norm.

Following \cite{strauss1981nonlinear} and \cite{lindblad1995existence}, we know that $(\ref{1.1})$ is scattering with initial data $(v_{0}, v_{1})$ and $(w_{0}, w_{1})$. Therefore, it remains to handle the cross-terms. The contribution of the cross terms $F$ is of the form
\begin{equation}\label{1.14}
\langle (t + |x|) Lv + (d - 1) v, (t + |x|) F \rangle, \qquad L = \partial_{t} + \frac{x}{|x|} \cdot \nabla,
\end{equation}
where $F$ contains terms of the form $|w|^{\frac{4}{d - 1}} |v|$. In \cite{dodson2022global},
\begin{equation}\label{1.15}
(\ref{1.14}) \lesssim \| (t + |x|) Lv + (d - 1) v \|_{L^{2}} \| (t + |x|) |w|^{\frac{2}{d - 1}} \|_{L^{\infty}} \| w \|_{L^{\frac{2(d + 1)}{d - 1}}}^{\frac{d - 3}{d - 1}} \| v ||_{L^{\frac{2(d + 1)}{d - 1}}}^{\frac{4}{d - 1}}.
\end{equation}
Combining the radially symmetric Sobolev embedding theorem and the dispersive estimate,
\begin{equation}\label{1.16}
\| (t + |x|) |w|^{\frac{2}{d - 1}} \|_{L^{\infty}} \lesssim 1,
\end{equation}
and therefore $(\ref{1.15})$ implies a bound on the integral of $\frac{\mathcal E(t)}{t^{2}}$, where $\mathcal E(t)$ is the conformal energy.

For initial data in $\dot{H}^{1/2} \times \dot{H}^{-1/2}$, we still have the bound
\begin{equation}\label{1.17}
\| |x| |w|^{\frac{2}{d - 1}} \|_{L^{\infty}} \lesssim 1,
\end{equation}
from the radial Sobolev embedding theorem,
along with the bound
\begin{equation}\label{1.17.1}
\| t |w|^{\frac{2}{d - 1}} \|_{L^{\infty}(|x| \geq \delta |t|)} \lesssim \frac{1}{\delta}.
\end{equation}
\begin{remark}
For the discussion in this section, it is reasonable to ignore the logarithmic divergence of the radial Sobolev embedding that arises from the Littlewood--Paley projection in $L^{\infty}$.
\end{remark}

On the other hand, for general initial data in $\dot{H}^{1/2} \times \dot{H}^{-1/2}$, there is no reason to think that the dispersive estimate $\| t |w|^{\frac{2}{d - 1}} \|_{L^{\infty}}$ will hold, since the $\dot{H}^{1/2} \times \dot{H}^{-1/2}$ norm is invariant under the operator
\begin{equation}\label{1.18}
\begin{pmatrix}
\cos(t \sqrt{-\Delta}) & \frac{\sin(t \sqrt{-\Delta})}{\sqrt{-\Delta}} \\ -\sqrt{-\Delta} \sin(t \sqrt{-\Delta}) & \cos(t \sqrt{-\Delta})
\end{pmatrix}.
\end{equation}
Instead, we use that the square $L^{2}$ norm of $\nabla_{t,x} v$ is bounded by the conformal energy divided by $\frac{1}{t^{2}}$, which gives us good decay to cancel out the contribution of $t |w|^{\frac{4}{d - 1}} |v|$. We use the Morawetz estimate and the local energy decay to do this, which gives a bound on the scattering size.\medskip

In section two, we recall some Strichartz estimates and the small data result of \cite{lindblad1995existence}. In section three we recall the scattering result of \cite{strauss1981nonlinear}. In section four, we prove scattering in the $d = 4$ case. In section five, we prove a modified small data result in dimensions $d > 5$. In section six, we prove scattering in the $d > 4$ case. Finally, in section seven we complete the proof of Theorem $\ref{t1.1}$ using the profile decomposition.

\section{Strichartz estimates and small data results}
Global well-posedness and scattering for $(\ref{1.1})$ with small initial data is a direct result of Strichartz estimates.
\begin{theorem}[Strichartz estimates]\label{t4.1}
Let $I$ be a time interval and let $u : I \times \mathbb{R}^{d} \rightarrow \mathbb{C}$ be a Schwartz solution to the wave equation,
\begin{equation}\label{4.1}
u_{tt} - \Delta u + F = 0, \qquad u(t_{0}, \cdot) = u_{0}, \qquad u_{t}(t_{0}, \cdot) = u_{1}, \qquad \text{for some} \qquad t_{0} \in I.
\end{equation}
Then for $s \geq 0$, $2 \leq p, \tilde{p} \leq \infty$, $2 \leq q, \tilde{q} < \infty$ obeying the scaling conditions
\begin{equation}\label{4.2}
\frac{1}{p} + \frac{d}{q} = \frac{d}{2} - s = \frac{1}{\tilde{p}'} + \frac{d}{\tilde{q}'} - 2,
\end{equation}
and the wave admissibility conditions
\begin{equation}\label{4.3}
\frac{1}{p} + \frac{d - 1}{2q}, \frac{1}{\tilde{p}} + \frac{d - 1}{2 \tilde{r}} \qquad \leq \qquad \frac{d - 1}{4},
\end{equation}
\begin{equation}\label{4.4}
\| u \|_{L_{t}^{p} L_{x}^{q}(I \times \mathbb{R}^{d})} + \| u \|_{L_{t}^{\infty} \dot{H}_{x}^{s}(I \times \mathbb{R}^{d})} + \| \partial_{t} u \|_{L_{t}^{\infty} \dot{H}_{x}^{s - 1}(I \times \mathbb{R}^{d})} \lesssim_{s, p, q, \tilde{p}, \tilde{q}, s, d} \| u_{0} \|_{\dot{H}_{x}^{s}(\mathbb{R}^{d})} + \| u_{1} \|_{\dot{H}_{x}^{s - 1}(\mathbb{R}^{d})} + \| F \|_{L_{t}^{\tilde{p}'} L_{x}^{\tilde{q}'}(I \times \mathbb{R}^{d})}.
\end{equation}
\end{theorem}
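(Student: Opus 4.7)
The plan is to follow the standard proof of Strichartz estimates for the wave equation: reduce to a frequency-localized estimate via Littlewood--Paley, combine the dispersive decay with $L^{2}$ conservation by interpolation, and close through a $TT^{\ast}$ argument, with the Keel--Tao bilinear method handling the endpoint.

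First, I would decompose $u = \sum_{N} P_{N} u$ dyadically in frequency. Since $p, q \geq 2$, Minkowski's inequality together with the Littlewood--Paley square function estimate reduces the claim to a uniform bound at each fixed frequency $N$, and the scaling symmetry of the wave equation then allows one to further reduce to $N = 1$. At this fixed frequency the dispersive estimate
$$\| e^{\pm i t \sqrt{-\Delta}} P_{1} f \|_{L^{\infty}_{x}} \lesssim (1 + |t|)^{-(d-1)/2} \| f \|_{L^{1}_{x}},$$
combined with the trivial $L^{2}$ conservation, yields by complex interpolation
$$\| e^{\pm i t \sqrt{-\Delta}} P_{1} f \|_{L^{q}_{x}} \lesssim (1 + |t|)^{-(d-1)(\frac{1}{2} - \frac{1}{q})} \| f \|_{L^{q'}_{x}}, \qquad 2 \leq q \leq \infty.$$

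Next, writing the solution via Duhamel's formula, I would treat the homogeneous piece by a $TT^{\ast}$ argument which reduces the desired $L^{p}_{t} L^{q}_{x}$ bound to a bilinear estimate on $\iint \langle e^{i(t-s)\sqrt{-\Delta}} P_{1} F(s), G(t) \rangle \, ds \, dt$. This bilinear inequality follows from Hardy--Littlewood--Sobolev applied to the dispersive decay, precisely in the range where the non-endpoint wave admissibility $\frac{1}{p} + \frac{d-1}{2q} < \frac{d-1}{4}$ and the scaling relation \eqref{4.2} both hold. The inhomogeneous estimate with two independent admissible pairs $(p,q)$ and $(\tilde p, \tilde q)$ is then a direct consequence of the same framework combined with the Christ--Kiselev lemma, which removes the time-ordering constraint from the Duhamel integral.

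The main obstacle is the endpoint case where the wave admissibility is saturated, for example $p = 2$ with $q = \frac{2(d-1)}{d-3}$ in dimensions $d \geq 4$. Here Christ--Kiselev fails and I would instead adapt the Keel--Tao bilinear method: decompose $|t-s|$ dyadically, bound each bilinear piece by interpolating between the dispersive estimate and $L^{2}$ conservation, and sum using a real-interpolation / atomic-space argument to close the endpoint. Once the frequency-localized Strichartz estimate is established, the scaling relation $\frac{1}{p} + \frac{d}{q} = \frac{d}{2} - s$ together with a final Littlewood--Paley square-function computation transfers the bound to the $\dot{H}^{s}$ level claimed in \eqref{4.4}.
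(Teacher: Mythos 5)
Your proposal is correct in outline, but it is worth noting that the paper does not actually prove this theorem: the stated ``proof'' simply records that the statement is quoted from Tao's book and defers to the standard references (Kato, Ginibre--Velo, Kapitanski, Lindblad--Sogge, Shatah--Struwe, and Keel--Tao). What you have written is essentially a compressed version of the argument contained in those references: Littlewood--Paley reduction to a single frequency (legitimate here since $p,q \geq 2$ and $q < \infty$), the unit-frequency dispersive estimate interpolated against $L^{2}$ conservation, a $TT^{\ast}$ plus Hardy--Littlewood--Sobolev argument in the non-sharp admissible range, the Christ--Kiselev lemma to pass to the retarded Duhamel operator with two independent exponent pairs (which requires $\tilde{p}' < p$ and hence fails only at the double endpoint), and the Keel--Tao bilinear real-interpolation machinery at the endpoint $p = 2$, $q = \frac{2(d-1)}{d-3}$, followed by scaling and a square-function resummation to reach the $\dot{H}^{s}$-level statement \eqref{4.4}. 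So your route is the standard one and is consistent with what the paper implicitly relies on; the only caveat is that, as a proposal, the endpoint step is the genuinely nontrivial part (the dyadic-in-$|t-s|$ decomposition and the summation via real interpolation of the resulting bilinear estimates), and a complete write-up would have to reproduce that analysis from Keel--Tao rather than treat it as routine.
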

\begin{proof}
This theorem was copied from \cite{tao2006nonlinear}. See \cite{kato1994lq}, \cite{ginibre1995generalized}, \cite{kapitanski1989some}, \cite{lindblad1995existence}, \cite{sogge1995lectures}, \cite{shatah1993regularity}, and \cite{keel1998endpoint} for references.
\end{proof}

Of particular importance to this paper is the conformal Strichartz estimate,
\begin{equation}\label{4.5}
\| u \|_{L_{t,x}^{\frac{2(d + 1)}{d - 1}}(\mathbb{R} \times \mathbb{R}^{d})} \lesssim_{d} \| u(0) \|_{\dot{H}_{x}^{1/2}(\mathbb{R}^{d})} + \| u_{t}(0) \|_{\dot{H}_{x}^{-1/2}(\mathbb{R}^{d})} + \| F \|_{L_{t, x}^{\frac{2(d + 1)}{d + 3}}(\mathbb{R} \times \mathbb{R}^{d})},
\end{equation}
which was proved in the original paper \cite{strichartz1977restrictions}. A straightforward application of $(\ref{4.5})$ gives global well-posedness and scattering for $(\ref{1.1})$ with small initial data, for both radially symmetric initial data and general initial data, see \cite{lindblad1995existence}.

\begin{theorem}\label{t4.2}
For any $d > 3$, there exists some $\epsilon_{0}(d) > 0$ such that if
\begin{equation}\label{4.6}
\| u(0, \cdot) \|_{\dot{H}^{1/2}(\mathbb{R}^{d})} + \| u_{t}(0, \cdot) \|_{\dot{H}^{-1/2}(\mathbb{R}^{d})} \leq \epsilon_{0}(d),
\end{equation}
then $(\ref{4.1})$ is globally well-posed and the solution satisfies
\begin{equation}\label{4.7}
\| u \|_{L_{t,x}^{\frac{2(d + 1)}{d - 1}}(\mathbb{R} \times \mathbb{R}^{d})} \lesssim \| u(0, \cdot) \|_{\dot{H}^{1/2}(\mathbb{R}^{d})} + \| u_{t}(0, \cdot) \|_{\dot{H}^{-1/2}(\mathbb{R}^{d})}.
\end{equation}
Moreover, if $u$ solves $(\ref{1.1})$, $\| u \|_{L_{t,x}^{\frac{2(d + 1)}{d - 1}}(\mathbb{R} \times \mathbb{R}^{d})} < \infty$ is equivalent to scattering to a free solution both forward and backward in time.
\end{theorem}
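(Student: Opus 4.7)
The plan is to apply the Banach fixed-point theorem to the Duhamel formulation (1.3.1) using the conformal Strichartz estimate (4.5). The decisive observation is that the nonlinearity $|u|^{\frac{4}{d-1}}u$ is exactly scaling-matched to this estimate: since $\frac{2(d+1)}{d+3}\cdot\frac{d+3}{d-1}=\frac{2(d+1)}{d-1}$,
\begin{equation*}
\big\||u|^{\frac{4}{d-1}}u\big\|_{L^{\frac{2(d+1)}{d+3}}_{t,x}(\mathbb{R}\times\mathbb{R}^d)} = \|u\|_{L^{\frac{2(d+1)}{d-1}}_{t,x}(\mathbb{R}\times\mathbb{R}^d)}^{\frac{d+3}{d-1}}.
\end{equation*}
Setting $q=\frac{2(d+1)}{d-1}$, $D=\|u_0\|_{\dot{H}^{1/2}}+\|u_1\|_{\dot{H}^{-1/2}}$, and letting $\Phi(u)$ denote the right-hand side of (1.3.1), (4.5) applied to $\Phi(u)$ yields
\begin{equation*}
\|\Phi(u)\|_{L^q_{t,x}(\mathbb{R}\times\mathbb{R}^d)} \leq C\bigl(D+\|u\|_{L^q_{t,x}}^{(d+3)/(d-1)}\bigr).
\end{equation*}
Choosing $R=2CD$ and $\epsilon_0(d)$ so small that $CR^{(d+3)/(d-1)}\leq CD$, the closed $L^q_{t,x}$-ball of radius $R$ is mapped into itself.

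For the contraction property, I would use the pointwise inequality
\begin{equation*}
\bigl||u|^{\frac{4}{d-1}}u-|v|^{\frac{4}{d-1}}v\bigr|\lesssim \bigl(|u|^{\frac{4}{d-1}}+|v|^{\frac{4}{d-1}}\bigr)|u-v|,
\end{equation*}
which remains valid for every $d\geq 4$ even though the nonlinearity is only H\"older continuous when $d\geq 6$. Combining this with H\"older and (4.5) produces
\begin{equation*}
\|\Phi(u)-\Phi(v)\|_{L^q_{t,x}} \leq C\bigl(\|u\|_{L^q_{t,x}}^{\frac{4}{d-1}}+\|v\|_{L^q_{t,x}}^{\frac{4}{d-1}}\bigr)\|u-v\|_{L^q_{t,x}},
\end{equation*}
whose prefactor on the ball of radius $R=2CD$ is bounded by $4CR^{4/(d-1)}<\tfrac12$ after possibly shrinking $\epsilon_0$. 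The fixed point theorem then produces the unique solution $u$; the spacetime bound (4.7) is the closed-ball estimate, and the energy-type norms in (1.8) follow by plugging $F=|u|^{\frac{4}{d-1}}u$ back into (4.4) with other admissible exponents.

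For the scattering claims and the equivalence in the final sentence, the dual Strichartz inequality combined with the nonlinear estimate gives
\begin{equation*}
\Big\|\int_T^{T'}\frac{\sin((\cdot-\tau)\sqrt{-\Delta})}{\sqrt{-\Delta}}|u(\tau)|^{\frac{4}{d-1}}u(\tau)\,d\tau\Big\|_{\dot{H}^{1/2}} \lesssim \|u\|_{L^q_{t,x}([T,T'])}^{(d+3)/(d-1)},
\end{equation*}
together with the analogous $\dot{H}^{-1/2}$ bound on the time derivative. Whenever $\|u\|_{L^q_{t,x}(\mathbb{R})}<\infty$, this vanishes as $T,T'\to\pm\infty$, so absorbing the Duhamel integrals over $[0,\pm\infty)$ into $(u_0,u_1)$ defines $(u_0^\pm,u_1^\pm)\in \dot{H}^{1/2}\times\dot{H}^{-1/2}$ satisfying (1.3.2)--(1.3.4). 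Conversely, if $u$ scatters, Strichartz for the free profiles $u^\pm$ gives $\|u^\pm\|_{L^q_{t,x}}<\infty$, and for $|T|$ large the free evolution of $(u(T),u_t(T))$ has arbitrarily small $L^q_{t,x}$ norm on $[T,\pm\infty)$, so the fixed-point estimate above produces a small-norm solution there which by uniqueness coincides with $u$; combining with the local-well-posedness statement of Theorem 1.2 on the compact middle interval yields $\|u\|_{L^q_{t,x}(\mathbb{R})}<\infty$. The only genuinely technical point is tracking the one-sided H\"older bound on the nonlinearity in high dimensions; beyond this, the argument is routine fixed-point machinery.
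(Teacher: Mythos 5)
Your proposal is correct and follows essentially the same route as the paper: a contraction-mapping/Picard iteration in $L_{t,x}^{\frac{2(d+1)}{d-1}}$ built on the conformal Strichartz estimate $(\ref{4.5})$, with the scaling-matched nonlinear estimate and the standard difference bound giving the closed-ball and contraction steps. Your treatment of the scattering equivalence via the Cauchy criterion and small-interval/uniqueness argument matches what the paper does at the start of Section 3.
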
 
\begin{remark}
This theorem is proved in many places and in far more generality, see for example \cite{tao2006nonlinear}. Still, for the large data result in dimensions $d \geq 5$, it will be useful to prove a slight modification of the small data result. Because of this, it is instructive to give a short proof of the small data result here.
\end{remark}
\begin{proof}
The theorem is proved using Picard iteration. Define the sequence
\begin{equation}\label{4.8}
u^{(0)}(t) = \cos(t \sqrt{-\Delta}) u_{0} + \frac{\sin(t \sqrt{-\Delta})}{\sqrt{-\Delta}} u_{1},
\end{equation}
and for $n \geq 1$,
\begin{equation}\label{4.9}
u^{(n)}(t) = u^{(0)}(t) - \int_{0}^{t} \frac{\sin((t - \tau) \sqrt{-\Delta})}{\sqrt{-\Delta}} |u^{(n - 1)}(\tau)|^{\frac{4}{d - 1}} u^{(n - 1)}(\tau) d\tau.
\end{equation}
Then, by $(\ref{4.5})$,
\begin{equation}\label{4.10}
\| u^{(n)}(t) \|_{L_{t,x}^{\frac{2(d + 1)}{d - 1}}(\mathbb{R} \times \mathbb{R}^{d})} \lesssim \| u_{0} \|_{\dot{H}^{1/2}} + \| u_{1} \|_{\dot{H}^{-1/2}} + \| u^{(n - 1)} \|_{L_{t,x}^{\frac{2(d + 1)}{d - 1}}(\mathbb{R} \times \mathbb{R}^{d})}^{1 + \frac{4}{d - 1}}.
\end{equation}
Therefore, for $\epsilon_{0}(d) > 0$ sufficiently small and some constant $C(d)$ sufficiently large,
\begin{equation}\label{4.11}
\| u^{(n - 1)} \|_{L_{t,x}^{\frac{2(d + 1)}{d - 1}}(\mathbb{R} \times \mathbb{R}^{d})} \leq C(d) \epsilon_{0} \hspace{5mm} \Rightarrow \hspace{5mm} \| u^{(n)} \|_{L_{t,x}^{\frac{2(d + 1)}{d - 1}}(\mathbb{R} \times \mathbb{R}^{d})} \leq C(d) \epsilon_{0},
\end{equation}
and therefore, by induction,
\begin{equation}\label{4.11}
\| u^{(n)} \|_{L_{t,x}^{\frac{2(d + 1)}{d - 1}}(\mathbb{R} \times \mathbb{R}^{d})} \leq C(d) \epsilon_{0}, \qquad \forall n.
\end{equation}
Also, by $(\ref{4.5})$ and $(\ref{4.11})$,
\begin{equation}\label{4.12}
\| u^{(n)} - u^{(n - 1)} \|_{L_{t, x}^{\frac{2(d + 1)}{d - 1}}(\mathbb{R} \times \mathbb{R}^{d})} \lesssim [C(d) \epsilon_{0}]^{\frac{4}{d - 1}} \| u^{(n - 1)} - u^{(n - 2)} \|_{L_{t, x}^{\frac{2(d + 1)}{d - 1}}(\mathbb{R} \times \mathbb{R}^{d})},
\end{equation}
which by the contraction mapping theorem proves that $u^{(n)}(t)$ converges in $L_{t,x}^{\frac{2(d + 1)}{d - 1}}(\mathbb{R} \times \mathbb{R}^{d})$ to a unique solution.
\end{proof}

While global well-posedness and scattering hold for small nonradial data, the proof in this paper of global well-posedness and scattering for $(\ref{1.1})$ with large initial data relies heavily on radial symmetry. In particular, the proof relies heavily on the radial Strichartz estimate of \cite{sterbenz2005angular}. See also \cite{rodnianskisterbenz}.
\begin{theorem}[Strichartz estimates for radially symmetric initial data]\label{t4.3}
Let $u$ be a radially symmetric function on $\mathbb{R}^{d + 1}$ such that $u_{tt} - \Delta u = 0$. Then, the following estimates hold,
\begin{equation}\label{4.13}
\| u \|_{L_{t}^{p} L_{x}^{q}(\mathbb{R} \times \mathbb{R}^{d})} \lesssim \| u(0) \|_{\dot{H}^{\gamma}} + \| u_{t}(0) \|_{\dot{H}^{\gamma - 1}},
\end{equation}
where 
\begin{equation}\label{4.14}
\frac{1}{p} + \frac{d - 1}{q} < \frac{d - 1}{2}, \qquad \text{and} \qquad \frac{1}{p} + \frac{d}{q} = \frac{d}{2} - \gamma.
\end{equation}
\end{theorem}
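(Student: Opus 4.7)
The plan is to reduce Theorem \ref{t4.3} to a frequency-localized estimate via a Littlewood--Paley and scaling argument, and then prove the localized estimate using a $TT^{*}$ argument combined with an improved radial dispersive estimate. More precisely, let $P_{N}$ denote the usual Littlewood--Paley projection onto frequencies $|\xi| \sim N$. Since $u_{0}, u_{1}$ are radial and the wave propagator preserves radial symmetry, each piece $P_{N} u$ is radial. Our target estimate has $q \geq 2$, so by Minkowski and the Littlewood--Paley square function inequality it suffices to prove the frequency-localized version
\begin{equation*}
\| P_{N} u \|_{L_{t}^{p} L_{x}^{q}(\mathbb{R} \times \mathbb{R}^{d})} \lesssim N^{\gamma} \| P_{N} u(0) \|_{L^{2}} + N^{\gamma - 1} \| P_{N} u_{t}(0) \|_{L^{2}},
\end{equation*}
uniformly in $N$, and then sum in $N$ as an $\ell^{2}$ sum (exchanging $\ell^{2}$ and $L_{t}^{p}L_{x}^{q}$ using $p,q \geq 2$). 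A parabolic scaling $u(t,x) \mapsto u(Nt, Nx)$ then reduces everything to the $N = 1$ case.

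For the frequency-1 estimate, I would decompose $e^{\pm it\sqrt{-\Delta}}$ as the half-wave propagators and apply the $TT^{*}$ principle: the estimate
\begin{equation*}
\| e^{it\sqrt{-\Delta}} P_{1} f \|_{L_{t}^{p} L_{x}^{q}} \lesssim \| f \|_{L^{2}}
\end{equation*}
is equivalent by duality and composition to the bilinear bound
\begin{equation*}
\Bigl| \int\!\!\int \langle e^{i(t-s)\sqrt{-\Delta}} P_{1} F(s,\cdot), G(t,\cdot) \rangle \, ds\, dt \Bigr| \lesssim \| F \|_{L_{t}^{p'} L_{x}^{q'}} \| G \|_{L_{t}^{p'} L_{x}^{q'}}.
\end{equation*}
The kernel of $e^{i(t-s)\sqrt{-\Delta}} P_{1}$, restricted to radial functions, can be computed via Bessel functions and satisfies an improved pointwise bound: for radial inputs, one gains an extra factor of $\langle |x|\rangle^{-(d-1)/2}\langle |y|\rangle^{-(d-1)/2}$ relative to the standard kernel estimate, reflecting the radial Sobolev embedding $\| |x|^{(d-1)/2} f \|_{L_{x}^{\infty}} \lesssim \| f \|_{\dot{H}^{1/2}}$ together with frequency localization. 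Equivalently, one may substitute $v = r^{(d-1)/2} u$, which (away from the origin) satisfies a one-dimensional wave equation with a long-range potential $\frac{(d-1)(d-3)}{4r^{2}}$, so that frequency-$1$ radial solutions in $\mathbb{R}^{d}$ are controlled by $1$-dimensional wave Strichartz up to the contribution of the region $r \lesssim 1$.

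With this improved kernel decay in hand, applying Hardy--Littlewood--Sobolev in time and Schur's test or Young's inequality in space converts the $TT^{*}$ bilinear integral into a Strichartz estimate whose admissible range is governed by $\frac{1}{p} + \frac{d-1}{q} < \frac{d-1}{2}$ rather than the more restrictive non-radial condition $\frac{1}{p} + \frac{d-1}{2q} \leq \frac{d-1}{4}$. The scaling relation $\frac{1}{p} + \frac{d}{q} = \frac{d}{2} - \gamma$ pins down the correct $\gamma$, and the strict inequality in the admissibility condition is what guarantees convergence of the $t$-integral in Hardy--Littlewood--Sobolev (the endpoint fails exactly because this gain is logarithmic there).

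The main obstacle is handling the region $r \lesssim 1$, where the radial Sobolev gain $r^{-(d-1)/2}$ is not available and the potential in the $v$-equation is singular. I would treat this region separately using the local $L^{\infty}$ bound that comes from Bernstein's inequality, $\| P_{1} u \|_{L_{x}^{\infty}(|x| \lesssim 1)} \lesssim \| P_{1} u \|_{L_{x}^{2}}$, combined with the trivial $L^{p}_{t}L^{\infty}_{x}$ bound for $p > 2$ that follows from the standard Strichartz estimate \ref{t4.1} (for radial data the relevant admissibility holds because $q = \infty$ is allowed once one has the radial gain, or by interpolation with energy). Patching the near-origin and far-from-origin regimes, and checking that the resulting loss is absorbed by the strict inequality in \eqref{4.14}, completes the argument.
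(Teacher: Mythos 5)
The paper does not actually prove Theorem \ref{t4.3}: it is quoted as a black box from \cite{sterbenz2005angular} (see also \cite{rodnianskisterbenz}), so there is no in-paper argument to compare yours against; you are attempting to reprove a cited result. Your outer reduction is fine: Littlewood--Paley square function plus Minkowski (legitimate since $p, q \geq 2$, with the usual caveat at $q = \infty$), rescaling to unit frequency, and a $TT^{*}$ reformulation are all standard and correct.

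The genuine gap is in the core of the argument. The pointwise kernel bound you assert --- the standard frequency-localized dispersive estimate multiplied by an extra factor $\langle |x| \rangle^{-(d-1)/2} \langle |y| \rangle^{-(d-1)/2}$ for radial inputs --- is false. The correct statement (which your own $v = r^{(d-1)/2} u$ heuristic encodes) is that the radial frequency-one kernel is of size $(r\rho)^{-(d-1)/2}$ times a one-dimensional kernel concentrated where $|t-s| \approx |r \pm \rho|$, with \emph{no} additional uniform decay in $t - s$: for instance, near the cone with $r \approx \rho \approx |t-s|/2$ the kernel has size $\sim |t-s|^{-(d-1)}$, which is much larger than your claimed $|t-s|^{-(d-1)/2} (r\rho)^{-(d-1)/2} \sim |t-s|^{-3(d-1)/2}$. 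Because the decay lives on the light cone rather than being of product or convolution type with genuine time decay, the subsequent step ``Hardy--Littlewood--Sobolev in time plus Schur or Young in space'' does not convert the $TT^{*}$ form into the stated range; getting exactly $\frac{1}{p} + \frac{d-1}{q} < \frac{d-1}{2}$ requires the real work --- a dyadic decomposition in $r$, $\rho$, and $|t-s|$ with geometric summation powered by the strict inequality, or the weighted-$L^{2}$/trace-type machinery used by Sterbenz --- and in your sketch that step is asserted rather than carried out, even though it is precisely where the theorem lives. A lesser issue: the near-origin patching via nonradial Strichartz plus Bernstein works for $d \geq 4$ and $p \geq 2$, but fails at $p = 2$ when $d = 3$ (which Lemma \ref{l4.4}, stated for $d \geq 3$, would need); there you would have to substitute a local energy decay/local smoothing bound or the known radial endpoint estimate.
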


Observe that after doing some algebra with $(\ref{4.14})$, if $p = 2$,
\begin{equation}\label{4.15}
\frac{1}{q} < \frac{1}{2} \cdot \frac{d - 2}{d - 1}.
\end{equation}
Note that one particular case of $(\ref{4.13})$ is
\begin{equation}\label{4.16}
\| u \|_{L_{t}^{2} L_{x}^{\frac{2d}{d - 2}}(\mathbb{R} \times \mathbb{R}^{d})} \lesssim \| u(0) \|_{\dot{H}^{1/2}} + \| u_{t}(0) \|_{\dot{H}^{-1/2}}.
\end{equation}
Moreover, we combining Theorem $\ref{t4.3}$ with the radial Sobolev embedding theorem implies
\begin{lemma}\label{l4.4}
For any $0 < \theta \leq 1$, $d \geq 3$,
\begin{equation}\label{4.17}
\| |x|^{\frac{d - 2}{2}(1 - \theta)}u \|_{L_{t}^{2} L_{x}^{\frac{2d}{(d - 2) \theta}}(\mathbb{R} \times \mathbb{R}^{d})} \lesssim_{d, \theta} \| u(0) \|_{\dot{H}^{1/2}} + \| u_{t}(0) \|_{\dot{H}^{-1/2}}.
\end{equation}
\end{lemma}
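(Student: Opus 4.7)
The plan is to apply the radial Strichartz estimate of Theorem \ref{t4.3} at a Sobolev index higher than $\tfrac{1}{2}$, and then convert the additional fractional derivatives on the initial data into the spatial weight $|x|^{(d-2)(1-\theta)/2}$ via a radial Hardy-type inequality.

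Concretely, I would apply Theorem \ref{t4.3} with $p = 2$ and $q = \frac{2d}{(d-2)\theta}$. The scaling relation in $(\ref{4.14})$ then forces $\gamma = \frac{1}{2} + \frac{(d-2)(1-\theta)}{2}$, and the admissibility condition $\frac{1}{p} + \frac{d-1}{q} < \frac{d-1}{2}$ reduces to $\theta < \frac{d}{d-1}$, which holds for every $\theta \in (0, 1]$ and $d \geq 3$. This produces
\begin{equation*}
\|u\|_{L^2_t L^{2d/((d-2)\theta)}_x(\mathbb{R} \times \mathbb{R}^d)} \lesssim_{d,\theta} \|u_0\|_{\dot H^\gamma} + \|u_1\|_{\dot H^{\gamma-1}}.
\end{equation*}
Setting $\beta := \gamma - \tfrac{1}{2} = \frac{(d-2)(1-\theta)}{2}$ and $\tilde u := |D|^{-\beta} u$, the function $\tilde u$ solves the linear wave equation with initial data $(|D|^{-\beta} u_0, |D|^{-\beta} u_1)$ lying in $\dot H^\gamma \times \dot H^{\gamma-1}$, with norms equal to $\|u_0\|_{\dot H^{1/2}}$ and $\|u_1\|_{\dot H^{-1/2}}$ respectively. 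Applying the preceding estimate to $\tilde u$ yields
\begin{equation*}
\|\tilde u\|_{L^2_t L^{2d/((d-2)\theta)}_x} \lesssim_{d,\theta} \|u_0\|_{\dot H^{1/2}} + \|u_1\|_{\dot H^{-1/2}}.
\end{equation*}

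Since $u = |D|^\beta \tilde u$, the lemma now reduces, at each fixed time, to the operator bound
\begin{equation*}
\||x|^\beta |D|^\beta g\|_{L^{2d/((d-2)\theta)}_x(\mathbb{R}^d)} \lesssim_{d,\theta} \|g\|_{L^{2d/((d-2)\theta)}_x(\mathbb{R}^d)}
\end{equation*}
for radial $g$. This is scale invariant (both sides have the same homogeneity under spatial dilations), and on radial functions it follows from the Stein--Weiss weighted Hardy--Littlewood--Sobolev inequality: the Riesz potential $|D|^{-\beta}$ maps $L^q$ continuously into the weighted space $|x|^\beta L^q$, and dualization yields the desired pointwise-in-time estimate. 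One verifies that the Stein--Weiss admissibility conditions, which amount to $0 \leq \beta < d/q'$, reduce here to $\theta \in (0, 1]$ and $d \geq 3$, matching the hypothesis of the lemma.

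The main technical obstacle is the Stein--Weiss-type operator bound on radial functions in the final step; the remainder of the argument is a direct application of Theorem \ref{t4.3}, together with the fact that $|D|^{-\beta}$ commutes with the linear wave flow.
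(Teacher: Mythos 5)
Your use of Theorem \ref{t4.3} is fine: with $p = 2$, $q = \frac{2d}{(d-2)\theta}$ the admissibility condition indeed reduces to $\theta < \frac{d}{d-1}$, the scaling forces $\gamma = \frac{1}{2} + \beta$ with $\beta = \frac{(d-2)(1-\theta)}{2}$, and applying the estimate to $\tilde u = |D|^{-\beta}u$ (which commutes with the free flow) gives $\|\tilde u\|_{L_t^2 L_x^{q}} \lesssim \|u_0\|_{\dot H^{1/2}} + \|u_1\|_{\dot H^{-1/2}}$. The gap is the final fixed-time reduction: the operator bound $\| |x|^{\beta}|D|^{\beta} g\|_{L^q} \lesssim \|g\|_{L^q}$ is false, even for radial $g$, and it is not what Stein--Weiss provides. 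Stein--Weiss (equivalently the fractional Hardy inequality) bounds $\| |x|^{-\beta}|D|^{-\beta} f\|_{L^q} \lesssim \|f\|_{L^q}$ --- a \emph{decaying} weight composed with a \emph{smoothing} Riesz potential --- and even that only under $\beta < d/q$, which here reads $\theta > 1/2$ (your condition $\beta < d/q'$ is not the relevant one). Its adjoint is $|D|^{-\beta} M_{|x|^{-\beta}}$ on $L^{q'}$; no dualization produces $M_{|x|^{\beta}}|D|^{+\beta}$, which differentiates rather than smooths. Concretely, take the radial functions $g_N(x) = \phi(|x|)\sin(N|x|)$ with $\phi$ a fixed bump supported in $1 \le |x| \le 2$: then $\|g_N\|_{L^q} \sim 1$, the frequency support of $g_N$ lies near $|\xi| \sim N$ so $\| |D|^{\beta} g_N\|_{L^q} \sim N^{\beta}$, and $|x|^{\beta} \sim 1$ on the support, whence $\| |x|^{\beta}|D|^{\beta} g_N\|_{L^q} \gtrsim N^{\beta} \to \infty$. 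Equivalently, the inequality you actually need, $\| |x|^{\beta} u\|_{L^q} \lesssim \| |D|^{-\beta} u\|_{L^q}$, fails for $u = g_N$ (left side $\sim 1$, right side $\sim N^{-\beta}$): a weight that only grows in $|x|$ cannot pay for derivatives on a fixed annulus, which is why a same-exponent weight-for-derivative trade cannot work.

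This is exactly the point where the paper's argument proceeds differently: there the radial Strichartz estimate is used at a single pair $(2,q)$ with $\frac1q$ just below $\frac{d-2}{2(d-1)}$, the derivatives $|\nabla|^{\gamma}$ are converted into the weight $|x|^{\frac{d-1}{2}\gamma}$ by the \emph{radial Sobolev embedding}, which necessarily changes the Lebesgue exponent at the same time, and the full range $0 < \theta \le 1$ in $(\ref{4.17})$ is then obtained by interpolating that weighted estimate with the unweighted endpoint $(\ref{4.16})$. If you wish to salvage your scheme, replace your fixed-time operator bound by the radial Sobolev embedding (accepting the change of exponent) and interpolate; as written, the last step of your proposal is not correct.
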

\begin{proof}
Choose some $q$ very close to $\frac{1}{2} \cdot \frac{d - 2}{d - 1}$. For $\gamma = \frac{d - 1}{2} - \frac{d}{q}$, Theorem $\ref{t4.3}$ implies that
\begin{equation}\label{4.18}
\| |\nabla|^{\gamma} u \|_{L_{t}^{2} L_{x}^{q}(\mathbb{R} \times \mathbb{R}^{d})} \lesssim \| u(0) \|_{\dot{H}^{1/2}} + \| u_{t}(0) \|_{\dot{H}^{-1/2}}.
\end{equation}
Then, by the radial Sobolev embedding theorem,
\begin{equation}\label{4.19}
\| |x|^{\frac{d - 1}{2} \gamma} u \|_{L_{t}^{2} L_{x}^{r}(\mathbb{R} \times \mathbb{R}^{d})} \lesssim \| u(0) \|_{\dot{H}^{1/2}} + \| u_{t}(0) \|_{\dot{H}^{-1/2}}, \qquad \frac{1}{r} = \frac{1}{q} - \gamma.
\end{equation}
Interpolating $(\ref{4.16})$ and $(\ref{4.19})$ gives $(\ref{4.17})$.
\end{proof}

The Christ--Kiselev lemma implies that Theorem $\ref{t4.3}$ and Lemma $\ref{l4.4}$ also hold for a small data solution to $(\ref{1.1})$.

\begin{lemma}[Christ--Kiselev lemma]\label{l4.5}
Let $X, Y$ be Banach spaces, let $I$ be a time interval, and let $K \in C^{0}(I \times I \rightarrow B(X \rightarrow Y))$ be a kernel taking values in the space of bounded operators from $X$ to $Y$. If $1 \leq p < q \leq \infty$ is such that
\begin{equation}\label{4.20}
\| \int_{I} K(t, s) f(s) ds \|_{L_{t}^{q}(I \rightarrow Y)} \leq A \| f \|_{L_{t}^{p}(I \rightarrow X)},
\end{equation}
for all $f \in L_{t}^{p}(I \rightarrow X)$ and some $A > 0$, then we also have
\begin{equation}\label{4.21}
\| \int_{s \in I : s < t} K(t, s) f(s) ds \|_{L_{t}^{q}(I \rightarrow Y)} \lesssim_{p, q} A \| f \|_{L_{t}^{p}(I \rightarrow X)}.
\end{equation}
\end{lemma}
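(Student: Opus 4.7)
The plan is to derive the retarded estimate from the unrestricted estimate $(\ref{4.20})$ via a dyadic decomposition of the causal region $\{(t,s) : s < t\}$, a standard Christ--Kiselev argument. This turns the retarded operator into a geometrically decaying sum of unrestricted operators acting on time-intervals with disjoint images, with the hypothesis $p < q$ entering only at the last step to force a geometric series to converge.

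First, by homogeneity I may assume $\| f \|_{L^{p}(I \to X)} = 1$. Introduce the nondecreasing distribution function
$$F : I \to [0,1], \qquad F(t) := \int_{s < t} \| f(s) \|_{X}^{p} \, ds,$$
and, for each dyadic subinterval $J \subset [0,1]$, set $I_{J} := F^{-1}(J)$, writing $J^{-}$ and $J^{+}$ for the left and right halves of $J$. The geometric heart of the argument is that for almost every pair $s < t$ in $I$ with $F(s) < F(t)$, there is a unique \emph{smallest} dyadic interval $J$ containing both $F(s)$ and $F(t)$; by minimality one must have $F(s) \in J^{-}$ and $F(t) \in J^{+}$. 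Hence, up to a null set,
$$\chi_{\{s < t\}}(s,t) = \sum_{J} \chi_{I_{J^{-}}}(s) \, \chi_{I_{J^{+}}}(t),$$
where the sum ranges over all dyadic $J \subset [0,1]$. Letting $T g(t) := \int_{I} K(t,s) g(s) \, ds$ denote the unrestricted operator from $(\ref{4.20})$, this decomposition factorizes the retarded operator as
$$\int_{s<t} K(t,s) f(s) \, ds = \sum_{J} \chi_{I_{J^{+}}}(t) \cdot T\bigl(\chi_{I_{J^{-}}} f\bigr)(t).$$

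I would then group the sum by scale. Let $\mathcal{D}_{n}$ denote the collection of $2^{n}$ dyadic intervals in $[0,1]$ of length $2^{-n}$. For each fixed $n$, the sets $\{I_{J^{+}}\}_{J \in \mathcal{D}_{n}}$ are pairwise disjoint in $t$, so disjoint-support orthogonality combined with $(\ref{4.20})$ yields
$$\Bigl\| \sum_{J \in \mathcal{D}_{n}} \chi_{I_{J^{+}}} T(\chi_{I_{J^{-}}} f) \Bigr\|_{L_{t}^{q}(I \to Y)}^{q} = \sum_{J \in \mathcal{D}_{n}} \| \chi_{I_{J^{+}}} T(\chi_{I_{J^{-}}} f) \|_{L_{t}^{q}(I \to Y)}^{q} \leq A^{q} \sum_{J \in \mathcal{D}_{n}} \| \chi_{I_{J^{-}}} f \|_{L_{t}^{p}(I \to X)}^{q}.$$
By construction of $F$ and the normalization, $\| \chi_{I_{J^{-}}} f \|_{L_{t}^{p}(I \to X)}^{p} = |J^{-}| = 2^{-n-1}$, and $|\mathcal{D}_{n}| = 2^{n}$, so the right-hand side equals $A^{q} \cdot 2^{-q/p} \cdot 2^{n(1 - q/p)}$. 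Taking $q$-th roots and summing the scales by Minkowski,
$$\Bigl\| \int_{s<t} K(\cdot, s) f(s) \, ds \Bigr\|_{L_{t}^{q}(I \to Y)} \leq A \cdot 2^{-1/p} \sum_{n \geq 0} 2^{n(1/q - 1/p)}.$$
Since $p < q$ forces $\tfrac{1}{q} - \tfrac{1}{p} < 0$, the geometric series converges to a constant $C(p,q)$, and undoing the normalization gives $(\ref{4.21})$.

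The main technical delicacy is the geometric decomposition in the second paragraph: one must verify that pairing $J^{-}$ with $J^{+}$ across all dyadic $J$ produces an essentially disjoint partition of $\{s < t\}$ rather than merely a covering, and must handle the measure-zero ambiguities where $F$ has flat pieces (intervals on which $f$ vanishes). Once that combinatorial setup is in place, the remainder is a routine scale-by-scale application of $(\ref{4.20})$ followed by a transparent geometric-series summation with ratio $2^{1/q - 1/p} < 1$.
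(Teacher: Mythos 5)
The paper does not actually prove this lemma: it is quoted from \cite{tao2006nonlinear} with the proof deferred to \cite{christ2001maximal} (see also \cite{smith2000global}, \cite{tao2000spherically}). Your argument is precisely the standard Christ--Kiselev proof as presented in those references: normalize $\| f \|_{L^{p}} = 1$, push time forward by the distribution function $F(t) = \int_{s<t} \| f(s) \|_{X}^{p}\, ds$, perform the Whitney-type dyadic decomposition of $\{ F(s) < F(t) \}$ into products $I_{J^{-}} \times I_{J^{+}}$, use disjointness of the sets $I_{J^{+}}$ at each scale together with $(\ref{4.20})$, and sum the resulting geometric series with ratio $2^{1/q - 1/p} < 1$. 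The computation $\| \chi_{I_{J^{-}}} f \|_{L^{p}}^{p} = |J^{-}|$ and the scale-by-scale bound are correct, and the two technical points you flag are exactly the right ones to check: pairs $s < t$ with $F(s) = F(t)$ contribute nothing because $f$ vanishes a.e.\ on such flat pieces, and the endpoint $q = \infty$ (permitted in the statement) is handled by replacing the $\ell^{q}$-summation over a fixed scale with a supremum, after which the geometric decay in $n$ still comes from $2^{-(n+1)/p}$ since $p < \infty$. So your proposal is correct and coincides with the cited proof rather than offering a different route.
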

\begin{proof}
This lemma was copied out of \cite{tao2006nonlinear}. This lemma was proved in \cite{christ2001maximal}. See also \cite{smith2000global} or \cite{tao2000spherically}.

\end{proof}

\section{Conformal energy and Morawetz estimates}
For large initial data, global well-posedness and scattering for $(\ref{1.1})$ is equivalent to proving that $(\ref{1.1})$ has a solution which satisfies
\begin{equation}\label{5.1}
\| u \|_{L_{t,x}^{\frac{2(d + 1)}{d - 1}}(\mathbb{R} \times \mathbb{R}^{d})} < \infty.
\end{equation}
Indeed, if $(\ref{5.1})$ holds, $\mathbb{R}$ can be partitioned into finitely many subintervals $I_{j}$ for which
\begin{equation}\label{5.2}
\| u \|_{L_{t,x}^{\frac{2(d + 1)}{d - 1}}(I_{j} \times \mathbb{R}^{d})} \ll 1.
\end{equation}
One can then use the Picard iteration argument from Theorem $\ref{t4.2}$ to prove global well-posedness and scattering.\medskip

On the other hand, if scattering is known to occur, then by $(\ref{4.5})$ and the Picard iteration argument from Theorem $\ref{t4.2}$, $(\ref{5.1})$ holds.\medskip

For large data, \cite{strauss1981nonlinear} and \cite{strauss1968decay} proved global well-posedness and scattering for $(\ref{1.1})$ with large initial data with sufficient regularity and decay.
\begin{theorem}\label{t5.1}
Suppose $u_{0}$ and $u_{1}$ are initial data that satisfy
\begin{equation}\label{5.3}
\| \langle x \rangle \nabla u_{0} \|_{L^{2}} + \| u_{0} \|_{L^{2}} + \| \langle x \rangle u_{1} \|_{L^{2}} + \| \langle x \rangle^{\frac{d - 1}{d + 1}} u_{0} \|_{L_{x}^{\frac{2(d + 1)}{d - 1}}} < \infty.
\end{equation}
Here, $\langle x \rangle = (1 + |x|^{2})^{1/2}$. Then the solution to $(\ref{1.1})$ is globally well-posed and scattering.
\end{theorem}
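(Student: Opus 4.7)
The strategy is to exploit the conservation of the conformal energy, which is available precisely because the nonlinearity $|u|^{4/(d-1)}u$ is conformally invariant. The hypotheses (5.3) are exactly those needed to make the conformal energy finite at $t=0$: the three quadratic quantities $\|\langle x\rangle\,\nabla u_{0}\|_{L^{2}}$, $\|u_{0}\|_{L^{2}}$, and $\|\langle x\rangle\,u_{1}\|_{L^{2}}$ control the parts of the conformal energy quadratic in $(\nabla u,\,u,\,u_{t})$ weighted by $\langle x\rangle^{2}$ (including the cross terms that appear after integration by parts), while the fourth quantity equals $\||x|^{(d-1)/(d+1)}u_{0}\|_{L^{2(d+1)/(d-1)}}^{2(d+1)/(d-1)}=\int |x|^{2}|u_{0}|^{2(d+1)/(d-1)}\,dx$, which is exactly the nonlinear potential contribution to $\mathcal{E}_{c}(0)$.

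Step one is global existence in $H^{1}$. The hypotheses imply $(u_{0},u_{1})\in H^{1}\times L^{2}$, so the classical energy (1.7) is finite; since the conformal power $1+4/(d-1)$ is energy-subcritical for $d\geq 4$, the local $\dot H^{1/2}$ solution of Theorem 1.2 propagates $H^{1}$ regularity and extends globally by conservation of (1.7). In particular, $\|u(t)\|_{L^{2(d+1)/(d-1)}_{x}}$ is uniformly bounded in time by the energy.

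Step two is the conformal multiplier identity. I would multiply the equation by $K_{0}u+(d-1)t\,u$ with $K_{0}=(t^{2}+|x|^{2})\partial_{t}+2t\,x\cdot\nabla$, and integrate in space. The direct computation produces the time derivative of a nonnegative quantity $\mathcal{E}_{c}(t)$ (the conformal energy, containing in particular the potential term $\tfrac{d-1}{2(d+1)}\int(t^{2}+|x|^{2})|u|^{2(d+1)/(d-1)}\,dx$) plus a bulk term proportional to $[(d-1)(p-1)-4]\int |u|^{p+1}\,dx$; at $p=1+4/(d-1)$ this factor vanishes identically, so $\mathcal{E}_{c}(t)\equiv\mathcal{E}_{c}(0)<\infty$. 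Discarding the other nonnegative pieces of the conformal energy and retaining only the potential term yields
\begin{equation*}
\int_{\mathbb{R}^{d}}|u(t,x)|^{\frac{2(d+1)}{d-1}}\,dx\;\lesssim\;\frac{\mathcal{E}_{c}(0)}{t^{2}},\qquad |t|\geq 1.
\end{equation*}
Integrating $1/t^{2}$ over $|t|\geq 1$ is finite, and the uniform energy bound handles $|t|\leq 1$; summing gives $\|u\|_{L^{2(d+1)/(d-1)}_{t,x}(\mathbb{R}\times\mathbb{R}^{d})}<\infty$, which by Theorem 4.2 is equivalent to scattering both forward and backward in time.

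The main obstacle is the conformal identity itself: carrying out the multiplier computation and verifying the algebraic cancellation of the bulk nonlinear term at exactly the conformal exponent (equivalently, $\beta=2(d+1)/(p+1)=d-1$), and then justifying the integration by parts at the level of an $H^{1}$ solution. The latter is standard via approximation of the initial data by Schwartz functions, for which all weighted integrals converge absolutely, followed by passage to the limit using the continuity statement of Theorem 1.2; the algebraic cancellation requires only patience. Once the identity $\mathcal{E}_{c}(t)=\mathcal{E}_{c}(0)$ is established, the extraction of the spacetime bound is immediate.
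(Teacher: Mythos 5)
Your proposal is correct and follows essentially the same route as the paper: the multiplier $K_{0}u+(d-1)tu$ is precisely the contraction of the stress--energy tensor used in the paper's computation of the conserved quantity $Q(t)$ in (\ref{5.7})--(\ref{5.14}), and the conclusion via $\int |u(t)|^{\frac{2(d+1)}{d-1}}dx\lesssim \mathcal E/t^{2}$ together with Theorem \ref{t4.2} matches (\ref{5.15}). The only cosmetic differences are that the paper translates the data to time $t=1$ rather than treating $|t|\leq 1$ by the energy bound, and states the cancellation at the conformal exponent directly rather than for general $p$.
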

\begin{proof}
The conformal energy,
\begin{equation}\label{5.4}
\aligned
\mathcal E(u) = \frac{1}{4} \int_{\mathbb{R}^{d}} |(t + |x|) Lu + (d - 1) u|^{2} + |(t - |x|) \underline{L} u + (d - 1) u|^{2} dx \\ + \frac{1}{2} \int (t^{2} + |x|^{2}) |\cancel{\nabla} u|^{2} dx + \frac{d - 1}{4(d + 1)} \int (t^{2} + |x|^{2}) |u|^{\frac{2(d + 1)}{d - 1}} dx,
\endaligned
\end{equation}
is a conserved quantity, where $L = (\partial_{t} + \frac{x}{|x|} \cdot \nabla)$ and $\underline{L} = (\partial_{t} - \frac{x}{|x|} \cdot \nabla)$.\medskip

Indeed, define the tensors
\begin{equation}\label{5.5}
\aligned
T^{00}(t,x) &= \frac{1}{2} |\partial_{t} u|^{2} + \frac{1}{2} |\nabla u|^{2} + \frac{d - 1}{2(d + 1)} |u|^{\frac{2(d + 1)}{d - 1}}, \\
T^{0j}(t,x) &= T^{j0}(t,x) = -(\partial_{t} u)(\partial_{x_{j}} u), \\
T^{jk}(t,x) &= (\partial_{x_{j}} u \partial_{x_{k}} u) - \frac{\delta_{jk}}{2}(|\nabla u|^{2} - |\partial_{t} u|^{2}) - \delta_{jk} \frac{d - 1}{2(d + 1)} |u|^{\frac{2(d + 1)}{d - 1}}.
\endaligned
\end{equation}
The tensor functions satisfy the differential equations
\begin{equation}\label{5.6}
\partial_{t} T^{00}(t,x) + \partial_{x_{j}} T^{0j}(t,x) = 0, \qquad \partial_{t} T^{0j}(t,x) + \partial_{x_{k}} T^{jk}(t,x) = 0.
\end{equation}
The Einstein summation convention is observed. The differential equations $(\ref{5.6})$ imply that the quantity
\begin{equation}\label{5.7}
Q(t) = \int (t^{2} + |x|^{2}) T^{00}(t,x) - 2t x_{j} T^{0j}(t,x) + (d - 1) t u(\partial_{t} u) - \frac{d - 1}{2} |u|^{2} dx,
\end{equation}
is conserved. Indeed, by $(\ref{5.6})$,
\begin{equation}\label{5.8}
\aligned
\frac{d}{dt} Q(t) = 2t \int T^{00}(t,x) dx - \int (t^{2} + |x|^{2}) \partial_{x_{j}} T^{0j}(t,x) dx - 2t \int x_{j} T^{0j}(t,x) dx + 2t \int x_{j} \partial_{x_{k}} T^{jk}(t,x) dx \\
+ (d - 1) \int u (\partial_{t} u) dx + (d - 1) t \int (\partial_{t} u)^{2} dx + (d - 1) t \int u(\Delta u - |u|^{\frac{4}{d - 1}} u) dx - (d - 1) \int u (\partial_{t} u) dx.
\endaligned
\end{equation}
Integrating the second term in $(\ref{5.8})$ by parts,
\begin{equation}\label{5.9}
\aligned
=  2t \int T^{00}(t,x) dx - 2t \int \delta_{jk} T^{jk}(t,x) dx  + (d - 1) t \int (\partial_{t} u)^{2} dx + (d - 1) t \int u(\Delta u - |u|^{\frac{4}{d - 1}} u) dx.
\endaligned
\end{equation}
Since $\delta_{jk} \delta_{jk} = d$,
\begin{equation}\label{5.10}
\aligned
= 2t \int T^{00}(t,x) dx - 2t \int |\nabla u|^{2} + dt \int (|\nabla u|^{2} - |\partial_{t} u|^{2}) dx + \frac{d(d - 1)t}{d + 1} \int |u|^{\frac{2(d + 1)}{d - 1}} dx \\ + (d - 1)t \int |\partial_{t} u|^{2} dx - (d - 1)t \int |\nabla u|^{2} dx - (d - 1)t \int |u|^{\frac{2(d + 1)}{d - 1}} dx.
\endaligned
\end{equation}
Doing some algebra,
\begin{equation}\label{5.11}
= 2t \int T^{00}(t,x) dx - t \int |\nabla u|^{2} dx - t \int |\partial_{t} u|^{2} dx - \frac{d - 1}{d + 1} t \int |u|^{\frac{2(d + 1)}{d - 1}} dx = 0.
\end{equation}
Therefore, $Q(t)$ is conserved.

Now then,
\begin{equation}\label{5.12}
\aligned
\int (t^{2} + |x|^{2}) T^{00}(t,x) dx - \int 2t x_{j} T^{0j}(t,x) dx \\ = \int (t^{2} + |x|^{2}) (\frac{1}{2} |\partial_{t} u|^{2} + \frac{1}{2} |\frac{x}{|x|} \cdot \nabla u|^{2} + \frac{1}{2} |\cancel{\nabla} u|^{2} + \frac{d - 1}{2(d + 1)} |u|^{\frac{2(d + 1)}{d - 1}}) dx \\
= \frac{1}{4} \int (t + |x|)^{2} |Lu|^{2} dx + \frac{1}{4} \int (t - |x|)^{2} |\underline{L}u|^{2} dx \\ + \frac{1}{2} \int (t^{2} + |x|^{2}) |\cancel{\nabla} u|^{2} dx + \frac{d - 1}{2(d + 1)} \int (t^{2} + |x|^{2}) |u|^{\frac{2(d + 1)}{d - 1}} dx.
\endaligned
\end{equation}
Next, integrating by parts,
\begin{equation}\label{5.13}
\aligned
\frac{1}{2} \langle (t + |x|) Lu, (d - 1) u \rangle_{L^{2}} + \frac{1}{2} \langle (t - |x|) Lu, (d - 1) u \rangle_{L^{2}} \\ = (d - 1)t \int (\partial_{t} u) u dx + (d - 1) \int u (x \cdot \nabla u) dx = (d - 1)t \int (\partial_{t} u) u dx - \frac{d(d - 1)}{2} \int |u|^{2} dx.
\endaligned
\end{equation}
Since
\begin{equation}\label{5.14}
-\frac{d(d - 1)}{2} \int |u|^{2} dx + \frac{(d - 1)^{2}}{2} \int |u|^{2} dx = -\frac{d - 1}{2} \int |u|^{2} dx,
\end{equation}
$(\ref{5.12})$--$(\ref{5.14})$ imply that $Q(t)$ is equal to the right hand side of $(\ref{5.4})$.\medskip

Now then, translating in time so that the initial data is at time $t = 1$, $(\ref{5.3})$ implies that $\mathcal E(1) < \infty$. Since $\mathcal E(t)$ is a conserved quantity,
\begin{equation}\label{5.15}
\int_{1}^{\infty} \int |u|^{\frac{2(d + 1)}{d - 1}} dx dt \lesssim \int_{1}^{\infty} \frac{\mathcal E(1)}{t^{2}} dt < \infty.
\end{equation}
Time reversal symmetry of $(\ref{1.1})$ implies $(\ref{5.1})$.
\end{proof}

The computations using the stress-energy tensor also yield a Morawetz estimate.
\begin{proposition}\label{p5.2}
For any $T > 0$, if $u$ solves $(\ref{1.1})$,
\begin{equation}\label{5.16}
\aligned
\int_{0}^{T} \int [\frac{1}{|x|^{3}} u^{2} + \frac{1}{|x|} |u|^{\frac{2(d + 1)}{d - 1}}] dx dt \lesssim \sup_{t \in [0, T]} \| \nabla_{t, x} u \|_{L^{2}}^{2}.
\endaligned
\end{equation}
\end{proposition}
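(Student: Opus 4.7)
The plan is to derive $(\ref{5.16})$ via a classical Morawetz-type identity obtained with the multiplier $M u := \partial_{r} u + \frac{d - 1}{2 |x|} u$, where $\partial_{r} = \frac{x}{|x|} \cdot \nabla$. I would multiply $(\ref{1.1})$ by $M u$, integrate over $[0, T] \times \mathbb{R}^{d}$, and read off a space-time bound whose bulk integrand is manifestly non-negative for $d \geq 4$ and whose boundary terms are controlled by $\|\nabla_{t, x} u\|_{L^{2}}^{2}$ via Hardy's inequality. The coefficient $\frac{d - 1}{2}$ in front of $u/|x|$ is chosen precisely so that $\int u_{t} (M u_{t}) \, dx = 0$ (using $\int u_{t} \partial_{r} u_{t} \, dx = -\frac{d - 1}{2} \int u_{t}^{2}/|x| \, dx$), which converts the contribution of $u_{tt}$ to a pure time derivative: $\int u_{tt} M u \, dx = \partial_{t} \int u_{t} M u \, dx$.

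The spatial term $-\int \Delta u \cdot M u \, dx$ is the heart of the computation. I would expand it into $-\int \Delta u \, \partial_{r} u \, dx - \frac{d - 1}{2} \int \Delta u \cdot u/|x| \, dx$ and handle each piece by integration by parts against the radial vector field $X^{j} = x^{j}/|x|$. The first piece, using $\partial_{j} X^{j} = (d - 1)/|x|$ and $\partial_{k} X^{j} \partial_{k} u \partial_{j} u = |\cancel{\nabla} u|^{2}/|x|$, equals $-\frac{d - 3}{2} \int |\cancel{\nabla} u|^{2}/|x| - \frac{d - 1}{2} \int (\partial_{r} u)^{2}/|x|$; the second piece, after one further integration by parts together with the identity $\int u \partial_{r} u/|x|^{2} \, dx = -\frac{d - 3}{2} \int u^{2}/|x|^{3} \, dx$ (which follows from $u \partial_{r} u = \tfrac{1}{2} \partial_{r} u^{2}$ and the radial-adjoint rule $\int \partial_{r} f \, dx = -(d - 1) \int f/|x| \, dx$), contributes $\frac{d - 1}{2} \int |\nabla u|^{2}/|x| + \frac{(d - 1)(d - 3)}{4} \int u^{2}/|x|^{3}$. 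The radial gradient pieces cancel and I obtain
\begin{equation*}
-\int \Delta u \cdot M u \, dx = \int \frac{|\cancel{\nabla} u|^{2}}{|x|} \, dx + \frac{(d - 1)(d - 3)}{4} \int \frac{u^{2}}{|x|^{3}} \, dx.
\end{equation*}
The nonlinear contribution, writing $|u|^{4/(d-1)} u \, \partial_{r} u = \frac{d - 1}{2(d + 1)} \partial_{r} |u|^{2(d+1)/(d-1)}$ and again invoking the radial-adjoint rule, reduces to $\frac{d - 1}{d + 1} \int |u|^{2(d+1)/(d-1)}/|x| \, dx$.

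Integrating the resulting identity from $0$ to $T$ yields
\begin{equation*}
\int_{0}^{T} \!\! \int \left[ \frac{|\cancel{\nabla} u|^{2}}{|x|} + \frac{(d - 1)(d - 3)}{4} \frac{u^{2}}{|x|^{3}} + \frac{d - 1}{d + 1} \frac{|u|^{2(d+1)/(d-1)}}{|x|} \right] dx \, dt = \Bigl[ \int u_{t} M u \, dx \Bigr]_{T}^{0}.
\end{equation*}
The boundary term is bounded by Cauchy--Schwarz together with Hardy's inequality $\|u/|x|\|_{L^{2}} \lesssim \|\nabla u\|_{L^{2}}$ (valid for $d \geq 3$), giving $\bigl|\int u_{t} M u \, dx\bigr| \lesssim \|\nabla_{t,x} u\|_{L^{2}}^{2}$. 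Since $(d - 1)(d - 3)/4 > 0$ for $d \geq 4$, discarding the non-negative angular-gradient term produces $(\ref{5.16})$.

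The main obstacle is just careful bookkeeping in the several integration-by-parts identities in dimension $d$; the only nontrivial analytic ingredient is Hardy's inequality. A minor subtlety worth flagging is that the specific coefficient $\frac{d - 1}{2}$ in $M$ is essential: any other choice would leave a residual $\int u_{t}^{2}/|x|$ term of indefinite sign after the time integration by parts, preventing the clean identity above.
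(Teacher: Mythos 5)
Your proposal is correct and is essentially the paper's own argument: the multiplier $\partial_r u + \frac{d-1}{2|x|}u$ is exactly the Morawetz potential $M(t)=\int u_t u_r r^{d-1}dr+\frac{d-1}{2}\int u_t u\, r^{d-2}dr$ used in the paper, with the same cancellation of the $u_t$ terms, the same sign analysis of the bulk terms after integration by parts, and the same Hardy-inequality bound on the boundary terms (the paper simply works in polar coordinates using radial symmetry, so the angular-gradient term you discard never appears, and its stated constant $\frac{(d-1)(d-3)}{2}$ versus your correct $\frac{(d-1)(d-3)}{4}$ is an immaterial discrepancy).
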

\begin{proof}
Since $u$ is radially symmetric, we compute in polar coordinates. Let $M(t)$ denote the Morawetz potential,
\begin{equation}\label{5.17}
M(t) = \int u_{t} u_{r} r^{d - 1} dr + \frac{d - 1}{2} \int u_{t} u r^{d - 2} dr.
\end{equation}
Using Hardy's inequality,
\begin{equation}\label{5.18}
\sup_{0 \leq t \leq T} |M(t)| \lesssim \sup_{t \in [0, T]} \| \nabla u \|_{L^{2}}^{2} + \| u_{t} \|_{L^{2}}^{2}.
\end{equation}
Next, by the product rule,
\begin{equation}\label{5.19}
\frac{d}{dt} M(t) = \int u_{t} u_{rt} r^{d - 1} dr + \frac{d - 1}{2} \int u_{t}^{2} r^{d - 2} dr + \int u_{tt} u_{r} r^{d - 1} dr + \frac{d - 1}{2} \int u_{tt} u r^{d - 2} dr.
\end{equation}
Integrating by parts, 
\begin{equation}\label{5.20}
\int u_{t} u_{rt} r^{d - 1} dr + \frac{d - 1}{2} \int u_{t}^{2} r^{d - 2} dr = 0.
\end{equation}
Next, integrating by parts, since $\Delta u = u_{rr} + \frac{d - 1}{r} u_{r}$,
\begin{equation}\label{5.21}
\aligned
\int \Delta u u_{r} r^{d - 1} dr + \frac{d - 1}{2} \int \Delta u u r^{d - 2} dr = -\frac{(d - 1)(d - 3)}{2} \int  u^{2} r^{d - 4} dr.
\endaligned
\end{equation}
Next, integrating by parts,
\begin{equation}\label{5.22}
\aligned
-\int  |u|^{\frac{4}{d - 1}} u u_{r} r^{d - 1} - \frac{d - 1}{2} \int |u|^{\frac{2(d + 1)}{d - 1}} r^{d - 2} dr = -\frac{d - 1}{d + 1} \int |u|^{\frac{2(d + 1)}{d - 1}} r^{d - 2} dr.
\endaligned
\end{equation}
Since $(\ref{5.21})$ and $(\ref{5.22})$ have the same sign, the proof is complete.
\end{proof}

Finite propagation speed also allows us to cut-off in space, which will be important to the proof of scattering. The reason for this is that examining the conformal energy in $(\ref{5.4})$ implies that
\begin{equation}\label{5.23}
t^{2} \int_{|x| \leq \frac{1}{2} |t|} |\nabla_{t,x} u(t, x)|^{2} dx \lesssim \mathcal E(t) + \int_{|x| \leq \frac{1}{2} |t|} |u(t, x)|^{2} dx.
\end{equation}
Thus, a cut-off in space yields a better bound on $(\ref{5.18})$.
\begin{proposition}\label{p5.3}
Suppose $u$ solves $(\ref{1.1})$. For any $T > 0$, if $\chi \in C_{0}^{\infty}(\mathbb{R}^{d})$, $\chi(x) = 1$ for $|x| \leq 1$, $\chi(x) = 0$ for $|x| \geq 2$, then for any $\delta > 0$,
\begin{equation}\label{5.24}
\aligned
\int_{0}^{T} \int \chi(\frac{x}{\delta T}) [\frac{1}{|x|^{3}} u^{2} + \frac{1}{|x|} |u|^{\frac{2(d + 1)}{d - 1}}] dx dt \lesssim_{\delta} \sup_{t \in [0, T]} \| \nabla_{t, x} u \|_{L^{2}(|x| \leq 2 \delta T)}^{2} + \sup_{t \in [0, T]} \frac{1}{\delta^{2} T^{2}} \| u \|_{L^{2}(|x| \leq 2 \delta T)}^{2}.
\endaligned
\end{equation}
\end{proposition}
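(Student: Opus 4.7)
The plan is to repeat the Morawetz computation of Proposition~\ref{p5.2} with the cutoff $\chi(r/\delta T)$ inserted. Define the localized Morawetz potential
\[
M(t) := \int \chi(r/\delta T)\left[u_t u_r \, r^{d-1} + \tfrac{d-1}{2}\, u_t u \, r^{d-2}\right] dr,
\]
in analogy with~(\ref{5.17}). Since $\chi(r/\delta T)$ is supported in $\{|x| \leq 2\delta T\}$, Cauchy--Schwarz in $L^2(r^{d-1}dr)$ together with a local Hardy inequality---obtained by applying the standard radial Hardy estimate to $u$ times a slightly fatter cutoff, whose boundary error is $O((\delta T)^{-1}\|u\|_{L^2})$---yields
\[
|M(t)| \lesssim \|\nabla_{t,x} u\|_{L^2(|x| \leq 2\delta T)}^{2} + \frac{1}{(\delta T)^{2}}\|u\|_{L^2(|x| \leq 2\delta T)}^{2}.
\]

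Differentiate $M$ in time and substitute $u_{tt} = \Delta u - |u|^{4/(d-1)} u$. The integrations by parts in~(\ref{5.19})--(\ref{5.22}) go through with $\chi(r/\delta T)$ inserted into every integrand, except that whenever a derivative falls on $\chi$ we pick up a cutoff error of the form
\[
\frac{1}{(\delta T)^{k}}\int \chi^{(k)}(r/\delta T)\, Q(u, u_r, u_t)\, r^{d-1}\, dr, \qquad k \in \{1, 2\},
\]
where $Q$ is a quadratic form. After rearrangement one obtains
\[
c_d \int \chi(r/\delta T)\left[\frac{u^2}{|x|^3} + \frac{|u|^{\frac{2(d+1)}{d-1}}}{|x|}\right] dx \leq -\frac{d}{dt} M(t) + E(t),
\]
where $c_d > 0$ for $d \geq 4$ and $E(t)$ bundles the cutoff errors. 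Since every term in $E(t)$ is supported on the annulus $\delta T \leq r \leq 2\delta T$, where $r \sim \delta T$, Cauchy--Schwarz and local Hardy give
\[
|E(t)| \lesssim \frac{1}{\delta T}\|\nabla_{t,x} u\|_{L^2(|x| \leq 2\delta T)}^{2} + \frac{1}{(\delta T)^{2}}\|u\|_{L^2(|x| \leq 2\delta T)}^{2}.
\]

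Finally, integrate this differential inequality over $[0, T]$. The contribution $\int_0^T (-\tfrac{d}{dt} M)\, dt = M(0) - M(T)$ is controlled by the bound on $|M|$ established above, and $\int_0^T |E(t)|\, dt$ picks up an extra factor of $T$, producing $T/(\delta T) = 1/\delta$ in front of the supremum norms, which is absorbed into the $\lesssim_\delta$ constant. Combining yields~(\ref{5.24}). The main obstacle is the careful bookkeeping of the several cutoff-error terms that appear when $\int \chi \, \Delta u \,(u_r + \tfrac{d-1}{2r} u)\, r^{d-1}\, dr$ is integrated by parts twice: one has to verify that every error produced by $\chi'$ or $\chi''$ can be absorbed into the right-hand side of~(\ref{5.24}) without damaging the positive Morawetz density on $\{|x| \leq \delta T\}$ and without requiring norms on a set larger than $\{|x| \leq 2\delta T\}$.
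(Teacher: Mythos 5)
Your strategy is the same as the paper's: localize the Morawetz potential of Proposition \ref{p5.2} by the factor $\chi(r/\delta T)$, repeat the integrations by parts $(\ref{5.19})$--$(\ref{5.22})$, and treat the terms where derivatives land on the cutoff as errors supported on the annulus $\delta T \leq |x| \leq 2\delta T$. Your bound on $\sup_t |M(t)|$ is fine. However, two items in the bookkeeping --- exactly the part you flag as the main obstacle --- need repair. First, your claimed bound $|E(t)| \lesssim \frac{1}{\delta T}\|\nabla_{t,x}u\|_{L^2(|x|\le 2\delta T)}^2 + \frac{1}{(\delta T)^2}\|u\|_{L^2(|x|\le 2\delta T)}^2$ does not close the argument: integrating over $[0,T]$ costs a factor of $T$, so the second piece becomes $\frac{1}{\delta^2 T}\sup_t \|u\|_{L^2(|x|\le 2\delta T)}^2$, which is \emph{not} controlled by the $\frac{1}{\delta^2 T^2}\sup_t\|u\|_{L^2}^2$ term on the right of $(\ref{5.24})$ (nor by the gradient term). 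The correct size of the $u^2$-type cutoff errors is $\frac{1}{(\delta T)^3}\|u\|_{L^2(\delta T\le |x|\le 2\delta T)}^2$, and it does fall out of the very computation you describe: these errors come either from $\chi'$ hitting $\int \chi\, u_r u\, r^{d-3}\,dr$, where on the annulus $r^{d-3}\sim (\delta T)^{-2} r^{d-1}$, or from the cross term $\frac{1}{(\delta T)^2}\int_{\mathrm{ann}} |u_r||u|\,dx$ produced when $\chi'$ hits $\int \chi\, u_{rr} u\, r^{d-2}\,dr$, which Cauchy--Schwarz splits as $\frac{1}{\delta T}\int |u_r|^2\,dx + \frac{1}{(\delta T)^3}\int u^2\,dx$. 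With the exponent $3$, time integration gives $T\cdot (\delta T)^{-3} = \delta^{-3} T^{-2}$, i.e.\ exactly the right-hand side of $(\ref{5.24})$ up to the $\lesssim_{\delta}$ constant; this is the paper's $(\ref{5.28})$.

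Second, not every cutoff error is a quadratic form in $(u, u_r, u_t)$: integrating the nonlinear term by parts leaves $\frac{c}{\delta T}\int \chi'(\tfrac{r}{\delta T})\, |u|^{\frac{2(d+1)}{d-1}}\, r^{d-1}\,dr$, which your $E(t)$ does not account for and which is not obviously dominated by the right-hand side of $(\ref{5.24})$. Take $\chi$ radial and non-increasing; then $\chi' \le 0$, this term has a favorable sign, and it may be discarded --- the same sign device the paper uses with $\psi' \le 0$ in the proof of Proposition \ref{p5.4}. With these two corrections your argument coincides with the paper's proof.
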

\begin{proof}
This time use
\begin{equation}\label{5.25}
M(t) = \int \chi(\frac{x}{\delta T}) u_{t} u_{r} r^{d - 1} dr + \frac{d - 1}{2} \int \chi(\frac{x}{\delta T}) u u_{t} r^{d - 2} dr.
\end{equation}
We can use the same computations in $(\ref{5.17})$--$(\ref{5.22})$, only we also have to take into account the fact that when integrating by parts, derivatives can hit $\chi(\frac{x}{\delta T})$. Now then, since
\begin{equation}\label{5.26}
|\nabla^{(k)} \chi(\frac{x}{\delta T})| \lesssim_{k} \frac{1}{\delta^{k} T^{k}}, \qquad \text{for} \qquad k = 1, 2, 3, \qquad \text{and is supported on} \qquad \delta T \leq |x| \leq 2 \delta T.
\end{equation}
Moreover, for any $l \geq 0$,
\begin{equation}\label{5.27}
\frac{1}{r^{l}} |\nabla^{(k)} \chi(\frac{x}{\delta T})| \lesssim_{k, l} \frac{1}{\delta^{k + l} T^{k + l}}, \qquad \text{for} \qquad k = 1, 2, 3, \qquad \text{and is supported on} \qquad \delta T \leq |x| \leq 2 \delta T.
\end{equation}
Therefore, the contribution of the additional terms coming from $\chi(\frac{x}{\delta T})$ is bounded by
\begin{equation}\label{5.28}
\aligned
\frac{1}{\delta T} \int_{0}^{T} \int_{\delta T \leq |x| \leq 2 \delta T} |\nabla_{t,x} u|^{2} dx dt + \frac{1}{\delta^{3} T^{3}} \int_{0}^{T} \int_{\delta T \leq |x| \leq 2 \delta T} |u|^{2} dx dt  \\ \lesssim_{\delta} \sup_{t \in [0, T]} \| \nabla_{t, x} u \|_{L^{2}(|x| \leq 2 \delta T)}^{2} + \sup_{t \in [0, T]} \frac{1}{\delta^{2} T^{2}} \| u \|_{L^{2}(|x| \leq 2 \delta T)}^{2}.
\endaligned
\end{equation}
\end{proof}

Next, we prove a local energy decay estimate.
\begin{proposition}\label{p5.4}
For any $T > 0$, $R > 0$, if $u$ solves $(\ref{1.1})$,
\begin{equation}\label{5.29}
\aligned
R^{-1} \int_{0}^{T} \int_{|x| \leq R} \chi(\frac{x}{\delta T}) [|\nabla u|^{2} + u_{t}^{2}] dx dt \lesssim_{\delta} \sup_{t \in [0, T]} \| \nabla_{t, x} u \|_{L^{2}(|x| \leq 2 \delta T)}^{2} \\ + \sup_{t \in [0, T]} \frac{1}{\delta^{2} T^{2}} \| u \|_{L^{2}(|x| \leq 2 \delta T)}^{2} + \sup_{t \in [0, T]} \| u \|_{L^{\frac{2(d + 1)}{d - 1}}(|x| \leq 2 \delta T)}^{\frac{2(d + 1)}{d - 1}}.
\endaligned
\end{equation}
\end{proposition}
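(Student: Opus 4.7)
The plan is to run a truncated Morawetz identity in the spirit of Propositions 5.2 and 5.3, but with an additional smooth radial cutoff at scale $R$ engineered to produce the factor $R^{-1}$ on the left-hand side of (5.29). The key choice is $\phi(r) = r/(R+r)$: a smooth, nondecreasing function with $\phi \leq 1$, $\phi'(r) = R/(R+r)^2 \geq 1/(4R)$ on $[0, R]$, and $\phi''(r) = -2R/(R+r)^3 \leq 0$. I would work with the truncated Morawetz potential
$$M(t) = \int \chi(x/\delta T) \phi(r) u_t u_r r^{d-1} dr + \frac{d-1}{2}\int \chi(x/\delta T) \phi(r) u_t u r^{d-2} dr.$$

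Differentiating in time, substituting $u_{tt} = \Delta u - |u|^{4/(d-1)} u$ in polar coordinates, and running the same integration-by-parts scheme as in the proof of Proposition 5.2 (now carrying the factors $\chi \phi$ through every step, and integrating by parts once more on the $u u_r r^{d-3}$ cross term produced) yields
$$-M'(t) = \frac{1}{2}\int \chi(x/\delta T) \phi'(r)(u_t^2 + u_r^2) r^{d-1} dr + A(t) + B(t) + E(t),$$
where
$$A(t) = \frac{d-1}{4}\int \chi(x/\delta T) u^2 r^{d-4}\bigl[(d-3)(\phi - r\phi') - r^2 \phi''\bigr] dr,$$
$$B(t) = \frac{d-1}{d+1}\int \chi(x/\delta T) \bigl[\phi - \tfrac{1}{2} r \phi'\bigr] |u|^{\frac{2(d+1)}{d-1}} r^{d-2} dr,$$
and $E(t)$ collects the error terms in which a derivative hits $\chi(x/\delta T)$ (hence is supported on $\delta T \leq |x| \leq 2 \delta T$). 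For $\phi(r) = r/(R+r)$, elementary algebra gives $\phi - r \phi' = r^2/(R+r)^2$, $\phi - \tfrac{1}{2} r \phi' = r(R + 2r)/(2(R+r)^2)$, and $(d-3)(\phi - r \phi') - r^2 \phi'' = r^2\bigl[(d-1) R + (d-3) r\bigr]/(R+r)^3$, all manifestly nonnegative for $d \geq 3$. Consequently $A(t), B(t) \geq 0$; the positivity of $B$ is where the defocusing sign enters.

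With both $A$ and $B$ nonnegative and hence discardable, time-integrating the identity and using $\phi'(r) \geq 1/(4R)$ on $[0, R]$ produces
$$\frac{1}{R}\int_0^T \int_{|x| \leq R} \chi(x/\delta T)\bigl[|\nabla u|^2 + u_t^2\bigr] dx dt \lesssim |M(T) - M(0)| + \int_0^T |E(t)| dt.$$
Since $\phi \leq 1$, Cauchy--Schwarz and the radial Hardy inequality localized to $|x| \leq 2 \delta T$ give $|M(t)| \lesssim \sup_{t \in [0, T]}\|\nabla_{t,x} u\|^2_{L^2(|x| \leq 2 \delta T)} + \tfrac{1}{\delta^2 T^2}\sup_{t \in [0, T]}\|u\|^2_{L^2(|x| \leq 2 \delta T)}$. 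For $E(t)$, the derivative of $\chi$ produces a prefactor $1/\delta T$ supported on the annulus $\delta T \leq |x| \leq 2 \delta T$; time-integrating on $[0, T]$ converts this into an overall factor of $T/(\delta T) = 1/\delta$ multiplying the three norms on the right-hand side of (5.29), with the $L^{\frac{2(d+1)}{d-1}}$ term arising from the nonlinear contribution to $E$.

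The main subtle step is the algebraic verification that the densities $A(t)$ and $B(t)$ are nonnegative for the particular cutoff $\phi(r) = r/(R+r)$. These positivity identities are exactly what prevents the $O(1/R)$ cross terms produced by the additional $R$-cutoff from requiring a Hardy-type absorption --- such an absorption would contribute an unwanted factor of $T/R$ and spoil the bound's uniformity in $R$.
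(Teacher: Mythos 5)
Your proof is correct, and it follows the same overall strategy as the paper: a Morawetz-type potential built from a multiplier $a(r)u_r+\frac{d-1}{2}\frac{a(r)}{r}u$ carrying both the cutoff $\chi(\frac{x}{\delta T})$ and an $R$-scale weight, with the boundary terms $|M(t)|$ controlled by Cauchy--Schwarz and a localized Hardy inequality, and the annulus errors from derivatives hitting $\chi$ producing the three supremum terms (including the $L^{\frac{2(d+1)}{d-1}}$ term from the nonlinearity) after time integration. The one genuine difference is your choice of weight. The paper takes $a(r)=\frac{r}{R}\psi(\frac{r}{R})$ with $\psi\equiv 1$ on $[0,1]$ and $\psi=\frac{3}{2r}$ for $r>2$; with that choice the zeroth-order term comes out with an unfavorable sign, appearing as $\frac{1}{R}\int \chi\psi\, u^{2} r^{d-3}dr$ in $(\ref{5.34})$, and the paper controls it via $(\ref{5.36})$ by invoking the cut-off Morawetz estimate of Proposition $\ref{p5.3}$, whose right-hand side consists precisely of the first two terms of $(\ref{5.29})$ and is uniform in $R$. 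Your choice $\phi(r)=\frac{r}{R+r}$ instead makes both densities $(d-3)(\phi-r\phi')-r^{2}\phi''=\frac{r^{2}[(d-1)R+(d-3)r]}{(R+r)^{3}}$ and $\phi-\frac{1}{2}r\phi'=\frac{r(R+2r)}{2(R+r)^{2}}$ pointwise nonnegative (your algebra checks out, as does $\phi'\geq\frac{1}{4R}$ on $[0,R]$ and the coefficient $\frac{d-1}{d+1}$ in $B$), so these terms can simply be discarded and the proof becomes self-contained, with no appeal to Proposition $\ref{p5.3}$. Your closing remark is accurate for a naive pointwise-in-time Hardy absorption, which would indeed cost a factor $\frac{T}{R}$; note, however, that the paper's actual absorption through the spacetime bound of Proposition $\ref{p5.3}$ is also uniform in $R$, so both routes are sound --- yours is marginally more economical, the paper's reuses an estimate it has already established.
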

\begin{proof}
Define $\psi(r) \in C_{0}^{\infty}(\mathbb{R}^{d})$ and suppose $\psi(r) = 1$ for $0 \leq r \leq 1$, $\psi(r) = \frac{3}{2r}$ for $r > 2$, and $\partial_{r}(r \psi(r)) = \phi(r) \geq 0$ for $r \geq 0$. Now, define the Morawetz potential
\begin{equation}\label{5.30}
M(t) = R^{-1} \int \chi(\frac{r}{\delta T}) \psi(\frac{r}{R}) u_{t} u_{r} r^{d} dr + \frac{d - 1}{2} R^{-1} \int \chi(\frac{r}{\delta T}) \psi(\frac{r}{R}) u_{t} u r^{d - 1} dr.
\end{equation}
As in $(\ref{5.25})$,
\begin{equation}\label{5.31}
\sup_{0 \leq t \leq T} M(t) \lesssim_{\delta} \sup_{t \in [0, T]} \| \nabla_{t, x} u \|_{L^{2}(|x| \leq 2 \delta T)}^{2} + \sup_{t \in [0, T]} \frac{1}{\delta^{2} T^{2}} \| u \|_{L^{2}(|x| \leq 2 \delta T)}^{2}. 
\end{equation}

Next, by direct computation,
\begin{equation}\label{5.32}
\aligned
\frac{d}{dt} M(t) = R^{-1} \int \chi(\frac{r}{\delta T}) \psi(\frac{r}{R}) u_{t} u_{tr} r^{d} dr + \frac{d - 1}{2} R^{-1} \int \chi(\frac{r}{\delta T}) \psi(\frac{r}{R}) u_{t}^{2} r^{d - 1} dr \\
+ R^{-1} \int \chi(\frac{r}{\delta T}) \psi(\frac{r}{R}) u_{tt} u_{r} r^{d} dr + \frac{d - 1}{2} R^{-1} \int \chi(\frac{r}{\delta T}) \psi(\frac{r}{R}) u_{tt} u r^{d - 1} dr.
\endaligned
\end{equation}
Integrating by parts in $r$, by $(\ref{5.27})$,
\begin{equation}\label{5.33}
\aligned
R^{-1} \int \chi(\frac{r}{\delta T}) \psi(\frac{r}{R}) u_{t} u_{tr} r^{d} dr + \frac{d - 1}{2} R^{-1} \int \chi(\frac{r}{\delta T}) \psi(\frac{r}{R}) u_{t}^{2} r^{d - 1} dr \\ = -\frac{1}{2} \int \chi(\frac{r}{\delta T}) \phi(\frac{r}{R}) u_{t}^{2} r^{d - 1} dr + \frac{1}{\delta T} \int_{\delta T \leq |x| \leq 2 \delta T} u_{t}^{2} dx.
\endaligned
\end{equation}

Next, integrating by parts and using $(\ref{5.27})$,
\begin{equation}\label{5.34}
\aligned
R^{-1} \int \chi(\frac{r}{\delta T}) \psi(\frac{r}{R}) (u_{rr} + \frac{d - 1}{r} u_{r}) u_{r} r^{d} dr + \frac{d - 1}{2} R^{-1} \chi(\frac{r}{\delta T}) \psi(\frac{r}{R}) (u_{rr} + \frac{d - 1}{r} u_{r}) u r^{d - 1} dr \\
= -\frac{1}{2} \int \chi(\frac{r}{\delta T}) \phi(\frac{r}{R}) u_{r}^{2} r^{3} dr + \frac{1}{\delta T} \int_{\delta T \leq |x| \leq 2 \delta T} |\nabla u|^{2} dx + \frac{1}{R} \int \chi(\frac{r}{\delta T}) \psi(\frac{r}{R}) u^{2} r^{d - 3} dr + \frac{1}{\delta^{3} T^{3}} \int_{\delta T \leq |x| \leq 2 \delta T} u^{2} dx.
\endaligned
\end{equation}
Now then,
\begin{equation}\label{5.35}
\aligned
\frac{1}{\delta T} \int_{0}^{T} \int_{\delta T \leq |x| \leq 2 \delta T} |\nabla u|^{2} dx dt + \frac{1}{\delta^{3} T^{3}} \int_{0}^{T} \int_{\delta T \leq |x| \leq 2 \delta T} u^{2} dx dt \\ \lesssim_{\delta} \sup_{t \in [0, T]} \| \nabla_{t, x} u \|_{L^{2}(|x| \leq 2 \delta T)}^{2} + \sup_{t \in [0, T]} \frac{1}{\delta^{2} T^{2}} \| u \|_{L^{2}(|x| \leq 2 \delta T)}^{2}.
\endaligned
\end{equation}
Also,
\begin{equation}\label{5.36}
\frac{1}{R} \int_{0}^{T} \int \chi(\frac{r}{\delta T}) \psi(\frac{r}{R}) u^{2} r^{d - 3} dr dt \lesssim \int_{0}^{T} \int \chi(\frac{r}{\delta T}) \frac{1}{|x|^{3}} u^{2} dx dt,
\end{equation}
and we can use Proposition $\ref{p5.3}$ to estimate this term.\medskip

Next, integrating by parts,
\begin{equation}\label{5.37}
\aligned
-R^{-1} \int \chi(\frac{r}{\delta T}) \psi(\frac{r}{R})  |u|^{\frac{4}{d - 1}} u u_{r} r^{d} dr - \frac{d - 1}{2} R^{-1} \int \chi(\frac{r}{\delta T}) \psi(\frac{r}{R}) |u|^{\frac{2(d + 1)}{d - 1}} r^{d - 1} dr \\
=  -\frac{d - 1}{2(d + 1)R} \int \chi(\frac{r}{\delta T}) \psi(\frac{r}{R}) |u|^{\frac{2(d + 1)}{d - 1}} r^{3} dr + \frac{1}{R^{2}} \int \chi(\frac{r}{\delta T}) \psi'(\frac{r}{R}) |u|^{\frac{2(d + 1)}{d - 1}} r^{4} + \frac{1}{\delta T} \int_{\delta T \leq |x| \leq 2 \delta T} |u|^{\frac{2(d + 1)}{d - 1}} dx \\
\leq  -\frac{d - 1}{2R(d + 1)} \int \chi(\frac{r}{\delta T}) \psi(\frac{r}{R}) |u|^{\frac{2(d + 1)}{d - 1}} r^{d - 1} dr  + \frac{1}{\delta T} \int_{\delta T \leq |x| \leq 2 \delta T} |u|^{\frac{2(d + 1)}{d - 1}} dx.
\endaligned
\end{equation}
The last inequality uses the fact that $\psi'(r) \leq 0$ for all $r$. This completes the proof of the theorem.
\end{proof}

\section{Scattering in the $d = 4$ case}
The proofs of global well-posedness and scattering are slightly different in the $d = 4$ and $d > 4$ cases. We start with the $d = 4$ case.
\begin{theorem}\label{t2.1}
If $u$ is a solution to the conformal wave equation,
\begin{equation}\label{2.2}
u_{tt} - \Delta u + |u|^{\frac{4}{3}} u = 0, \qquad u(0,x) = u_{0} \in \dot{H}^{1/2}, \qquad u_{t}(0, x) = u_{1} \in \dot{H}^{-1/2}, \qquad u_{0}, u_{1} \qquad \text{radial},
\end{equation}
$u : \mathbb{R} \times \mathbb{R}^{4} \rightarrow \mathbb{R}$, then $u$ is a global solution to $(\ref{2.2})$ and scatters, that is
\begin{equation}\label{2.3}
\| u \|_{L_{t,x}^{\frac{10}{3}}(\mathbb{R} \times \mathbb{R}^{4})} \leq C(u_{0}, u_{1}) < \infty.
\end{equation}
\end{theorem}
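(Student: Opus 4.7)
The plan is to implement a Fourier-truncation splitting in the style of $\cite{kenig2000global}$, adapted to the conformal setting. First, using density of smooth, compactly supported radial functions in $\dot{H}^{1/2}(\mathbb{R}^{4})$, decompose the initial data as $u_{0} = v_{0} + w_{0}$ and $u_{1} = v_{1} + w_{1}$ so that $(v_{0}, v_{1})$ satisfies the weighted regularity hypothesis $(\ref{5.3})$ (in particular has finite conformal energy) while the remainder $(w_{0}, w_{1})$ obeys $\|w_{0}\|_{\dot{H}^{1/2}} + \|w_{1}\|_{\dot{H}^{-1/2}} < \epsilon_{0}(4)$, below the small-data threshold of Theorem $\ref{t4.2}$. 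Let $w$ denote the corresponding small-data global solution to $(\ref{2.2})$; by Theorem $\ref{t4.2}$ and Theorem $\ref{t4.3}$ it satisfies $\|w\|_{L^{10/3}_{t,x}} \lesssim \epsilon_{0}$, together with the weighted radial bounds of Lemma $\ref{l4.4}$. Define the remainder $v := u - w$ on the maximal interval of existence of $u$; then $v$ has initial data $(v_{0}, v_{1})$ and satisfies the perturbed wave equation
\[ v_{tt} - \Delta v + |v|^{4/3} v = -F, \qquad F = |v+w|^{4/3}(v+w) - |v|^{4/3} v - |w|^{4/3} w, \]
with the pointwise bound $|F| \lesssim |v| |w|^{4/3} + |v|^{4/3} |w|$.

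Next, track the conformal charge $Q(v(t))$ defined as in $(\ref{5.7})$. Because $v$ no longer satisfies the homogeneous equation, the stress-energy computation $(\ref{5.8})$--$(\ref{5.11})$ no longer closes to zero; the divergence identities $(\ref{5.6})$ for $v$ pick up source terms $-v_{t} F$ and $F \partial_{x_{j}} v$, and rerunning the same integration by parts yields
\[ \frac{d}{dt} Q(v(t)) = -\tfrac{1}{2}\int (t+|x|)\bigl[(t+|x|) L v + 3 v\bigr] F \, dx - \tfrac{1}{2}\int (t-|x|)\bigl[(t-|x|) \underline{L} v + 3 v\bigr] F \, dx, \]
precisely the form anticipated by $(\ref{1.14})$ with $d = 4$. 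Applying Cauchy--Schwarz and the pointwise bound on $F$, each term is controlled by $\sqrt{Q(v(t))}$ times $\|(t+|x|) F\|_{L^{2}_{x}}$, and the latter factors by H\"older as
\[ \|(t+|x|) F\|_{L^{2}_{x}} \lesssim \|(t+|x|) |w|^{2/3}\|_{L^{\infty}_{x}} \bigl( \|v\|_{L^{10/3}_{x}} \|w\|_{L^{10/3}_{x}}^{2/3} + \|v\|_{L^{10/3}_{x}}^{4/3} \|w\|_{L^{10/3}_{x}}^{1/3} \bigr). \]

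The weighted pointwise control of $w$ is the crux. In the exterior region $\{|x| \geq \delta t\}$, combining the radial Sobolev estimate $(\ref{1.17})$ with the outer dispersive bound $(\ref{1.17.1})$ gives $\|(t+|x|) |w|^{2/3}\|_{L^{\infty}_{x}(|x|\geq \delta t)} \lesssim_{\delta} 1$. Together with $\|v\|_{L^{10/3}_{x}} \lesssim \mathcal{E}(v(t))^{3/10} t^{-3/5}$, which follows from $\int |v|^{10/3} dx \lesssim \mathcal{E}(v(t))/t^{2}$ and the definition $(\ref{5.4})$, the exterior contribution to $\int_{1}^{T} |dQ/dt| \, dt$ is small in $\epsilon_{0}$ as a multiple of $\int \mathcal{E}(v(t)) \, dt/t$, so a Gronwall-in-$t$ argument closes. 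In the interior region $\{|x| \leq \delta t\}$, where $t|w|^{2/3}$ is not pointwise bounded, I instead exploit the improved conformal-energy control $(\ref{5.23})$ together with the local Morawetz estimate of Proposition $\ref{p5.3}$ and the local energy decay of Proposition $\ref{p5.4}$, each applied with the cut-off $\chi(x/\delta T)$. These three bounds together convert the missing pointwise decay of $w$ into an $L^{2}_{t}$-integrable contribution to $dQ/dt$ on the interior cone.

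Assembling the exterior Gronwall estimate and the interior local-decay estimate closes a bootstrap argument giving $\sup_{t \geq 1} \mathcal{E}(v(t)) < \infty$. From this and $\int |v|^{10/3} dx \lesssim \mathcal{E}(v(t))/t^{2}$ one obtains $\|v\|_{L^{10/3}_{t,x}([1,\infty) \times \mathbb{R}^{4})} < \infty$; time reversal symmetry handles negative times and Theorem $\ref{t1.2}$ handles the bounded slab $|t| \leq 1$, so $\|v\|_{L^{10/3}_{t,x}(\mathbb{R} \times \mathbb{R}^{4})} < \infty$. Combined with $\|w\|_{L^{10/3}_{t,x}} \lesssim \epsilon_{0}$ and the triangle inequality applied to $u = v + w$, this yields $(\ref{2.3})$, and scattering follows from the equivalence of $(\ref{2.3})$ with the Picard iteration argument of Theorem $\ref{t4.2}$ on each small-norm subinterval. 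The main obstacle is the interior region $\{|x| \leq \delta t\}$: without the pointwise decay $t|w|^{2/3} \lesssim 1$, the naive bootstrap on $Q(v(t))$ fails there, and closing the estimate requires carefully combining the local Morawetz bound $(\ref{5.24})$, the local energy decay $(\ref{5.29})$, and the conformal-energy weighted $L^{2}$ estimate $(\ref{5.23})$ for $v$.
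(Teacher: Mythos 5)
Your outline coincides with the paper's strategy (the same splitting $u=v+w$ with $v$ carrying the finite conformal energy, the same monotonicity computation for the conformal charge of $v$, and the same exterior/interior dichotomy), but the quantitative heart of the argument is missing, and the exterior step as you have written it does not close. With your H\"older splitting and $\|v\|_{L^{10/3}_{x}}\lesssim \mathcal E(t)^{3/10}t^{-3/5}$, the exterior contribution gives $\frac{d}{dt}\mathcal E\lesssim \delta^{-1}\bigl(\mathcal E^{4/5}t^{-3/5}\|w(t)\|_{L^{10/3}_{x}}^{2/3}+\mathcal E^{9/10}t^{-4/5}\|w(t)\|_{L^{10/3}_{x}}^{1/3}\bigr)$; a Gronwall bound on $\sup_{t}\mathcal E$ then requires $\int_{1}^{\infty}t^{-3/5}\|w\|_{L^{10/3}_{x}}^{2/3}dt$ and $\int_{1}^{\infty}t^{-4/5}\|w\|_{L^{10/3}_{x}}^{1/3}dt$ to be finite, and H\"older against $\|w\|_{L^{10/3}_{t,x}}\lesssim\epsilon$ leaves the divergent factors $\int_{1}^{\infty}t^{-3/4}dt$ and $\int_{1}^{\infty}t^{-8/9}dt$. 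Likewise the quantity $\int \mathcal E(t)\,dt/t$ you propose to control diverges even for bounded $\mathcal E$. What actually closes the argument in the paper is different bookkeeping: one bootstraps the integral $\int_{1}^{\infty}\mathcal E(t)t^{-2}dt$ (which is also exactly what controls $\|v\|_{L^{10/3}_{t,x}}$), one integrates $\frac{d}{dt}\mathcal E$ only over $[\max(1,\delta^{1/2}t),t]$ and uses the Fubini/change-of-variables step (\ref{2.23})--(\ref{2.25}) to gain the extra $t^{-1}$ and the smallness $\delta^{1/2}$, and one uses the weighted radial norms of Lemma \ref{l4.4} (e.g.\ $\||x|w\|_{L^{10}_{t,x}}$, $\||x|^{3/5}w\|_{L^{2}_{t}L^{10}_{x}}$) calibrated so that the powers collapse exactly to $(\mathcal E/t^{2})^{9/10}$ and $(\mathcal E/t^{2})^{4/5}$. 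None of these devices appears in your write-up, and without them the estimate fails even before one reaches the interior cone.

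In the interior region you name the correct tools but give no argument, and there is a structural obstruction you do not address: Propositions \ref{p5.3} and \ref{p5.4} are stated for solutions of (\ref{1.1}), while $v$ solves a perturbed equation with source $F$; moreover their right-hand sides involve $\sup_{t}\|\nabla_{t,x}\cdot\|_{L^{2}}$, which is unavailable for $u$ (only $\dot H^{1/2}$ data) and must be supplied for $v$ via the conformal energy and the cutoff bound (\ref{2.33})--(\ref{2.36}). So one must re-derive the Morawetz and local energy decay estimates for $v$ with error terms that again contain $F$, and then absorb those errors into the left-hand side using the smallness of $w$ -- this is precisely Propositions \ref{p2.2}--\ref{p2.4}, and it is not automatic. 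Even granted those bounds, the cross-term estimate on $\{|x|\le\delta t\}$ requires the splitting at the radius $\mathcal R$ defined through $\mathcal E(t)$ itself as in (\ref{2.28})--(\ref{2.30}), the replacement of $\mathcal E$ by its running supremum, and the specific weighted H\"older exponents of (\ref{2.61})--(\ref{2.64}); the sentence ``these three bounds together convert the missing pointwise decay'' is an assertion of the conclusion, not a proof. Two smaller points: the bounds (\ref{1.17})--(\ref{1.17.1}) are only heuristic (the paper's remark notes the logarithmic loss at the $L^{\infty}$ radial Sobolev endpoint), so the rigorous exterior estimate should be run with the Lemma \ref{l4.4} norms instead; and the bootstrap needs an a priori global, scattering solution to run on, which the paper arranges through the $\sigma$-regularization (\ref{2.10})--(\ref{2.14}) and a final perturbation/limit step -- your version on the maximal interval of existence needs at least an argument for why $\mathcal E(v(t))$ is finite there and why a continuity argument applies.
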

\begin{remark}
Note that Theorem $\ref{t2.1}$ does not state that the bound on $(\ref{2.3})$ depends on the $\dot{H}^{1/2} \times \dot{H}^{-1/2}$ norm of the initial data, but rather depends on the actual initial data $(u_{0}, u_{1})$. The proof that a bound exists that is a function of the $\dot{H}^{1/2} \times \dot{H}^{-1/2}$ norm will utilize the profile decomposition.
\end{remark}
\begin{proof}
In this case, it will be helpful to begin with a more detailed explanation of the approximation analysis that will be used in every subsequent proof.\medskip

By time reversal symmetry, it is enough to show that $\| u \|_{L_{t,x}^{10/3}([0, \infty) \times \mathbb{R}^{4})} < \infty$. Furthermore, we can translate in time so that the initial data is at $t = 1$ and show that $\| u \|_{L_{t,x}^{10/3}([1, \infty) \times \mathbb{R}^{4})} < \infty$. This means that we do not need to worry about $t < 1$.\medskip

Again let $\chi \in C_{0}^{\infty}(\mathbb{R}^{4})$ be a smooth, cutoff function, $\chi(x) = 1$ for $|x| \leq 1$ and $\chi(x) = 0$ for $|x| \geq 2$. Then, split the initial data,
\begin{equation}\label{2.4}
u_{0} = v_{0} + w_{0}, \qquad u_{1} = v_{1} + w_{1},
\end{equation}
where
\begin{equation}\label{2.5}
v_{0} = \chi(\frac{x}{R}) P_{\leq N} u_{0}, \qquad v_{1} = \chi(\frac{x}{R}) P_{\leq N} u_{1},
\end{equation}
for some $0 < R < \infty$ and $0 < N < \infty$. Here $P_{\leq N}$ is the standard Littlewood--Paley projection to frequencies $\leq N$. By the dominated convergence theorem, there exists some $N < \infty$ such that
\begin{equation}\label{2.6}
\| P_{> N} u_{0} \|_{\dot{H}^{1/2}} + \| P_{> N} u_{1} \|_{\dot{H}^{-1/2}} \leq \frac{\epsilon}{2}.
\end{equation}
It is convenient to rescale the initial data so that $N = 1$.\medskip

After rescaling, by the dominated convergence theorem implies that there exists some $R < \infty$ such that
\begin{equation}\label{2.7}
\| \chi(\frac{x}{R}) P_{\leq 1} u_{0} \|_{\dot{H}^{1/2}} + \| \chi(\frac{x}{R}) P_{\leq 1} u_{1} \|_{\dot{H}^{-1/2}} \leq \frac{\epsilon}{2}.
\end{equation}
Therefore, if $\mathcal E(t)$ is the conformal energy of $v$,
\begin{equation}\label{2.8}
\aligned
\mathcal E(t) = \frac{1}{4} \int_{\mathbb{R}^{d}} |(t + |x|) Lv + (d - 1) v|^{2} + |(t - |x|) \underline{L} v + (d - 1) v|^{2} dx \\ + \frac{1}{2} \int (t^{2} + |x|^{2}) |\cancel{\nabla} v|^{2} dx + \frac{d - 1}{4(d + 1)} \int (t^{2} + |x|^{2}) |v|^{\frac{2(d + 1)}{d - 1}} dx,
\endaligned
\end{equation}
where $L = (\partial_{t} + \frac{x}{|x|} \cdot \nabla)$ and $\underline{L} = (\partial_{t} - \frac{x}{|x|} \cdot \nabla)$. Then by direct computation using the Fourier and spatial support of $v_{0}$ and $v_{1}$,
\begin{equation}\label{2.9}
\mathcal E(t)|_{t = 1} \lesssim R^{2} \| v_{0} \|_{\dot{H}^{1}}^{2} + R^{2} \| v_{1} \|_{L^{2}}^{2} + R^{2} \| v_{0} \|_{L^{10/3}}^{10/3} + \| v_{0} \|_{L^{2}}^{2} \lesssim R^{2}(\| v_{0} \|_{\dot{H}^{1/2}}^{2} + \| v_{1} \|_{\dot{H}^{-1/2}}^{2}).
\end{equation}

Now, for any $\sigma > 0$, define
\begin{equation}\label{2.10}
w_{0}^{\sigma} = \chi(\frac{\sigma x}{R}) P_{\leq \frac{1}{\sigma}} w_{0}, \qquad w_{1}^{\sigma} = \chi(\frac{\sigma x}{R}) P_{\leq \frac{1}{\sigma}} w_{1}.
\end{equation}
For any $\sigma > 0$,
\begin{equation}\label{2.11}
\mathcal E(v_{0} + w_{0}^{\sigma}, v_{1} + w_{1}^{\sigma}) < \infty.
\end{equation}
Therefore, by Theorem $\ref{t5.1}$, if $u^{\sigma}$ solves $(\ref{2.2})$ with initial data
\begin{equation}\label{2.12}
u_{0}^{\sigma} = v_{0} + w_{0}^{\sigma}, \qquad u_{1}^{\sigma} = v_{1} + w_{1}^{\sigma},
\end{equation}
then
\begin{equation}\label{2.13}
\| u^{\sigma} \|_{L_{t,x}^{10/3}} \leq C(\sigma, u_{0}, u_{1}) < \infty,
\end{equation}
and therefore $u^{\sigma}$ is globally well-posed and scattering.\medskip

To prove Theorem $\ref{t2.1}$, it suffices to prove $(\ref{2.13})$ holds with a bound that does not depend on $\sigma$, that is,
\begin{equation}\label{2.1}
\| u^{\sigma} \|_{L_{t,x}^{10/3}} \leq C(u_{0}, u_{1}) < \infty.
\end{equation}
 Then, since
\begin{equation}\label{2.14}
\lim_{\sigma \searrow 0} u_{0}^{\sigma} = u_{0}, \qquad \text{in} \qquad \dot{H}^{1/2}, \qquad \text{and} \qquad \lim_{\sigma \searrow 0} u_{1}^{\sigma} = u_{1}, \qquad \text{in} \qquad \dot{H}^{-1/2},
\end{equation}
in that case $(\ref{2.3})$ follows directly from standard perturbation theory. The reason for making the approximation of the initial data in $(\ref{2.12})$ is to guarantee that the solution $u^{\sigma}$ is global and scattering, so that we can make a bootstrap argument. We suppress the $\sigma$'s for the rest of the argument.
\begin{remark}
Please note that it is perfectly fine to prove a bound on $\| u \|_{L_{t,x}^{10/3}}$ that does depend on $R$, since $R$ is fixed as $\sigma \searrow 0$.
\end{remark}

Now decompose $(\ref{2.2})$ into a system of equations,
\begin{equation}\label{2.15}
\aligned
w_{tt} - \Delta w + |w|^{\frac{4}{3}} w &= 0, \qquad w(0, x) = w_{0}, \qquad w_{t}(0, x) = w_{1}, \\
v_{tt} - \Delta v + F &= 0, \qquad F = |u|^{\frac{4}{3}} u - |w|^{\frac{4}{3}} w, \qquad v(0, x) = v_{0}, \qquad v_{t}(0, x) = v_{1}.
\endaligned
\end{equation}
By $(\ref{2.6})$, $(\ref{4.5})$, Theorems $\ref{t4.2}$, $\ref{t4.3}$ and Lemmas $\ref{l4.4}$ and $\ref{l4.5}$,
\begin{equation}\label{2.16}
\| w \|_{L_{t}^{2} L_{x}^{4}(\mathbb{R} \times \mathbb{R}^{4})} + \| w \|_{L_{t,x}^{10/3}(\mathbb{R} \times \mathbb{R}^{4})} + \| |x|^{3/5} w \|_{L_{t}^{2} L_{x}^{10}(\mathbb{R} \times \mathbb{R}^{4})} + \| |x|^{11/10} w \|_{L_{t}^{\infty} L_{x}^{10}(\mathbb{R} \times \mathbb{R}^{4})} + \| |x| w \|_{L_{t,x}^{10}(\mathbb{R} \times \mathbb{R}^{4})} \lesssim \epsilon.
\end{equation}

By direct computation,
\begin{equation}\label{2.16}
\frac{d}{dt} \mathcal E(v) = -\langle (t + |x|) Lv + 3v, (t + |x|) (|u|^{\frac{4}{3}} u - |w|^{\frac{4}{3}} w - |v|^{\frac{4}{3}} v) \rangle - \langle (t - |x|) \underline{L}v + 3v, (t - |x|) (|u|^{\frac{4}{3}} u - |w|^{\frac{4}{3}} w - |v|^{\frac{4}{3}} v) \rangle.
\end{equation}
Also by direct computation,
\begin{equation}\label{2.17}
||u|^{\frac{4}{3}} u - |w|^{\frac{4}{3}} w - |v|^{\frac{4}{3}} v| \lesssim |w| |v| (|v|^{1/3} + |w|^{1/3}).
\end{equation}
Therefore,
\begin{equation}\label{2.19}
\| |x| (|u|^{\frac{4}{3}} u - |v|^{\frac{4}{3}} v - |w|^{\frac{4}{3}} w) \|_{L_{x}^{2}} \lesssim \| |x| w \|_{L_{x}^{10}} \| v \|_{L_{x}^{10/3}}^{4/3} + \| |x| w \|_{L_{x}^{10}} \| w \|_{L_{x}^{10/3}}^{1/3} \| v \|_{L_{x}^{10/3}},
\end{equation}
and
\begin{equation}\label{2.20}
\| t |w| \|_{L_{x}^{10}(|x| > \delta |t|)} \lesssim \frac{1}{\delta} \| |x| w \|_{L_{x}^{10}},
\end{equation}
so
\begin{equation}\label{2.21}
\| |t| ((|u|^{\frac{4}{3}} u - |v|^{\frac{4}{3}} v - |w|^{\frac{4}{3}} w) \|_{L_{x}^{2}(|x| > \delta |t|)} \lesssim \frac{1}{\delta} \| |x| w \|_{L_{x}^{10}} \| v \|_{L_{x}^{10/3}}^{4/3} + \frac{1}{\delta} \| |x| w \|_{L_{x}^{10}}^{1/3} \| w \|_{L_{x}^{10/3}}^{1/3} \| v \|_{L_{x}^{10/3}}.
\end{equation}
Since $\| v \|_{L_{x}^{10/3}}^{10/3} \lesssim \frac{\mathcal E(t)}{t^{2}}$,
\begin{equation}\label{2.22}
\aligned
\frac{d}{dt} \mathcal E(t) = -t \int_{|x| \leq \delta |t|} [ (t + |x|) Lv + 3v ] (|u|^{\frac{4}{3}} u - |w|^{\frac{4}{3}} w - |v|^{\frac{4}{3}} v) dx \\ - t \int_{|x| \leq \delta |t|} [(t - |x|) \underline{L}v + 3v] (|u|^{\frac{4}{3}} u - |w|^{\frac{4}{3}} w - |v|^{\frac{4}{3}} v) dx \\ + \frac{1}{\delta} \frac{\mathcal E(v)^{9/10}}{t^{4/5}} \| |x| w \|_{L_{x}^{10}} + \frac{1}{\delta} \frac{\mathcal E(v)^{4/5}}{t^{3/5}} \| |x| w \|_{L_{x}^{10}} \| w \|_{L_{x}^{10/3}}^{1/3}.
\endaligned
\end{equation}

Now then, by the fundamental theorem of calculus, for $t > \frac{1}{\delta^{1/2}}$,
\begin{equation}\label{2.23}
\mathcal E(t) = \mathcal E(\delta^{1/2} t) + \int_{\delta^{1/2} t}^{t} \frac{d}{d \tau} \mathcal E(\tau) d\tau,
\end{equation}
and for $1 < t < \frac{1}{\delta^{1/2}}$,
\begin{equation}\label{2.23.1}
\mathcal E(t) = \mathcal E(1) + \int_{1}^{t} \frac{d}{d \tau} \mathcal E(\tau) d\tau.
\end{equation}
Now, by a change of variables,
\begin{equation}\label{2.24}
\int_{\delta^{-1/2}}^{\infty} \frac{\mathcal E(\delta^{1/2} t)}{t^{2}} dt = \delta^{1/2} \int_{\delta^{-1/2}}^{\infty} \frac{\delta^{1/2}}{\delta t^{2}} \mathcal E(\delta t) \delta^{1/2} dt = \delta^{1/2} \int_{1}^{\infty} \frac{\mathcal E(t)}{t^{2}} dt,
\end{equation}
and
\begin{equation}\label{2.24.1}
\int_{1}^{\delta^{-1/2}} \frac{\mathcal E(0)}{t^{2}} dt \leq \mathcal E(0).
\end{equation}
Next, by Fubini's theorem, letting $t' = \sup \{ 1, \delta^{1/2} t \}$ to simplify notation,
\begin{equation}\label{2.25}
\aligned
\int_{1}^{\infty} \frac{1}{t^{2}} \int_{t'}^{t} \frac{1}{\delta} \frac{\mathcal E(s)^{9/10}}{s^{4/5}} \| |x| w(s) \|_{L_{x}^{10}} ds dt + \int_{1}^{\infty} \frac{1}{t^{2}} \int_{t'}^{t} \frac{1}{\delta} \frac{\mathcal E(s)^{4/5}}{s^{3/5}} \| |x| w(s) \|_{L_{x}^{10}} \| w \|_{L_{x}^{10/3}}^{1/3} ds dt \\
\lesssim \int_{1}^{\infty} \frac{1}{\delta} \frac{\mathcal E(t)^{9/10}}{t^{9/5}} \| |x| w(t) \|_{L_{x}^{10}} dt + \int_{0}^{\infty} \frac{1}{\delta} \frac{\mathcal E(t)^{4/5}}{t^{8/5}} \| |x| w(t) \|_{L_{x}^{10}} \| w(t) \|_{L_{x}^{10/3}}^{1/3} dt \\
\lesssim \frac{1}{\delta} (\int_{0}^{\infty} \frac{\mathcal E(t)}{t^{2}} dt)^{9/10} \| |x| w \|_{L_{t,x}^{10}} + \frac{1}{\delta} (\int_{0}^{\infty} \frac{\mathcal E(t)}{t^{2}} dt)^{4/5} \| |x| w \|_{L_{t,x}^{10}} \| w \|_{L_{t,x}^{10/3}}^{1/3} \\ \lesssim \frac{\epsilon}{\delta} (\int_{0}^{\infty} \frac{\mathcal E(t)}{t^{2}} dt)^{9/10} + \frac{\epsilon^{4/3}}{\delta} (\int_{0}^{\infty} \frac{\mathcal E(t)}{t^{2}} dt)^{4/5}.
\endaligned
\end{equation}

The terms with $(t + |x|) Lv$ and $(t - |x|) \underline{L} v$ may be handled in exactly the same way, so using $(\ref{2.17})$ and $(\ref{2.22})$, it remains to compute
\begin{equation}\label{2.26}
 \int_{t'}^{t} \int_{|x| \leq \delta |t|} \tau [(\tau + |x|) Lv + 3v] |w| |v|^{4/3} dx d\tau, \qquad \text{and} \qquad \int_{t'}^{t} \int_{|x| \leq \delta |t|} \tau [(\tau + |x|) Lv + 3v] |w|^{4/3} |v| dx d\tau,
\end{equation}
separately.\medskip

It is convenient to replace $\tilde{\mathcal E}(t)$ by
\begin{equation}\label{2.27}
\tilde{\mathcal E}(t) = \sup_{0 \leq s \leq t} \mathcal E(t).
\end{equation}
Note that $\frac{d}{dt} \tilde{\mathcal E}(t)$ is bounded by the right hand side of the absolute value of $(\ref{2.22})$. We abuse notation and let $\mathcal E(t)$ denote $\tilde{\mathcal E}(t)$.\medskip 

Now, let $\mathcal R = \inf \{ (\frac{\mathcal E(t)}{t^{2}} + \frac{t^{2/5} \mathcal E(t)^{3/5}}{t^{2}})^{-1} , \delta |t| \}$. By H{\"o}lder's inequality,
\begin{equation}\label{2.28}
\aligned
t \int_{t'}^{t} \int_{|x| \leq \mathcal R} [(\tau + |x|) Lv + 3v] |w| |v|^{4/3} dx d\tau \\ \lesssim t \| [(\tau + |x|) Lv + 3v \|_{L_{\tau}^{\infty} L_{x}^{2}(|x| \leq \mathcal R)}^{1/2} \| |x|^{-1/6} \{(\tau + |x|) Lv + 3v \} \|_{L_{\tau, x}^{2}(|x| \leq \mathcal R)}^{1/2} \\ \times \| w \|_{L_{\tau}^{2} L_{x}^{4}} \| \frac{1}{|x|^{3/2}} v \|_{L_{\tau, x}^{2}(|x| \leq \mathcal R)}^{1/2} \| |x| v \|_{L_{t,x}^{\infty}(|x| \leq \mathcal R)}^{5/6} \\
\lesssim t \mathcal E(t)^{1/4} \| |x|^{-1/6} \{(\tau + |x|) Lv + 3v \} \|_{L_{\tau, x}^{2}(|x| \leq \mathcal R)}^{1/2} \| w \|_{L_{\tau}^{2} L_{x}^{4}} \| \frac{1}{|x|^{3/2}} v \|_{L_{\tau, x}^{2}(|x| \leq \mathcal R)}^{1/2} \| |x| v \|_{L_{t,x}^{\infty}(|x| \leq \mathcal R)}^{5/6},
\endaligned
\end{equation}
\begin{remark}
All time intervals are $[t', t]$ where $t' = \sup \{ 1, \delta^{1/2} t \}$.
\end{remark}
and
\begin{equation}\label{2.29}
\aligned
t \int_{\delta^{1/2} t}^{t} \int_{\mathcal R \leq |x| \leq \delta |t|} [(\tau + |x|) Lv + 3v] |w| |v|^{4/3} dx d\tau \\ 
\lesssim t \| [(\tau + |x|) Lv + 3v \|_{L_{\tau}^{\infty} L_{x}^{2}(|x| \leq \delta |t|)}^{4/5} \| |x|^{-1} (\tau + |x|) Lv + 3v  \|_{L_{\tau, x}^{2}(|x| \leq \delta |t|)}^{1/5} \| |x|^{3/5} w \|_{L_{\tau}^{2} L_{x}^{10}} \| \frac{1}{|x|^{3/10}} v \|_{L_{\tau, x}^{10/3}(|x| \leq \delta |t|)}^{4/3} \\
\lesssim  t \mathcal E(t)^{2/5} \| |x|^{-1} \{(\tau + |x|) Lv + 3v \}  \|_{L_{\tau, x}^{2}(\mathcal R \leq |x| \leq \delta |t|)}^{1/5} \| |x|^{3/5} w \|_{L_{\tau}^{2} L_{x}^{10}} \| \frac{1}{|x|^{3/10}} v \|_{L_{\tau, x}^{10/3}(|x| \leq \delta |t|)}^{4/3}.
\endaligned
\end{equation}

Next,
\begin{equation}\label{2.30}
\aligned
t \int_{\delta^{1/2} t}^{t} \int_{|x| \leq \delta |t|} [(\tau + |x|) Lv + 3v] |w|^{4/3} |v| dx d\tau \\ \lesssim t \| |x|^{3/5} w \|_{L_{t}^{2} L_{x}^{10}}^{4/3} \| |x|^{-3/2} v \|_{L_{t,x}^{2}}^{2/3} \| (\tau + |x|) Lv + 3v \|_{L_{\tau}^{\infty} L_{x}^{2}} \| |x|^{3/5} v \|_{L_{\tau}^{\infty} L_{x}^{10}}^{1/3} \\
\lesssim t \mathcal E(t)^{1/2} \| |x|^{3/5} w \|_{L_{t}^{2} L_{x}^{10}}^{4/3} \| |x|^{-3/2} v \|_{L_{t,x}^{2}}^{2/3} \| |x| v \|_{L_{\tau, x}^{\infty}(|x| \leq \delta |t|)}^{1/5} \| v \|_{L_{\tau}^{\infty} L_{x}^{4}(|x| \leq \delta |t|)}^{2/15}.
\endaligned
\end{equation}
By the Sobolev embedding theorem and radial Sobolev embedding theorem,
\begin{equation}\label{2.31}
\| |x| v \|_{L_{x}^{\infty}(|x| \leq \delta |t|)}, \| v \|_{L_{x}^{4}(|x| \leq \delta |t|)} \lesssim \| \chi(\frac{x}{\delta |t|}) v \|_{\dot{H}^{1}} \lesssim \| \chi(\frac{x}{\delta |t|}) \nabla v \|_{L^{2}} + \frac{1}{\delta |t|} \| v \|_{L_{x}^{2}(|x| \leq 2 \delta |t|)}.
\end{equation}
Now, for $\delta^{1/2} t \leq \tau \leq t$,
\begin{equation}\label{2.32}
\tau \| \chi(\frac{x}{\delta t}) \nabla_{t,x} v(\tau) \|_{L^{2}} \lesssim \| \chi(\frac{x}{\delta t}) \{ (\tau + |x|) Lv + 3v \} \|_{L^{2}} + \| \chi(\frac{x}{\delta t}) \{ (\tau - |x|) \underline{L} v + 3v \} \|_{L^{2}} + \| \chi(\frac{x}{\delta t}) v \|_{L^{2}}.
\end{equation}
Now, by H{\"o}lder's inequality and the conformal energy,
\begin{equation}\label{2.33}
\| \chi(\frac{x}{\delta t}) v(\tau) \|_{L^{2}} \lesssim (\delta t)^{4/5} \| v \|_{L_{x}^{10/3}} \lesssim (\delta t)^{4/5} \frac{\mathcal E(\tau)^{3/10}} {\tau^{3/5}} \lesssim \delta^{1/2} t^{1/5} \mathcal E(\tau)^{3/10}.
\end{equation}
Plugging $(\ref{2.33})$ into $(\ref{2.32})$,
\begin{equation}\label{2.34}
\tau \| \chi(\frac{x}{\delta t}) \nabla_{t,x} v(\tau) \|_{L^{2}} \lesssim \mathcal E(\tau)^{1/2} + \delta^{1/2} t^{1/5} \mathcal E(\tau)^{3/10}.
\end{equation}
Plugging $(\ref{2.34})$ into $(\ref{2.31})$,
\begin{equation}\label{2.35}
\| |x| v(\tau) \|_{L_{x}^{\infty}(|x| \leq \delta |t|)}, \| v(\tau) \|_{L_{x}^{4}(|x| \leq \delta |t|)} \lesssim \frac{1}{\tau} \mathcal E(\tau)^{1/2} + \frac{\delta^{1/2} t^{1/5} \mathcal E(\tau)^{3/10}}{\tau} + \frac{\delta^{1/2} t^{1/5} \mathcal E(\tau)^{3/10}}{\delta t}.
\end{equation}
Again using the fact that $\delta^{1/2} t \leq \tau \leq t$ and $\mathcal E(t)$ is increasing,
\begin{equation}\label{2.36}
\| |x| v(\tau) \|_{L_{x}^{\infty}(|x| \leq \delta t)}, \| v(\tau) \|_{L_{x}^{4}(|x| \leq \delta t)} \lesssim_{\delta} \frac{\mathcal E(t)^{1/2}}{t} + \frac{t^{1/5} \mathcal E(t)^{3/10}}{t}.
\end{equation}

Next, we utilize the Morawetz estimate in Proposition $\ref{p5.3}$ and the local energy estimate in Proposition $\ref{p5.4}$.
\begin{proposition}\label{p2.2}
For any $T > 1$, $T' = \sup \{ 1, \delta^{1/2} T \}$,
\begin{equation}\label{2.37}
\aligned
\int_{T'}^{T} \int \chi(\frac{x}{\delta T}) [\frac{1}{|x|^{3}} v^{2} + \frac{1}{|x|} |v|^{10/3}] dx dt \lesssim_{\delta} \frac{\mathcal E(T)}{T^{2}} + \frac{T^{2/5} \mathcal E(T)^{3/5}}{T^{2}} \\ + \int_{T'}^{T} \int \chi(\frac{x}{\delta T}) |\nabla v| |v| |w| (|v|^{1/3} + |w|^{1/3}) dx dt + \int_{T'}^{T} \int \chi(\frac{x}{\delta T}) |v|^{2} |w| (|v|^{1/3} + |w|^{1/3}) \frac{1}{|x|} dx dt.
\endaligned
\end{equation}
\end{proposition}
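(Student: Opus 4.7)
The idea is to mimic the Morawetz argument of Proposition~\ref{p5.3} with $v$ in place of $u$, noting that $v$ satisfies $v_{tt} - \Delta v = -F$ with $F = |u|^{4/3}u - |w|^{4/3}w$. Writing $F = |v|^{4/3}v + G$ where $G = |u|^{4/3}u - |w|^{4/3}w - |v|^{4/3}v$, the pointwise bound $(\ref{2.17})$ gives $|G| \lesssim |v||w|(|v|^{1/3} + |w|^{1/3})$. The portion of $F$ equal to $|v|^{4/3}v$, together with the $\Delta v$ term, will produce the good Morawetz integrand $\chi(\frac{x}{\delta T})(v^{2}/|x|^{3} + |v|^{10/3}/|x|)$, while the perturbation $G$ will produce exactly the last two terms on the right-hand side of $(\ref{2.37})$.

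I would start by setting up the analog of $(\ref{5.25})$,
\begin{equation*}
M(t) = \int \chi\Bigl(\frac{r}{\delta T}\Bigr) v_{t} v_{r} r^{3}\, dr + \frac{3}{2} \int \chi\Bigl(\frac{r}{\delta T}\Bigr) v v_{t} r^{2}\, dr.
\end{equation*}
Differentiating in $t$, substituting $v_{tt} = \Delta v - |v|^{4/3}v - G$, and integrating by parts exactly as in $(\ref{5.19})$--$(\ref{5.22})$ (with cutoff error terms handled via $(\ref{5.27})$--$(\ref{5.28})$) produces the positive bulk integrand $\chi\bigl[v^{2}/|x|^{3} + |v|^{10/3}/|x|\bigr]$, together with (i) cutoff errors from $\nabla\chi(x/(\delta T))$ supported on $\delta T \leq |x| \leq 2\delta T$ and controlled exactly as in $(\ref{5.28})$, and (ii) source terms coming from $G$. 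By $(\ref{2.17})$ the latter are bounded by $\int \chi |\nabla v||v||w|(|v|^{1/3}+|w|^{1/3})\,dx$ and $\int \chi |v|^{2}|w|(|v|^{1/3}+|w|^{1/3})/|x|\,dx$, matching the last two terms of $(\ref{2.37})$ after integration in $t$.

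It remains to bound $\sup_{T' \leq t \leq T}|M(t)|$ by $\mathcal{E}(T)/T^{2} + T^{2/5}\mathcal{E}(T)^{3/5}/T^{2}$. Hardy's inequality, as in $(\ref{5.31})$, yields
\begin{equation*}
|M(t)| \lesssim \|\nabla_{t,x} v\|_{L^{2}(|x| \leq 2\delta T)}^{2} + \frac{1}{\delta^{2} T^{2}} \|v\|_{L^{2}(|x| \leq 2\delta T)}^{2}.
\end{equation*}
For $\delta$ sufficiently small that $2\delta T \leq t/2$ throughout $[T',T]$, applying $(\ref{5.23})$ to $v$ gives $\|\nabla_{t,x} v\|_{L^{2}(|x|\leq 2\delta T)}^{2} \lesssim t^{-2}(\mathcal{E}(t) + \|v\|_{L^{2}(|x|\leq t/2)}^{2})$. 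Hölder on the ball $\{|x|\leq t/2\}$ combined with the conformal-energy bound $\int |v|^{10/3}\,dx \lesssim \mathcal{E}(t)/t^{2}$ then gives $\|v\|_{L^{2}(|x|\leq t/2)}^{2} \lesssim t^{8/5}\|v\|_{L^{10/3}}^{2} \lesssim t^{2/5}\mathcal{E}(t)^{3/5}$. Replacing $\mathcal{E}(t)$ by the monotonized version of $(\ref{2.27})$ and using $\delta^{1/2} T \leq t \leq T$ produces the claimed $\lesssim_{\delta}$ bound, and applying the fundamental theorem of calculus to $M$ on $[T',T]$ completes the proof.

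The main obstacle is the clean bookkeeping that isolates the good Morawetz contribution of $|v|^{4/3}v$ from the perturbative $G$-terms while ensuring that the cutoff errors coming from $\nabla\chi$ are absorbed by the same $\mathcal{E}(T)/T^{2}$ and $T^{2/5}\mathcal{E}(T)^{3/5}/T^{2}$ bounds. A secondary subtlety is verifying that the $L^{10/3}$-to-$L^{2}$ Hölder interpolation gives precisely the $T^{2/5}\mathcal{E}(T)^{3/5}/T^{2}$ scaling appearing in $(\ref{2.37})$, which is the only place where a term other than $\mathcal{E}(T)/T^{2}$ enters the estimate.
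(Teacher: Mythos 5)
Your proposal is correct and follows essentially the same route as the paper: rewrite the $v$-equation as $v_{tt} - \Delta v + |v|^{4/3}v + [F - |v|^{4/3}v] = 0$, run the cutoff Morawetz argument of Proposition \ref{p5.3} on $v$, bound the boundary terms $\sup|M(t)|$ through the conformal energy (your use of $(\ref{5.23})$ plus H\"older to $L^{10/3}$ is exactly the computation $(\ref{2.31})$--$(\ref{2.34})$ that the paper cites, producing the $T^{2/5}\mathcal E(T)^{3/5}/T^{2}$ term), and control the remaining forcing via the pointwise bound $(\ref{2.17})$, which yields precisely the two error integrals in $(\ref{2.37})$. No gaps.
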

\begin{proof}
Recalling $(\ref{2.5})$ and the proof of Proposition $\ref{p5.3}$,
\begin{equation}\label{2.38}
v_{tt} - \Delta v + |v|^{\frac{4}{3}} v + [F - |v|^{\frac{4}{3}} v] = 0,
\end{equation}
\begin{equation}\label{2.39}
\aligned
\int_{T'}^{T} \int \chi(\frac{x}{\delta T}) [\frac{1}{|x|^{3}} v^{2} + \frac{1}{|x|} |v|^{10/3}] dx dt \lesssim_{\delta} \sup_{t \in [0, T]} \| \nabla_{t, x} u \|_{L^{2}(|x| \leq 2 \delta T)}^{2} + \sup_{t \in [0, T]} \frac{1}{\delta^{2} T^{2}} \| u \|_{L^{2}(|x| \leq 2 \delta T)}^{2} \\ + \int_{T'}^{T} \int \chi(\frac{x}{\delta T}) |\nabla v| [F - |v|^{\frac{4}{3}} v] dx dt + \int_{T'}^{T} \int \chi(\frac{x}{\delta T}) |v| [F - |v|^{\frac{4}{3}} v] \frac{1}{|x|} dx dt.
\endaligned
\end{equation}
Now then, by $(\ref{2.31})$--$(\ref{2.34})$,
\begin{equation}\label{2.40}
\sup_{t \in [0, T]} \| \nabla_{t, x} u \|_{L^{2}(|x| \leq 2 \delta T)}^{2} + \sup_{t \in [0, T]} \frac{1}{\delta^{2} T^{2}} \| u \|_{L^{2}(|x| \leq 2 \delta T)}^{2} \lesssim_{\delta} \frac{\mathcal E(T)}{T^{2}} + \frac{T^{2/5} \mathcal E(T)^{3/5}}{T^{2}}.
\end{equation}
Finally, plugging in the bounds in $(\ref{2.17})$ to the final two terms in the right hand side of $(\ref{2.39})$ proves the theorem.
\end{proof}

Next we prove a local energy decay estimate.
\begin{proposition}[Local energy decay]\label{p2.3}
For any $T > 1$, $R > 0$, if $T' = \sup \{ 1, \delta^{1/2} T \}$,
\begin{equation}\label{2.50}
\aligned
R^{-1} \int_{T'}^{T} \int_{|x| \leq R} \chi(\frac{x}{\delta T}) [|\nabla v|^{2} + v_{t}^{2}] dx dt \lesssim_{\delta} \frac{\mathcal E(T)}{T^{2}} + \frac{T^{2/5} \mathcal E(T)^{3/5}}{T^{2}} \\ + \int_{T'}^{T} \int \chi(\frac{x}{\delta T}) \psi(\frac{r}{R}) \frac{r}{R} |\nabla v| |v| |w| (|v|^{1/3} + |w|^{1/3}) dx dt \\ + \frac{1}{R} \int_{T'}^{T} \int \chi(\frac{x}{\delta T}) \psi(\frac{r}{R}) |v|^{2} |w| (|v|^{1/3} + |w|^{1/3}) dx dt.
\endaligned
\end{equation}
\end{proposition}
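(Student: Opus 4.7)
The proof will mirror that of Proposition~\ref{p5.4}, but applied to $v$ rather than $u$, using the fact that $v$ satisfies the perturbed equation
\[
v_{tt} - \Delta v + |v|^{4/3} v = -\bigl(F - |v|^{4/3}v\bigr),
\]
where, by \eqref{2.17}, $|F - |v|^{4/3}v| \lesssim |w||v|(|v|^{1/3} + |w|^{1/3})$. I will take the Morawetz potential
\[
M(t) = R^{-1} \int \chi(\tfrac{r}{\delta T}) \psi(\tfrac{r}{R}) v_{t} v_{r} r^{d} dr + \tfrac{d-1}{2} R^{-1} \int \chi(\tfrac{r}{\delta T}) \psi(\tfrac{r}{R}) v_{t} v r^{d-1} dr,
\]
exactly as in \eqref{5.30}, with the same cutoff $\psi$ satisfying $\partial_r(r\psi(r)) = \phi(r) \geq 0$, $\psi \equiv 1$ on $[0,1]$, $\psi(r) = 3/(2r)$ on $[2,\infty)$.

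First I would bound $\sup_{t\in[T',T]} |M(t)|$. The analogue of \eqref{5.31} gives a bound by $\|\nabla_{t,x} v\|_{L^2(|x|\leq 2\delta T)}^2 + (\delta T)^{-2} \|v\|_{L^2(|x|\leq 2\delta T)}^2$, evaluated at times $\tau \in [T',T]$. Using precisely the argument \eqref{2.31}--\eqref{2.34} from the preceding computation (which relies on the definition of $\mathcal E(\tau)$ and the H\"older/conformal-energy bound $\|\chi(\tfrac{x}{\delta t}) v(\tau)\|_{L^2} \lesssim \delta^{1/2} t^{1/5} \mathcal E(\tau)^{3/10}$), this is controlled by $T^{-2} \mathcal E(T) + T^{-2} T^{2/5} \mathcal E(T)^{3/5}$ up to a $\delta$-dependent constant.

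Next I would compute $\frac{d}{dt} M(t)$, writing it as the sum of four terms as in \eqref{5.32} and substituting $v_{tt} = \Delta v - |v|^{4/3} v - (F - |v|^{4/3}v)$. The linear parts, after integrating by parts in $r$, yield (i) a positive contribution $-\tfrac{1}{2}\int \chi(\tfrac{r}{\delta T}) \phi(\tfrac{r}{R}) (v_t^2 + v_r^2) r^{d-1}\, dr$ as in \eqref{5.33}--\eqref{5.34}, which after integration in $t$ and using $\phi \geq c$ on $[0,1]$ dominates (from below) the quantity $R^{-1} \int_{T'}^{T}\int_{|x|\leq R} \chi(\tfrac{x}{\delta T})(|\nabla v|^2 + v_t^2)\,dx\,dt$; (ii) boundary error terms supported on $\delta T \leq |x| \leq 2\delta T$ of type $\frac{1}{\delta T}\int |\nabla_{t,x} v|^2 + \frac{1}{\delta^3 T^3} \int v^2$, which are absorbed into $\mathcal E(T)/T^2 + T^{2/5}\mathcal E(T)^{3/5}/T^2$ exactly as in \eqref{2.40}; and (iii) a lower order term $\frac{1}{R}\int \chi(\tfrac{r}{\delta T})\psi(\tfrac{r}{R}) v^2 r^{d-3}\,dr$, which is dominated (via Proposition~\ref{p2.2} already established) by the right-hand side of \eqref{2.50}. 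The $|v|^{4/3} v$ nonlinear piece is handled by the integration by parts of \eqref{5.37}, which has the good sign and can be discarded.

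Finally, the only genuinely new contribution comes from the source $(F - |v|^{4/3}v)$: it produces two integrals
\[
R^{-1} \int_{T'}^{T} \!\!\int \chi(\tfrac{x}{\delta T}) \psi(\tfrac{r}{R}) (F - |v|^{4/3}v)\, v_r\, r^{d}\, dr\, dt, \qquad R^{-1}\int_{T'}^{T} \!\!\int \chi(\tfrac{x}{\delta T}) \psi(\tfrac{r}{R}) (F - |v|^{4/3}v)\, v\, r^{d-1}\,dr\,dt,
\]
which, using \eqref{2.17} together with $|\psi(r/R) r/R| \lesssim 1$ in the first and $\psi(r/R) \lesssim 1$ with $R^{-1}$ out front in the second, give precisely the two error terms on the right-hand side of \eqref{2.50}. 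The main obstacle is bookkeeping the cutoff error terms and ensuring that the spatial weights $\chi(\tfrac{x}{\delta T})\psi(\tfrac{r}{R}) r/R$ appearing in the nonlinear errors match the form stated; these come out naturally from the $v_r r^d$ weighting of $M(t)$ after one integration by parts in $r$. Fundamental theorem of calculus applied from $T'$ to $T$ then completes the proof.
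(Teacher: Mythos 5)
Your proposal is correct and follows essentially the same route as the paper, which simply runs the proof of Proposition \ref{p5.4} on the equation $(\ref{2.38})$ for $v$, bounds the cross terms by $(\ref{2.17})$, and controls the cutoff/boundary contributions by $(\ref{2.40})$. The only bookkeeping item you leave implicit is the boundary term $\frac{1}{\delta T}\int_{\delta T \leq |x| \leq 2\delta T} |v|^{10/3}\,dx$ produced when the cutoff hits the good-sign nonlinear piece (the analogue of the last term in $(\ref{5.37})$), but this is absorbed into $\frac{\mathcal E(T)}{T^{2}}$ via the same bound $\| v \|_{L_{x}^{10/3}}^{10/3} \lesssim \frac{\mathcal E(t)}{t^{2}}$ you already use.
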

\begin{proof}
As in Proposition $\ref{p2.2}$, the proof follows directly from the proof of Proposition $\ref{p5.4}$, $(\ref{2.17})$, $(\ref{2.38})$, and $(\ref{2.40})$.
\end{proof}

Next, we show that we can actually ignore absorb the error terms in Propositions $\ref{p2.2}$ and $\ref{p2.3}$ into the left hand side.
\begin{proposition}\label{p2.4}
For $T > 1$, $T' = \sup \{ 1, \delta^{1/2} T \}$,
\begin{equation}\label{2.65}
\aligned
\sup_{R > 0} R^{-1} \int_{T'}^{T} \int_{|x| \leq R} \chi(\frac{x}{\delta T}) [|\nabla v|^{2} + v_{t}^{2}] dx dt + \int_{T'}^{T} \int \chi(\frac{x}{\delta T}) [\frac{1}{|x|^{3}} v^{2} + \frac{1}{|x|} |v|^{10/3}] dx dt \lesssim_{\delta} \frac{\mathcal E(T)}{T^{2}} + \frac{T^{2/5} \mathcal E(T)^{3/5}}{T^{2}}.
\endaligned
\end{equation}
\end{proposition}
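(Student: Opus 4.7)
The plan is to combine Propositions \ref{p2.2} and \ref{p2.3} into a single estimate and then absorb all resulting error terms into the left-hand side. Write $\mathcal{M}$ for the left-hand side of $(\ref{2.65})$ and set $\mathcal{N} := \tfrac{\mathcal E(T)}{T^{2}} + \tfrac{T^{2/5}\mathcal E(T)^{3/5}}{T^{2}}$. Taking the supremum over $R > 0$ on the left of $(\ref{2.50})$ and adding the resulting bound to $(\ref{2.37})$ yields $\mathcal{M} \lesssim_{\delta} \mathcal{N} + E_{1} + E_{2}$, where
\begin{equation*}
E_{1} := \int_{T'}^{T}\!\!\int \chi(x/\delta T)\,|\nabla v|\,|v|\,|w|\,(|v|^{1/3}+|w|^{1/3})\,dx\,dt,
\end{equation*}
\begin{equation*}
E_{2} := \int_{T'}^{T}\!\!\int \chi(x/\delta T)\,|v|^{2}\,|w|\,(|v|^{1/3}+|w|^{1/3})\,|x|^{-1}\,dx\,dt.
\end{equation*}
To see that the $R$-dependent errors in $(\ref{2.50})$ reduce (uniformly in $R$) to $E_{1}$ and $E_{2}$, one uses the pointwise bounds $\psi(r/R)(r/R)\lesssim 1$ and $R^{-1}\psi(r/R)\lesssim 1/\max(R,|x|) \leq 1/|x|$ that follow from the explicit form of $\psi$.

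It then suffices to prove $E_{1} + E_{2} \lesssim \epsilon^{\alpha}\,\mathcal{M}$ for some $\alpha > 0$; absorbing this into $\mathcal{M}$, which is legitimate for $\epsilon$ small by $(\ref{2.6})$--$(\ref{2.7})$, gives the proposition. Each of the four summands in $E_{1}+E_{2}$ is controlled by H\"{o}lder's inequality: the $v$-factors are paired with the Morawetz quantities $\|v/|x|^{3/2}\|_{L^{2}_{t,x}(\chi)}$ and $\|\,|v|^{5/3}/|x|^{1/2}\,\|_{L^{2}_{t,x}(\chi)}$, both $\lesssim \mathcal{M}^{1/2}$; for the $|\nabla v|$-factor in $E_{1}$, the dyadic decomposition $\{|x|\leq 2\delta T\}=\bigcup_{k\geq 0}\{R_{k+1}<|x|\leq R_{k}\}$ with $R_{k}=2^{-k}\cdot 2\delta T$, together with the local-energy bound $\|\nabla v\|_{L^{2}_{t,x}(|x|\leq R_{k})}^{2}\lesssim R_{k}\,\mathcal{M}$, provides the necessary $v$-side estimate. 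The $w$-factors are then bounded by the weighted Strichartz norms supplied by $(\ref{2.16})$, chiefly $\|\,|x|^{3/5}w\,\|_{L^{2}_{t}L^{10}_{x}}$ and $\|\,|x|w\,\|_{L^{10}_{t,x}}$, each $\lesssim \epsilon$.

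The main obstacle is the combinatorial bookkeeping required to choose, for each of the four error summands, H\"{o}lder exponents so that the $v$-side of the split is an interpolate of the two Morawetz quantities above (and, in $E_{1}$, the dyadic local-energy norm), while the $w$-side matches an available Strichartz norm in $(\ref{2.16})$. As a representative example, for $\int \chi(x/\delta T)\,|v|^{7/3}|w|/|x|\,dx\,dt$ one writes $|v|^{7/3}/|x| = (|v|^{5/3}/|x|^{1/2})(|v|^{2/3}/|x|^{1/2})$ and applies Cauchy--Schwarz to reduce the bound to $\mathcal{M}^{1/2}\cdot \bigl(\int \chi\,|v|^{4/3}|w|^{2}/|x|\,dx\,dt\bigr)^{1/2}$; the residual integral is then split via a further H\"{o}lder inequality against $\|\,|x|w\,\|_{L^{10}_{t,x}}$ and $\|\,|x|^{3/5}w\,\|_{L^{2}_{t}L^{10}_{x}}$, paired with powers of the Morawetz quantities, to produce the desired $\epsilon$-smallness. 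Once these matched decompositions are carried out for all four summands and the resulting $\epsilon^{\alpha}\mathcal{M}$ contribution is absorbed into the left-hand side, the proof is complete.
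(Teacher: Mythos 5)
Your reduction is the same as the paper's: add Proposition \ref{p2.2} to the supremum over $R$ of Proposition \ref{p2.3}, note $\psi(r/R)\,r/R\lesssim 1$ and $R^{-1}\psi(r/R)\lesssim |x|^{-1}$, and reduce to the two error integrals $E_{1},E_{2}$. The gap is in the claim that it suffices to prove $E_{1}+E_{2}\lesssim \epsilon^{\alpha}\mathcal M$ and that this follows by pairing the $v$-factors \emph{only} with Morawetz/local-energy quantities (i.e.\ pieces of $\mathcal M$) and the $w$-factors with the norms in $(\ref{2.16})$. Take the representative term $\int\chi\,|v|^{7/3}|w|/|x|$. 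To get a bound that is linear in $\mathcal M$ by H\"older, the $v$-side factors $(\chi|v|^{10/3}/|x|)^{a}(\chi|v|^{2}/|x|^{3})^{b}$ must carry total weight $a+b=1$ with $\tfrac{10}{3}a+2b=\tfrac73$, i.e.\ $a=\tfrac14$, $b=\tfrac34$; but then no H\"older weight remains for $w$, which is forced into $\| |x|^{3/2} w\|_{L^{\infty}_{t,x}}$ --- exactly the endpoint radial Sobolev embedding at $\dot H^{1/2}$ that fails logarithmically and is deliberately absent from $(\ref{2.16})$. With any admissible (non-endpoint) $w$-norm the $v$-side weight drops below $1$, and the leftover powers of $v$ are \emph{not} controlled by $\mathcal M$: they must be measured in $L^{\infty}_{t}$-based norms such as $\|v\|_{L^{\infty}_{t}L^{4}_{x}(|x|\le 2\delta T)}$, $\| |x| v\|_{L^{\infty}_{t,x}}$, $\|\nabla v\|_{L^{\infty}_{t}L^{2}_{x}}$. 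Your representative computation shows the same defect quantitatively: since the lowest-degree Morawetz density is quadratic in $v$, the residual integral $\int\chi|v|^{4/3}|w|^{2}/|x|$ can pick up at most $\mathcal M^{2/3}$, so you end with something like $\epsilon\,\mathcal M^{1/2}\mathcal M^{1/3}=\epsilon\,\mathcal M^{5/6}$, which is sublinear in $\mathcal M$ and cannot be absorbed (there is no lower bound on $\mathcal M$, and a Young-type splitting produces an additive constant $\sim\epsilon^{6}$ per scale that is not dominated by $\mathcal E(T)/T^{2}+T^{2/5}\mathcal E(T)^{3/5}/T^{2}$ and would destroy the later bound on $\int \mathcal E(t)t^{-2}\,dt$).

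The missing ingredient is the conformal-energy control of these $L^{\infty}_{t}$ quantities, $(\ref{2.31})$--$(\ref{2.36})$ and $(\ref{2.40})$, which bound $\|v\|_{L^{\infty}_{t}L^{4}_{x}(|x|\le2\delta T)}^{2}$, $\| |x| v\|_{L^{\infty}}^{2}$, etc.\ by $\frac{\mathcal E(T)}{T^{2}}+\frac{T^{2/5}\mathcal E(T)^{3/5}}{T^{2}}$. With these, the paper estimates each error term as in $(\ref{2.66})$--$(\ref{2.71})$ by
$\eta\,\mathcal M + C(\eta)\,\epsilon^{2}\bigl(\frac{\mathcal E(T)}{T^{2}}+\frac{T^{2/5}\mathcal E(T)^{3/5}}{T^{2}}\bigr)$,
absorbing only the $\eta\mathcal M$ part into the left-hand side and letting the remainder land on the right-hand side of $(\ref{2.65})$; for $E_{1}$ it also splits $|x|\le\mathcal R$ versus $|x|>\mathcal R$ with the specific radius $\mathcal R$ from the $(\ref{2.28})$--$(\ref{2.29})$ analysis rather than a plain dyadic decomposition. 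To repair your argument, replace the target estimate $E_{1}+E_{2}\lesssim\epsilon^{\alpha}\mathcal M$ by this weaker $\eta\mathcal M + C(\eta)\epsilon^{2}\mathcal N$ form and invoke $(\ref{2.36})$/$(\ref{2.40})$ for the leftover $v$-factors.
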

\begin{proof}
By Propositions $\ref{p2.2}$ and $\ref{p2.3}$, it only remains to estimate the error terms. First, for any $\eta > 0$, by $(\ref{2.16})$,
\begin{equation}\label{2.66}
\aligned
\int_{T'}^{T} \int \chi(\frac{x}{\delta T}) |v|^{2} |w| (|v|^{1/3} + |w|^{1/3}) \frac{1}{|x|} dx dt \\ \lesssim (\int \int \chi(\frac{x}{\delta T}) \frac{1}{|x|} |v|^{10/3})^{1/4} (\int \int \chi(\frac{x}{\delta T}) \frac{1}{|x|^{3}} |v|^{2})^{1/4} \| w \|_{L_{t}^{2} L_{x}^{4}} \| \chi(\frac{x}{\delta T})^{1/2} v \|_{L_{t}^{\infty} L_{x}^{4}} \\ \lesssim \eta (\int_{T'}^{T} \int \chi(\frac{x}{\delta T}) |v|^{10/3} dx dt) + \eta(\int_{T'}^{T} \int \chi(\frac{x}{\delta T}) \frac{1}{|x|^{3}} v^{2} dx dt) + C(\eta) \epsilon^{2} \| v \|_{L_{t}^{\infty} L_{x}^{4}(|x| \leq 2 \delta T)}^{2}.
\endaligned
\end{equation}
For $\eta(\delta) \ll 1$ the first two terms in $(\ref{2.66})$ can be absorbed into the left hand side of $(\ref{2.65})$. Now, by $(\ref{2.36})$,
\begin{equation}\label{2.67}
C(\eta) \| v \|_{L_{t}^{\infty} L_{x}^{4}(|x| \leq 2 \delta T)}^{2} \| w \|_{L_{t}^{2} L_{x}^{4}}^{2} \lesssim C(\eta) \epsilon^{2} (\frac{\mathcal E(T)}{T^{2}} + \frac{T^{2/5} \mathcal E(T)^{3/5}}{T^{2}}).
\end{equation}

Next, for any $R > 0$,
\begin{equation}\label{2.68}
\frac{1}{R} \psi(\frac{r}{R}) \lesssim \frac{1}{r},
\end{equation}
so as in $(\ref{2.66})$,
\begin{equation}\label{2.69}
\aligned
\sup_{R > 0} \frac{1}{R} \int_{T'}^{T} \int \chi(\frac{x}{\delta T}) \psi(\frac{r}{R}) |v|^{2} |w| (|v|^{1/3} + |w|^{1/3}) dx dt \\
\lesssim \eta (\int_{T'}^{T} \int \chi(\frac{x}{\delta T}) |v|^{10/3} dx dt) + \eta(\int_{T'}^{T} \int \frac{1}{|x|^{3}} v^{2} dx dt) + C(\eta) \| v \|_{L_{t}^{\infty} L_{x}^{4}(|x| \leq 2 \delta T)}^{2} \| w \|_{L_{t}^{2} L_{x}^{4}}^{2}.
\endaligned
\end{equation}

Next, using $(\ref{2.28})$--$(\ref{2.30})$ and splitting into the cases $|x| \leq \mathcal R$ and $|x| > \mathcal R$ separately,
\begin{equation}\label{2.70}
\aligned
\int_{T'}^{T} \int \chi(\frac{x}{\delta T}) |\nabla v| |v| |w|(|v|^{1/3} + |w|^{1/3}) dx dt \\
\lesssim \| \chi(\frac{x}{\delta T})^{1/2} \nabla v \|_{L_{t}^{\infty} L_{x}^{2}}^{1/2} \mathcal R^{1/6} (\sup_{R > 0} R^{-1/2} \| \chi(\frac{x}{\delta T})^{1/2} \nabla v \|_{L_{t,x}^{2}(|x| \leq R)})^{1/2}  \| w \|_{L_{t}^{2} L_{x}^{4}} \\ \times \| \chi(\frac{x}{\delta T})^{1/2} \frac{1}{|x|^{3/2}} v \|_{L_{t,x}^{2}}^{1/2} \| \chi(\frac{x}{\delta T})^{1/4} |x|^{5/6} v^{5/6} \|_{L_{t,x}^{\infty}} \\
+ \| \chi(\frac{x}{\delta T})^{1/2} \nabla v \|_{L_{t}^{\infty} L_{x}^{2}}^{4/5} \mathcal R^{-1/10} (\sup_{R > 0} R^{-1/2} \| \chi(\frac{x}{\delta T})^{1/2} \nabla v \|_{L_{t,x}^{2}(|x| \leq R)})^{1/5} \| |x|^{3/5} w \|_{L_{t}^{2} L_{x}^{10}} \| \chi(\frac{x}{\delta T})^{3/10} \frac{1}{|x|^{3/10}} v \|_{L_{t,x}^{10/3}}^{4/3} \\
+ \| \nabla v \|_{L_{t}^{\infty} L_{x}^{2}(|x| \leq 2 \delta T)} \| \chi(\frac{x}{\delta T})^{1/2} \frac{1}{|x|^{3/2}} v \|_{L_{t,x}^{2}}^{2/3} \| |x|^{3/5} w \|_{L_{t}^{2} L_{x}^{10}}^{4/3} \| |x|^{3/5} v \|_{L_{t}^{\infty} L_{x}^{10}}^{1/3}
\endaligned
\end{equation}
\begin{equation}\label{2.71}
\aligned
\lesssim \eta (\sup_{R > 0} R^{-1} \| \chi(\frac{x}{\delta T})^{1/2} \nabla v \|_{L_{t,x}^{2}}^{2}) + \eta(\int_{T'}^{T} \int \chi(\frac{x}{\delta T}) \frac{1}{|x|^{3}} |v|^{2} dx dt) \\ + \eta (\int_{T'}^{T} \int \chi(\frac{x}{\delta T}) \frac{1}{|x|} |v|^{10/3} dx dt) + C(\eta) \epsilon^{2} (\frac{\mathcal E(T)}{T^{2}} + \frac{T^{2/5} \mathcal E(T)^{3/5}}{T^{2}}).
\endaligned
\end{equation}
For $\eta(\delta) > 0$ sufficiently small, we can absorb the first three terms of $(\ref{2.71})$ into the left hand side of $(\ref{2.65})$, which implies that $(\ref{2.71})$ is controlled by the right hand side of $(\ref{2.65})$.\medskip

Finally, since $\psi(\frac{r}{R}) \frac{r}{R} \lesssim 1$, so
\begin{equation}\label{2.72}
\sup_{R > 0} \frac{1}{R} \int_{\delta^{1/2} T}^{T} \int \chi(\frac{x}{\delta T}) \psi(\frac{r}{R}) |v|^{2} |w| (|v|^{1/3} + |w|^{1/3}) r dx dt \lesssim (\ref{2.70}),
\end{equation}
which proves the theorem.
\end{proof}

The above computations may also be used to estimate $(\ref{2.28})$--$(\ref{2.30})$. First, by Proposition $\ref{p2.3}$,
\begin{equation}\label{2.47}
(\ref{2.30}) \lesssim_{\delta} \mathcal E(t) \| |x|^{3/5} w \|_{L_{t}^{2} L_{x}^{10}([\delta^{1/2} t, t] \times \mathbb{R}^{4})} + t^{1/5} \mathcal E(t)^{4/5} \| |x|^{3/5} w \|_{L_{t}^{2} L_{x}^{10}([\delta^{1/2} t, t] \times \mathbb{R}^{4})}.
\end{equation}
Multiplying this term by $\frac{1}{t^{2}}$ and integrating in time, choosing $\epsilon \ll \delta$, by $(\ref{2.16})$,
\begin{equation}\label{2.48}
\int_{1}^{\infty} C(\delta) \frac{\mathcal E(t)}{t^{2}} \| |x|^{3/5} w \|_{L_{t}^{2} L_{x}^{10}([\delta^{1/2} t, \delta t] \times \mathbb{R}^{4})} \lesssim C(\delta) \epsilon \int_{1}^{\infty} \frac{\mathcal E(t)}{t^{2}} dt \ll \int_{0}^{\infty} \frac{\mathcal E(t)}{t^{2}} dt.
\end{equation}
Also,
\begin{equation}\label{2.49}
\aligned
\int_{1}^{\infty} C(\delta) \frac{\mathcal E(t)^{4/5}}{t^{9/5}} \| |x|^{3/5} w \|_{L_{t}^{2} L_{x}^{10}([\delta^{1/2} t, \delta t] \times \mathbb{R}^{4})} dt \\ \lesssim C(\delta) (\int_{1}^{\infty} \frac{\mathcal E(t)}{t^{2}} dt)^{4/5} (\int_{1}^{\infty} \frac{1}{t} \| |x|^{3/5} w \|_{L_{t}^{2} L_{x}^{10}([\delta^{1/2} t, t] \times \mathbb{R}^{4})}^{5} dt)^{1/5} \\
\lesssim \epsilon C(\delta) (\int_{1}^{\infty} \frac{\mathcal E(t)}{t^{2}} dt)^{4/5} \ll \int_{1}^{\infty} \frac{\mathcal E(t)}{t^{2}} dt + 1.
\endaligned
\end{equation}

Turning now to $(\ref{2.28})$ and $(\ref{2.29})$,
\begin{equation}\label{2.61}
\aligned
t \mathcal E(t)^{1/4} \| |x|^{-1/6} \{(\tau + |x|) Lv + 3v \} \|_{L_{\tau, x}^{2}(|x| \leq \mathcal R)}^{1/2} \| w \|_{L_{\tau}^{2} L_{x}^{4}} \| \frac{1}{|x|^{3/2}} v \|_{L_{\tau, x}^{2}(|x| \leq \mathcal R)}^{1/2} \| |x| v \|_{L_{t,x}^{\infty}(|x| \leq \mathcal R)}^{5/6} \\
\lesssim_{\delta} t^{3/2} \mathcal E(t)^{1/4} \mathcal R^{1/6} (\frac{\mathcal E(t)}{t^{2}} + \frac{t^{2/5} \mathcal E(t)^{3/5}}{t^{2}})^{11/12} \| w \|_{L_{t}^{2} L_{x}^{4}}  \\ = t^{3/2} \mathcal E(t)^{1/4}  (\frac{\mathcal E(t)}{t^{2}} + \frac{t^{2/5} \mathcal E(t)^{3/5}}{t^{2}})^{3/4} \| w \|_{L_{t}^{2} L_{x}^{4}}
\lesssim (\mathcal E(t) + t^{2/5} \mathcal E(t)^{3/5}) \| w \|_{L_{t}^{2} L_{x}^{4}}.
\endaligned
\end{equation}
Now then,
\begin{equation}\label{2.62}
\aligned
C(\delta) \int_{0}^{\infty} \frac{\mathcal E(t)}{t^{2}} \| w \|_{L_{t}^{2} L_{x}^{4}} dt + C(\delta) \int_{0}^{\infty} \frac{\mathcal E(t)^{3/5}}{t^{8/5}} \| w \|_{L_{t}^{2} L_{x}^{4}([\delta^{1/2} t, t] \times \mathbb{R}^{4})} dt \\
\lesssim C(\delta) \epsilon \int_{0}^{\infty} \frac{\mathcal E(t)}{t^{2}} dt + C(\delta) (\int_{0}^{\infty} \frac{\mathcal E(t)}{t^{2}} dt)^{3/5} (\int_{0}^{\infty} \frac{1}{t} \| w \|_{L_{t}^{2} L_{x}^{4}([\delta^{1/2} t, t] \times \mathbb{R}^{4})}^{5/2} dt)^{2/5} \\
\ll \int_{0}^{\infty} \frac{\mathcal E(t)}{t^{2}} dt + 1.
\endaligned
\end{equation}

\begin{equation}\label{2.63}
\aligned
t \mathcal E(t)^{2/5} \| |x|^{-1} \{(\tau + |x|) Lv + 3v \}  \|_{L_{\tau, x}^{2}(\mathcal R \leq |x| \leq \delta |t|)}^{1/5} \| |x|^{3/5} w \|_{L_{\tau}^{2} L_{x}^{10}} \| \frac{1}{|x|^{3/10}} v \|_{L_{\tau, x}^{10/3}(|x| \leq \delta |t|)}^{4/3} \\
 \lesssim_{\delta} t^{6/5} \mathcal R^{-1/10} \mathcal E(t)^{2/5} (\frac{\mathcal E(t)}{t^{2}} + \frac{t^{2/5} \mathcal E(t)^{3/5}}{t^{2}})^{1/2} \| |x|^{3/5} w \|_{L_{t}^{2} L_{x}^{10}} \lesssim (\mathcal E(t) + t^{1/5} \mathcal E(t)^{4/5}) \| |x|^{3/5} w \|_{L_{t}^{2} L_{x}^{10}}.
 \endaligned
\end{equation}
Then,
\begin{equation}\label{2.64}
\aligned
C(\delta) \int_{0}^{\infty} \frac{\mathcal E(t)}{t^{2}} \| w \|_{L_{\tau}^{2} L_{x}^{4}([\delta^{1/2} t, t] \times \mathbb{R}^{4})} dt + C(\delta) \int_{0}^{\infty} \frac{t^{1/5} \mathcal E(t)^{4/5}}{t^{2}} \| w \|_{L_{\tau}^{2} L_{x}^{4}([\delta^{1/2} t, t] \times \mathbb{R}^{4})} dt \\
\lesssim C(\delta) \epsilon \int_{0}^{\infty} \frac{\mathcal E(t)}{t^{2}} dt + C(\delta) (\int_{0}^{\infty} \frac{\mathcal E(t)}{t^{2}} dt)^{4/5} (\int_{0}^{\infty} \frac{1}{t} \| |x|^{3/5} w \|_{L_{\tau}^{2} L_{x}^{10}([\delta^{1/2} t, t] \times \mathbb{R}^{4})}^{5} dt)^{1/5}
\ll \int_{0}^{\infty} \frac{\mathcal E(t)}{t^{2}} dt + 1.
\endaligned
\end{equation}

Therefore, combining $(\ref{2.61})$--$(\ref{2.64})$ with $(\ref{2.23})$--$(\ref{2.25})$ implies
\begin{equation}\label{2.65}
\int_{1}^{\infty} \frac{\mathcal E(t)}{t^{2}} dt \leq \mathcal E(1) + \eta \int_{1}^{\infty} \frac{\mathcal E(t)}{t^{2}} dt,
\end{equation}
for some $0 < \eta \ll 1$. Observe that $\mathcal E(1)$ depends on $R$, and $R(\epsilon) \gg 1$ for $\epsilon \ll 1$, but crucially the norm in $(\ref{2.65})$ does not depend on the $\sigma > 0$ in $(\ref{2.11})$.
\end{proof}

\section{A modified small data argument}
Extending the argument for the $d = 4$ case to dimensions $d > 4$ has a number of technical complications due to the low power of the nonlinearity $|u|^{\frac{4}{d - 1}} u$ when $d > 4$.\medskip

Indeed, for $d > 4$, if $u = v + w$,
\begin{equation}\label{7.1}
|u|^{\frac{4}{d - 1}} u - |v|^{\frac{4}{d - 1}} v - |w|^{\frac{4}{d - 1}} w \lesssim \inf \{ |w| |v|^{\frac{4}{d - 1}}, |v| |w|^{\frac{4}{d - 1}} \}.
\end{equation}
Therefore, when computing the first two lines of $(\ref{2.22})$ for $d > 5$, if we place $w \in L_{t}^{2} X$, where $X$ is some weighted $L^{p}$ space and $v \in L_{t}^{2} Y$, where $Y$ is some weighted $L^{q}$ space, we would need to place $(t - |x|) Lv \in L_{t}^{r} Z$ for some $r < \infty$ and $Z$ another weighted Banach space.\medskip

The obvious candidate for this would be to use the local energy decay estimate in $(\ref{2.50})$. However, $(\ref{2.50})$ does not quite give a bound on a weighted $L^{2}$ space. Instead, $(\ref{2.50})$ only implies
\begin{equation}\label{7.2}
\| |x|^{-1/2} \{ (t - |x|) Lv \} \|_{L_{t,x}^{2}(\frac{1}{T} \leq |x| \leq \delta T)}^{2} \lesssim \ln(T) (\frac{\mathcal E(T)}{T^{2}} + \frac{T^{2/5} \mathcal E(T)^{3/5}}{T^{2}}).
\end{equation}
\begin{remark}
In dimensions $d > 4$, $T^{2/5} \mathcal E(T)^{3/5}$ will be $T^{\alpha(d)} \mathcal E(T)^{1 - \alpha(d)}$ for some $\alpha(d) \searrow 0$ as $d \rightarrow \infty$, but this is not too important to the discussion right now.
\end{remark}

To work around the logarithmic divergence, we would like to use an argument similar to the argument in $(\ref{2.28})$ and $(\ref{2.29})$, namely to set
\begin{equation}\label{7.3}
\mathcal R = (\frac{\mathcal E(T)}{T^{2}} + \frac{T^{\alpha} \mathcal E(T)^{1 - \alpha}}{T^{2}})^{-1},
\end{equation}
and consider the cases $|x| \leq \mathcal R$ and $|x| > \mathcal R$ separately.\medskip

When $|x| > \mathcal R$, the computations are pretty similar to the $d = 4$ case. Taking $|w| |v|^{\frac{4}{d - 1}}$ in $(\ref{7.1})$ gives $(\frac{\mathcal E(T)}{T^{2}} + \frac{T^{\alpha} \mathcal E(T)^{1 - \alpha}}{T^{2}})^{\beta}$ for some $0 < \beta < 1$ along with $\mathcal R^{\beta - 1}$, and then we can proceed as in the $d = 4$ case. However, for $|x| \leq \mathcal R$, by $(\ref{7.1})$ we at most have a second order power of $v$. Therefore, in that case we cannot copy the analysis for $(\ref{2.28})$ and obtain $\mathcal (\frac{\mathcal E(T)}{T^{2}} + \frac{T^{\alpha} \mathcal E(T)^{1 - \alpha}}{T^{2}})^{1 + \beta'} R^{\beta'}$ for some $\beta' > 0$.
\begin{remark}
Observe for example that the computations in $(\ref{2.61})$ relied very heavily on the fact that the error term considered was of the form $|v|^{\frac{4}{3}} |w|$ and $\frac{4}{3} > 1$.
\end{remark}

What comes to the rescue is that, since $w$ is a solution to the small data problem, $v$ should usually be larger than $w$, and when it is not, we can put that part with the equation for $w$ at minimal cost. Instead split $u = v + w$, where
\begin{equation}\label{7.4}
\aligned
v_{tt} - \Delta v + (1 - \chi(\frac{u}{w})) |u|^{\frac{4}{d - 1}} u &= 0, \qquad v(0, x) = v_{0}, \qquad v_{t}(0, x) = v_{1}, \\
w_{tt} - \Delta w + \chi(\frac{u}{w}) |u|^{\frac{4}{d - 1}} u &= 0, \qquad w(0, x) = w_{0}, \qquad w_{t}(0, x) = w_{1}, \\
\endaligned
\end{equation}
$(v_{0}, v_{1})$ and $(w_{0}, w_{1})$ satisfy $(\ref{2.4})$--$(\ref{2.7})$, and $\chi \in C_{0}^{\infty}(\mathbb{R})$, $\chi(x) = 1$ for $|x| \leq 3$ and $\chi(x) = 0$ for $|x| > 6$.

\begin{theorem}[Small data result]\label{t7.1}
The initial value problem
\begin{equation}\label{7.5}
w_{tt} - \Delta w + \chi(\frac{u}{w}) |u|^{\frac{4}{d - 1}} u = 0, \qquad w(0, x) = w_{0}, \qquad w_{t}(0, x) = w_{1},
\end{equation}
is globally well-posed and scattering. Moreover,
\begin{equation}\label{7.6}
\| w \|_{L_{t,x}^{\frac{2(d + 1)}{d - 1}}(\mathbb{R} \times \mathbb{R}^{d})} + \| w \|_{L_{t}^{2} L_{x}^{\frac{2d}{d - 2}}(\mathbb{R} \times \mathbb{R}^{d})} \lesssim \epsilon,
\end{equation}
and for any $0 < \theta \leq 1$,
\begin{equation}\label{7.7}
\| |x|^{\frac{d - 2}{2} (1 - \theta)} w \|_{L_{t}^{2} L_{x}^{\frac{2d}{(d - 2) \theta}}(\mathbb{R} \times \mathbb{R}^{d})} \lesssim_{\theta} \epsilon.
\end{equation}
\end{theorem}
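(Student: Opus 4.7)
The plan is to exploit the structural feature of the cutoff: on the support of $\chi(u/w)$ we have $|u| \leq 6 |w|$ pointwise, so
\[
\bigl|\chi(u/w)\,|u|^{\frac{4}{d-1}}\,u\bigr| \;\leq\; C_{0}\,|w|^{\frac{d+3}{d-1}},
\]
regardless of the size of $v$. This pointwise inequality converts the modified equation $(\ref{7.5})$ into an effective small--data problem for $w$ alone, with a nonlinearity whose magnitude is controlled by the same power-type expression in $w$ that drives the unmodified equation. Crucially, the exponent $\frac{d+3}{d-1}$ is exactly the one adapted to the conformal Strichartz pair, since $\|w^{(d+3)/(d-1)}\|_{L^{2(d+1)/(d+3)}_{t,x}} = \|w\|_{L^{2(d+1)/(d-1)}_{t,x}}^{(d+3)/(d-1)}$.

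Next, I would run Picard iteration exactly as in the proof of Theorem $\ref{t4.2}$. Define $w^{(0)}(t) = \cos(t\sqrt{-\Delta})w_{0} + \frac{\sin(t\sqrt{-\Delta})}{\sqrt{-\Delta}}w_{1}$ and, at each stage, use the full $u^{(n)}=v+w^{(n)}$ from the coupled system $(\ref{7.4})$ inside the nonlinearity for $w^{(n+1)}$. The conformal Strichartz estimate $(\ref{4.5})$ together with the pointwise bound above gives
\[
\|w^{(n+1)}\|_{L^{\frac{2(d+1)}{d-1}}_{t,x}} \;\lesssim\; \|w_{0}\|_{\dot H^{1/2}} + \|w_{1}\|_{\dot H^{-1/2}} + C_{0}\,\|w^{(n)}\|_{L^{\frac{2(d+1)}{d-1}}_{t,x}}^{\frac{d+3}{d-1}}.
\]
Provided $\epsilon$ is chosen sufficiently small compared to $C_{0}$ and the Strichartz constant, the standard continuity argument closes the bootstrap and a contraction estimate on differences $w^{(n+1)} - w^{(n)}$ (using $|\chi(u/w)|u|^{4/(d-1)}u - \chi(u'/w')|u'|^{4/(d-1)}u'| \lesssim (|w|+|w'|)^{\frac{4}{d-1}}|w-w'|$ together with the Lipschitz structure of $\chi$, both deducible from the chain rule and mean-value theorem) yields convergence. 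This establishes global existence, scattering, and the first bound in $(\ref{7.6})$.

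For the remaining estimates I would exploit Theorem $\ref{t4.3}$ and Lemma $\ref{l4.4}$. Applied to the linear evolution of $(w_{0},w_{1})$, they immediately give the bounds $\|w^{(0)}\|_{L^{2}_{t}L^{2d/(d-2)}_{x}} \lesssim \epsilon$ and the weighted $(\ref{7.7})$-type bound with $w$ replaced by $w^{(0)}$. To pass these to the full solution $w$, one writes the Duhamel term, notes that $K(t,\tau) = \frac{\sin((t-\tau)\sqrt{-\Delta})}{\sqrt{-\Delta}}$ is an admissible kernel, and invokes the Christ--Kiselev lemma (Lemma $\ref{l4.5}$) together with the already-established $L^{\frac{2(d+1)}{d-1}}_{t,x}$ smallness of $w$ and the nonlinearity bound $|w|^{(d+3)/(d-1)}$ in the dual conformal Strichartz space. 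This produces $\|w\|_{L^{2}_{t}L^{2d/(d-2)}_{x}} \lesssim \epsilon$ and the full range of weighted estimates in $(\ref{7.7})$ by interpolation, as in the proof of Lemma $\ref{l4.4}$.

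The only real obstacle is the Lipschitz behaviour of $\chi(u/w)$ near $w = 0$, since dividing by $w$ is singular there. However, this is illusory: the composite $\chi(u/w)|u|^{4/(d-1)}u$ is globally Lipschitz in $u$ and $w$ as a map into pointwise bounds, because on the support of $\chi$ we can always replace $u/w$ by the truncated expression and estimate using $|u|\leq 6|w|$; outside the support of $\chi$ the nonlinearity vanishes identically. So the difference estimates are legitimate, and the contraction closes. Everything else is standard Strichartz bookkeeping, with no new analytic input beyond the pointwise domination that makes this a bona fide small-data problem.
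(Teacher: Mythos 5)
Your proposal is essentially the paper's own argument: the pointwise domination $|u|\leq 6|w|$ on the support of $\chi(u/w)$, Picard iteration closed with the conformal Strichartz estimate $(\ref{4.5})$, a difference bound of the form $|w^{(n)}-w^{(n-1)}|\,(|w^{(n)}|^{\frac{4}{d-1}}+|w^{(n-1)}|^{\frac{4}{d-1}})$ for the cutoff nonlinearity, and then Theorem $\ref{t4.3}$ with Lemmas $\ref{l4.4}$ and $\ref{l4.5}$ to upgrade to $(\ref{7.6})$--$(\ref{7.7})$. Two small points of looseness: in the paper $u$ is the already-constructed (smooth, global) solution and is held \emph{fixed} throughout the iteration, with only the $w$ inside $\chi(u/w^{(n-1)})$ updated --- iterating ``$u^{(n)}=v+w^{(n)}$'' is circular, since $v$ is only defined a posteriori as $u-w$; and the claim that the composite is ``globally Lipschitz'' near $w=0$ is not literally correct as stated --- the correct justification is the paper's two-case argument, treating $|w^{(n)}-w^{(n-1)}|\gtrsim|w^{(n)}|+|w^{(n-1)}|$ directly and otherwise using the fundamental theorem of calculus along the segment, where the support of $\chi'$ keeps the denominator comparable to $|u|$. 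Neither issue changes the substance: your displayed difference estimate is exactly the paper's $(\ref{7.13})$, and with $u$ fixed the scheme and bounds go through verbatim.
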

\begin{proof}
First note that by the approximation analysis in $(\ref{2.10})$--$(\ref{2.14})$ and persistence of regularity, $u$ and $w$ are smooth, so $\chi(\frac{u}{w^{(n)}})$ is well-defined.

Define the Picard iteration scheme
\begin{equation}\label{7.8}
w^{(0)}(t) = \cos(t \sqrt{-\Delta}) w_{0} + \frac{\sin(t \sqrt{-\Delta})}{\sqrt{-\Delta}} w_{1},
\end{equation}
and for $n \geq 1$,
\begin{equation}\label{7.9}
w^{(n)}(t) = w^{(0)}(t) - \int_{0}^{t} \frac{\sin((t - \tau) \sqrt{-\Delta}}{\sqrt{-\Delta}} \chi(\frac{u}{w^{(n - 1)}}) |u(\tau)|^{\frac{4}{d - 1}} u(\tau) d\tau.
\end{equation}
First, since $|\frac{u}{w^{(n - 1)}}| \leq 6$ on the support of $\chi(\frac{u}{w^{(n - 1)}})$,
\begin{equation}\label{7.10}
\| w^{(n)} \|_{L_{t,x}^{\frac{2(d + 1)}{d - 1}}(\mathbb{R} \times \mathbb{R}^{d})} \lesssim \epsilon + \| w^{(n - 1)} \|_{L_{t,x}^{\frac{2(d + 1)}{d - 1}}(\mathbb{R} \times \mathbb{R}^{d})}^{1 + \frac{4}{d - 1}}.
\end{equation}
Therefore, for $\epsilon > 0$ sufficiently small,
\begin{equation}\label{7.11}
\| w^{(n)} \|_{L_{t,x}^{\frac{2(d + 1)}{d - 1}}(\mathbb{R} \times \mathbb{R}^{d})} \lesssim \epsilon.
\end{equation}

Next,
\begin{equation}\label{7.12}
w^{(n + 1)}(t) - w^{(n)}(t) = \int_{0}^{t} \frac{\sin((t - \tau) \sqrt{-\Delta}}{\sqrt{-\Delta}} [\chi(\frac{u}{w^{(n)}}) - \chi(\frac{u}{w^{(n - 1)}})] |u(\tau)|^{\frac{4}{d - 1}} u(\tau) d\tau.
\end{equation}
We show
\begin{equation}\label{7.13}
[\chi(\frac{u}{w^{(n)}}) - \chi(\frac{u}{w^{(n - 1)}}] |u(\tau)|^{\frac{4}{d - 1}} u(\tau) \lesssim |w^{(n)} - w^{(n - 1)}|  (|w^{(n)}|^{\frac{4}{d - 1}} + |w^{(n - 1)}|^{\frac{4}{d - 1}}).
\end{equation}
Indeed, when $|w^{(n)} - w^{(n - 1)}| \gtrsim |w^{(n)}| + |w^{(n - 1)}|$,
\begin{equation}\label{7.14}
[\chi(\frac{u}{w^{(n)}}) - \chi(\frac{u}{w^{(n - 1)}})] |u(\tau)|^{\frac{4}{d - 1}} u(\tau) \lesssim |w^{(n)}|^{1 + \frac{4}{d - 1}} + |w^{(n - 1)}|^{1 + \frac{4}{d - 1}} \lesssim |w^{(n)} - w^{(n - 1)}| (|w^{(n)}|^{\frac{4}{d - 1}} + |w^{(n - 1)}|^{\frac{4}{d - 1}}).
\end{equation}
For $|w^{(n)} - w^{(n - 1)}| \ll |w^{(n)}| + |w^{(n - 1)}|$, by the fundamental theorem of calculus,
\begin{equation}\label{7.15}
\aligned
\chi(\frac{u}{w^{(n)}}) - \chi(\frac{u}{w^{(n - 1)}}) = \int_{0}^{1} \frac{d}{d\tau} \chi(\frac{u}{\tau w^{(n)} + (1 - \tau) w^{(n - 1)}}) d\tau \\ = -\int_{0}^{1} \chi'(\frac{u}{\tau w^{(n)} + (1 - \tau) w^{(n - 1)}}) \frac{u}{(\tau w^{(n)} + (1 - \tau) w^{(n - 1)})^{2}} \cdot (w^{(n)} - w^{(n - 1)}) d\tau.
\endaligned
\end{equation}
By the support properties of $\chi$,
\begin{equation}\label{7.16}
\chi'(\frac{u}{\tau w^{(n)} + (1 - \tau) w^{(n - 1)}}) \frac{u}{(\tau w^{(n)} + (1 - \tau) w^{(n - 1)})^{2}} \lesssim \frac{1}{u},
\end{equation}
so $(\ref{7.13})$ also holds. Therefore, by $(\ref{7.12})$,
\begin{equation}\label{7.17}
\| w^{(n + 1)} - w^{(n)} \|_{L_{t,x}^{\frac{2(d + 1)}{d - 1}}(\mathbb{R} \times \mathbb{R}^{d})} \lesssim \epsilon^{\frac{4}{d - 1}} \| w^{(n)} - w^{(n - 1)} \|_{L_{t,x}^{\frac{2(d + 1)}{d - 1}}(\mathbb{R} \times \mathbb{R}^{d})},
\end{equation}
which by the contraction mapping principle implies that $w^{(n)}$ converges in $L_{t,x}^{\frac{2(d + 1)}{d - 1}}$. By Theorem $\ref{t4.3}$ and Lemmas $\ref{l4.4}$ and $\ref{l4.5}$, $(\ref{7.6})$ and $(\ref{7.7})$ hold.
\end{proof}

\section{Scattering when $d > 4$}
Now we are ready to prove scattering when $d > 4$.
\begin{theorem}\label{t3.1}
If $d > 4$ and $u$ is a solution to the conformal wave equation,
\begin{equation}\label{3.1}
u_{tt} - \Delta u + |u|^{\frac{4}{d - 1}} u = 0, \qquad u(0,x) = u_{0} \in \dot{H}^{1/2}, \qquad u_{t}(0, x) = u_{1} \in \dot{H}^{-1/2}, \qquad u_{0}, u_{1} \qquad \text{radial},
\end{equation}
then $u$ is a global solution to $(\ref{3.1})$ and scatters, that is
\begin{equation}\label{3.2}
\| u \|_{L_{t,x}^{\frac{2(d + 1)}{d - 1}}(\mathbb{R} \times \mathbb{R}^{d})} \leq C(u_{0}, u_{1}) < \infty.
\end{equation}
\end{theorem}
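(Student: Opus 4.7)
The plan is to adapt the argument of Theorem \ref{t2.1} to $d > 4$, using the modified splitting $(\ref{7.4})$ in place of the clean splitting $(\ref{2.15})$. Following $(\ref{2.4})$--$(\ref{2.14})$, I would translate so the initial data sits at $t = 1$ and then regularize to produce $u^\sigma_0 = v_0 + w^\sigma_0$, $u^\sigma_1 = v_1 + w^\sigma_1$ with $(v_0, v_1)$ compactly supported and band-limited so that $\mathcal E(v)(1) \lesssim R^2 \epsilon^2$, and $(w^\sigma_0, w^\sigma_1)$ of size below $\epsilon$ in $\dot H^{1/2} \times \dot H^{-1/2}$. Theorem \ref{t5.1} gives global existence and scattering for each $u^\sigma$, so a standard perturbation argument reduces the task to producing a bound on $\|u^\sigma\|_{L^{2(d+1)/(d-1)}_{t,x}}$ that is independent of $\sigma$. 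Decomposing $u^\sigma = v + w$ via $(\ref{7.4})$, Theorem \ref{t7.1} supplies the weighted Strichartz estimates $(\ref{7.6})$--$(\ref{7.7})$ for $w$.

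Writing $v_{tt} - \Delta v + |v|^{4/(d-1)} v = G$ with $G = |v|^{4/(d-1)} v - (1-\chi(u/w))|u|^{4/(d-1)} u$, the support properties of $\chi$ combined with the pointwise inequality $(\ref{7.1})$ yield
\[
|G| \lesssim |w|\,|v|^{4/(d-1)}\,\mathbf{1}_{\{|v| \gtrsim |w|\}} + |w|^{1 + 4/(d-1)}\,\mathbf{1}_{\{|v| \lesssim |w|\}}.
\]
Differentiating $\mathcal E(v)$ as in $(\ref{2.16})$ and splitting the spatial integral into $|x| \geq \delta|t|$ and $|x| \leq \delta|t|$ as in $(\ref{2.21})$--$(\ref{2.22})$, the exterior piece is controlled via the dispersive bound $(\ref{1.17.1})$ paired with the weighted norms of $w$. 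On the interior I would generalize Propositions \ref{p2.2}--\ref{p2.4} to arbitrary $d > 4$, replacing the factor $t^{2/5}\mathcal E(t)^{3/5}$ by $t^{\alpha(d)}\mathcal E(t)^{1-\alpha(d)}$ for the appropriate dimensional exponent $\alpha(d)$, and then mimic $(\ref{2.28})$--$(\ref{2.30})$ using H\"older against the weighted Strichartz estimates of Lemma \ref{l4.4} and Theorem \ref{t7.1}.

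The main obstacle is exactly the one flagged at the start of Section 5: with $4/(d-1) < 1$, $(\ref{7.1})$ gives at most a first power of $v$ in the cross terms, so the exponent arithmetic that closed $(\ref{2.61})$ does not apply, and the local-energy estimate only yields the weighted $L^2$ bound $(\ref{7.2})$ with a $\ln(T)$ defect. This is precisely what the modified splitting $(\ref{7.4})$ is designed for: the region where $|v| \lesssim |w|$ has been shunted into the $w$ equation, so the surviving forcing on $v$ is $O(|w|\,|v|^{4/(d-1)})$ with $|v| \gtrsim |w|$. To avoid the logarithm I would introduce the threshold $\mathcal R = \inf\{(\mathcal E(T)/T^2 + T^{\alpha(d)}\mathcal E(T)^{1-\alpha(d)}/T^2)^{-1},\ \delta|t|\}$ of $(\ref{7.3})$ and split the interior into $|x| \leq \mathcal R$ and $\mathcal R \leq |x| \leq \delta|t|$. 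On the outer annulus, interpolation between the $L^\infty_t L^2_x$ bound on $(t \pm |x|)Lv$ coming from the conformal energy and the weighted $L^2$ estimate $(\ref{7.2})$ picks up a factor $\mathcal R^{\beta-1}$ that matches $(\mathcal E(T)/T^2 + \ldots)^\beta$ for some $0 < \beta < 1$, mirroring $(\ref{2.63})$. On the core $|x| \leq \mathcal R$, the radial Sobolev embedding together with the volume of the core furnishes enough powers of $\mathcal R$ to absorb the logarithm, now without needing a higher power of $v$. Summing over dyadic time scales via $(\ref{2.23})$--$(\ref{2.25})$ then closes an inequality of the form $\int_1^\infty \mathcal E(t)/t^2\, dt \leq \mathcal E(1) + \eta \int_1^\infty \mathcal E(t)/t^2\, dt$ for $\eta = \eta(\epsilon,\delta) \ll 1$, from which $(\ref{3.2})$ follows exactly as in Theorem \ref{t2.1}.
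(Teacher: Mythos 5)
Your proposal takes essentially the same route as the paper's proof: the same regularization and reduction to a $\sigma$-independent bound, the same modified splitting $(\ref{7.4})$ with Theorem $\ref{t7.1}$ controlling $w$, the same exterior estimate via weighted radial Strichartz bounds, the same $d$-dimensional analogue of Propositions $\ref{p2.2}$--$\ref{p2.4}$ (Proposition $\ref{p3.2}$), the same threshold $\mathcal R$ as in $(\ref{3.11})$, and the same closure of $\int_{1}^{\infty} \mathcal E(t)/t^{2}\, dt$. One small caveat: on the core $|x| \leq \mathcal R$ the paper does exploit a higher power of $v$ rather than a volume gain --- using $|w| \lesssim |v|$ to write $F \lesssim |w|^{\frac{2}{d-1}} |v|^{1 + \frac{2}{d-1}}$ and pairing it with the Morawetz and local-energy quantities, radial Sobolev embedding, and the weighted estimate $(\ref{7.7})$ with a tiny exponent as in $(\ref{3.16})$--$(\ref{3.16.1})$ --- but this is precisely the exchange your splitting makes available, so the plan matches the paper's argument.
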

\begin{proof}
Let
\begin{equation}\label{3.4}
F = (1 - \chi(\frac{u}{w})) |u|^{\frac{4}{d - 1}} u - |v|^{\frac{4}{d - 1}} v.
\end{equation}
As in $(\ref{2.9})$,
\begin{equation}\label{3.5}
\mathcal E(1) \lesssim R^{2} (\| v_{0} \|_{\dot{H}^{1/2}}^{2} + \| v_{1} \|_{\dot{H}^{-1/2}}^{2} + \| v_{0} \|_{\dot{H}^{1/2}}^{\frac{2(d + 1)}{d - 1}}).
\end{equation}
Also, as in $(\ref{2.16})$,
\begin{equation}\label{3.6}
\frac{d}{dt} \mathcal E(v) = -\langle (t + |x|) Lv + (d - 1)v, (t + |x|) F \rangle - \langle (t - |x|) \underline{L}v + (d - 1)v, (t - |x|) F \rangle.
\end{equation}
Since $1 - \chi(\frac{u}{w})$ is supported on the set $|u| \geq 3 |w|$, and therefore, $|v| \gtrsim |w|$ on the support of $(1 - \chi(\frac{u}{w}))$. Therefore,
\begin{equation}\label{3.8}
F \lesssim |v|^{\frac{d + 3}{d - 1}}.
\end{equation}
Also,
\begin{equation}\label{3.7}
\aligned
F = (1 - \chi(\frac{u}{w})) |u|^{\frac{4}{d - 1}} u - |v|^{\frac{4}{d - 1}} v = [|u|^{\frac{4}{d - 1}} u - |v|^{\frac{4}{d - 1}} v] - \chi(\frac{u}{w}) |u|^{\frac{4}{d - 1}} u \\ \lesssim |w| (|v|^{\frac{4}{d - 1}} + |w|^{\frac{4}{d - 1}}) + |w|^{\frac{d + 3}{d - 1}} \lesssim |w| |v|^{\frac{4}{d - 1}}.
\endaligned
\end{equation}

Next,
\begin{equation}\label{3.9}
\aligned
\| |x| F \|_{L_{x}^{2}(\mathbb{R}^{d})} \lesssim \| |x| w \|_{L_{x}^{\frac{2(d + 1)}{d - 3}}(\mathbb{R}^{d})} \| v \|_{L_{x}^{\frac{2(d + 1)}{d - 1}}(\mathbb{R}^{d})}^{\frac{4}{d - 1}} \lesssim \frac{\mathcal E(t)^{\frac{2}{d + 1}}}{t^{\frac{4}{d + 1}}} \| |x| w \|_{L_{x}^{\frac{2(d + 1)}{d - 3}}(\mathbb{R}^{d})}.
\endaligned
\end{equation}
Using $(\ref{4.17})$ and the radial Sobolev embedding theorem,
\begin{equation}\label{3.9.1}
\| |x| w \|_{L_{t,x}^{\frac{2(d + 1)}{d - 3}}(\mathbb{R} \times \mathbb{R}^{d})} \lesssim \| |x| w \|_{L_{t}^{2} L_{x}^{\frac{2d}{d - 4}}(\mathbb{R} \times \mathbb{R}^{d})}^{\frac{d - 3}{d + 1}} \| |x| w \|_{L_{t}^{\infty} L_{x}^{\frac{2d}{d - 3}}(\mathbb{R} \times \mathbb{R}^{d})}^{\frac{4}{d - 1}} \lesssim \epsilon.
\end{equation}

Plugging $(\ref{3.9})$ and $(\ref{3.9.1})$ into $(\ref{2.25})$ gives a similar bound. Next,
\begin{equation}\label{3.10}
\aligned
\| t F \|_{L^{2}(|x| \geq \delta |t|)} \lesssim \frac{1}{\delta} \| |x| F \|_{L^{2}},
\endaligned
\end{equation}
which we can also plug into $(\ref{3.9})$ and $(\ref{3.9.1})$.\medskip

Now define
\begin{equation}\label{3.11}
\mathcal R = \inf \{ (\frac{\mathcal E(t)}{t^{2}} + \frac{t^{\frac{2}{d + 1}} \mathcal E(t)^{\frac{d - 1}{d + 1}}}{t^{2}})^{-1}, \delta |t| \}.
\end{equation}
When $d > 5$ let $r = \frac{2(d - 1)}{d - 5}$, and then if $t' = \sup \{ 1, \delta^{1/2} t \}$,
\begin{equation}\label{3.12}
\aligned
t \int_{t'}^{t} \int_{\mathcal R \leq |x| \leq \delta |t|} |(\tau + |x|) Lv + (d - 1) v| |F| dx d\tau \\ \lesssim t \| (t + |x|) Lv + (d - 1) v \|_{L_{t}^{\infty} L_{x}^{2}}^{\frac{4}{d - 1}} \| |x|^{-1} \{ (t + |x|) Lv + (d - 1) v \} \|_{L_{t,x}^{2}(\mathcal R \leq |x| \leq \delta t)}^{\frac{d - 5}{d - 1}} \| \frac{1}{|x|^{3/2}} v \|_{L_{t,x}^{2}}^{\frac{4}{d - 1}} \| |x|^{\frac{d + 1}{d - 1}} w \|_{L_{t}^{2} L_{x}^{r}} \\
\lesssim t \mathcal E(t)^{\frac{2}{d - 1}} \| |x|^{-1} \{ (t + |x|) Lv + (d - 1) v \} \|_{L_{t,x}^{2}(\mathcal R \leq |x| \leq \delta t)}^{\frac{d - 5}{d - 1}} \| \frac{1}{|x|^{3/2}} v \|_{L_{t,x}^{2}}^{\frac{4}{d - 1}} \| |x|^{\frac{d + 1}{d - 1}} w \|_{L_{t}^{2} L_{x}^{r}}.
\endaligned
\end{equation}
\begin{remark}
Once again, the terms with $(t + |x|) Lv$ and $(t - |x|) \underline{L} v$ can be handled in exactly the same manner.
\end{remark}
Then by H{\"o}lder's inequality and $(\ref{3.11})$,
\begin{equation}\label{3.13}
\aligned
\lesssim t \mathcal E(t)^{\frac{2}{d - 1}} (\frac{\mathcal E(t)}{t^{2}} + \frac{t^{\frac{2}{d + 1}} \mathcal E(t)^{\frac{d - 1}{d + 1}}}{t^{2}})^{\frac{d - 5}{2(d - 1)}} \cdot (\sup_{\mathcal R \leq R \leq \frac{\delta t}{2}} R^{-1/2} t \| \nabla_{t,x} v \|_{L_{t,x}^{2}(R \leq |x| \leq 2R)} + t \| \frac{1}{|x|^{3/2}} v \|_{L_{t,x}^{2}(|x| \leq \delta t)})^{\frac{d - 5}{d - 1}} \\
\times (\int_{t'}^{t} \int_{|x| \leq \delta t} \frac{1}{|x|^{3}} v^{2} dx dt)^{\frac{2}{d - 1}} \| |x|^{\frac{d + 1}{d - 1}} w \|_{L_{t}^{2} L_{x}^{r}}.
\endaligned
\end{equation}
When $d = 5$,
\begin{equation}\label{3.15}
\aligned
t \int_{t'}^{t} \int_{\mathcal R \leq |x| \leq \delta |t|} |(\tau + |x|) Lv + (d - 1) v| |F| dx d\tau \\
\lesssim \| |x|^{-\frac{4}{9}} \{ |x| w \} \|_{L_{t}^{2} L_{x}^{6}(|x| \geq \mathcal R)} \| \frac{1}{|x|^{1/3}} v \|_{L_{t,x}^{3}} \| |x|^{-\frac{2}{3}} \{ (t + |x|) Lv + (d - 1) v \} \|_{L_{t,x}^{2}(|x| \geq \mathcal R)}^{1/3} \| (t + |x|) Lv + (d - 1) v \|_{L_{t}^{\infty} L_{x}^{2}}^{2/3} \\
\lesssim t \mathcal E(t)^{\frac{1}{3}} (\frac{\mathcal E(t)}{t^{2}} + \frac{t^{\frac{1}{3}} \mathcal E(t)^{\frac{2}{3}}}{t^{2}})^{\frac{1}{6}}(\int_{t'}^{t} \int_{|x| \leq \delta t} \frac{1}{|x|} |v|^{3} dx d\tau)^{1/3} \\ \times (\sup_{\mathcal R \leq R \leq \frac{\delta t}{2}} R^{-1/2} t \| \nabla_{t,x} v \|_{L_{t,x}^{2}(R \leq |x| \leq 2R)} + t \| \frac{1}{|x|^{3/2}} v \|_{L_{t,x}^{2}(|x| \leq \delta t)})^{\frac{1}{3}}  \| |x| w \|_{L_{t}^{2} L_{x}^{10}}.
\endaligned
\end{equation}

For $|x| \leq \mathcal R$, let
\begin{equation}\label{3.16.1}
s = \frac{d - 2}{2} - \frac{d - 2}{2} \frac{d - 1}{2} \alpha - \frac{d - 1}{2} \alpha, \qquad \frac{1}{p} = \frac{1}{2000d^{3}}, \qquad \alpha = \frac{1}{1000d^{3}}, \qquad \beta = \frac{1}{d - 3} \frac{(\frac{d - 3}{d - 1})}{(\frac{d - 3}{d - 1} + \alpha)},
\end{equation}
\begin{equation}\label{3.16}
\aligned
t \int_{t'}^{t} \int_{|x| \leq \mathcal R} |(t + |x|) Lv + (d - 1)v| |F| dx dt \\ \lesssim t \| (t + |x|) Lv + (d - 1) v \|_{L_{t}^{\infty} L_{x}^{2}}^{\frac{2}{d - 1} - \alpha} \| |x|^{-\frac{1}{2} + \beta} \{ (t + |x|) Lv + (d - 1) v \} \|_{L_{t,x}^{2}(|x| \leq \mathcal R)}^{\frac{d - 3}{d - 1} + \alpha} \\ \times \| \frac{1}{|x|^{3/2}} v \|_{L_{t,x}^{2}}^{1 - \alpha} \| |x|^{\frac{d - 2}{2}} v \|_{L_{t,x}^{\infty}}^{\frac{2}{d - 1} + \alpha}  \| |x|^{s} w \|_{L_{t}^{2} L_{x}^{p}}^{\frac{2}{d - 1}}  \\
\lesssim t \mathcal E(t)^{\frac{1}{d - 1} - \frac{\alpha}{2}} \mathcal R^{\frac{1}{d - 1}} t^{\frac{d - 3}{d - 1} + \alpha}(\sup_{0 < R \leq \delta t} R^{-1/2} \| \nabla_{t,x} v \|_{L_{t,x}^{2}(|x| \leq R)} + \| \frac{1}{|x|^{3/2}} v \|_{L_{t,x}^{2}(|x| \leq \delta t)})^{\frac{d - 3}{(d - 1)} + \alpha} \\
\times \| \frac{1}{|x|^{3/2}} v \|_{L_{t,x}^{2}(|x| \leq \delta t)}^{1 - \alpha} \| \chi(\frac{r}{\delta t}) v \|_{\dot{H}^{1}}^{\frac{2}{d - 1} + \alpha} \| |x|^{s} w \|_{L_{t}^{2} L_{x}^{p}}^{\frac{2}{d - 1}}.
\endaligned
\end{equation}

Now, similar to Proposition $\ref{p2.4}$,
\begin{proposition}\label{p3.2}
For $d > 4$, if $T > 1$, $T' = \sup \{ 1, \delta^{1/2} T \}$,
\begin{equation}\label{3.17}
\aligned
\sup_{R > 0} R^{-1} \int_{T'}^{T} \int_{|x| \leq R} \chi(\frac{x}{\delta T}) [|\nabla v|^{2} + v_{t}^{2}] dx dt \\ + \int_{T'}^{T} \int \chi(\frac{x}{\delta T}) [\frac{1}{|x|^{3}} v^{2} + \frac{1}{|x|} |v|^{\frac{2(d + 1)}{d - 1}}] dx dt \lesssim_{\delta} \frac{\mathcal E(T)}{T^{2}} + \frac{T^{\frac{2}{d + 1}} \mathcal E(T)^{\frac{d - 1}{d + 1}}}{T^{2}}.
\endaligned
\end{equation}
\end{proposition}
\begin{proof}
Following $(\ref{2.31})$ and $(\ref{2.32})$, for $t \in [\delta^{1/2} t, t]$,
\begin{equation}\label{3.18}
\| \chi(\frac{x}{\delta T}) \nabla_{t,x} v \|_{L^{2}}^{2} \lesssim \frac{\mathcal E(T)}{T^{2}} + \frac{1}{T^{2}} \| v \|_{L^{2}(|x| \leq 2 \delta T)}^{2},
\end{equation}
and by H{\"o}lder's inequality,
\begin{equation}\label{3.19}
\frac{1}{T^{2}} \| v \|_{L^{2}(|x| \leq 2 \delta T)}^{2} \lesssim T^{-\frac{2}{d + 1}} \| v \|_{L_{x}^{\frac{2(d + 1)}{d - 1}}}^{2} \lesssim T^{-\frac{2}{d + 1}} \frac{\mathcal E(T)^{\frac{d - 1}{d + 1}}}{t^{\frac{2(d - 1)}{d + 1}}} \lesssim \frac{T^{\frac{2}{d + 1}} \mathcal E(T)^{\frac{d - 1}{d + 1}}}{T^{2}}.
\end{equation}
By Propositions $\ref{p5.3}$ and $\ref{p5.4}$, it only remains to handle the error terms arising from $F$, where $F$ satisfies $(\ref{3.8})$ and $(\ref{3.7})$. We can do this using $(\ref{3.12})$, $(\ref{3.13})$, and $(\ref{3.15})$ combined with the analysis in $(\ref{2.65})$--$(\ref{2.72})$ applied to the $d > 4$ case.
\end{proof}

Plugging in $(\ref{3.17})$ to $(\ref{3.12})$--$(\ref{3.15})$, if $t' = \sup \{ 1, \delta^{1/2} t \}$,

\begin{equation}\label{3.20}
\aligned
\mathcal E(t) \lesssim \mathcal E(t') + \int_{t'}^{t} \frac{\mathcal E(\tau)^{\frac{2}{d + 1} + \frac{1}{2}}}{\tau^{\frac{4}{d + 1}}} \| |x| w \|_{L_{x}^{\frac{2(d + 1)}{d - 3}}(\mathbb{R}^{d})} d\tau \\ + C(\delta) t \mathcal E(t)^{\frac{1}{d - 1} - \frac{\alpha}{2}} t^{\frac{d - 3}{d - 1} + \alpha} (\frac{\mathcal E(t)}{t^{2}} + \frac{t^{\frac{2}{d + 1}} \mathcal E(t)^{\frac{d - 1}{d + 1}}}{t^{2}})^{\frac{d - 2}{d - 1} + \frac{\alpha}{2}} \| |x|^{s} w \|_{L_{t}^{2} L_{x}^{p}}^{\frac{2}{d - 1}} \\
+ C(\delta) t^{4/3} \mathcal E(t)^{1/3} (\frac{\mathcal E(t)}{t^{2}} + \frac{t^{1/3} \mathcal E(t)^{2/3}}{t^{2}})^{2/3} \| |x| w \|_{L_{t}^{2} L_{x}^{10}}, \qquad \text{if } d = 5, \\
+ C(\delta) t t^{\frac{d - 5}{d - 1}} \mathcal E(t)^{\frac{2}{d - 1}} (\frac{\mathcal E(t)}{t^{2}} + \frac{t^{\frac{2}{d + 1}} \mathcal E(t)^{\frac{d - 1}{d + 1}}}{t^{2}}) \| |x|^{\frac{d + 1}{d - 1}} w \|_{L_{t}^{2} L_{x}^{\frac{2(d - 1)}{d - 5}}}, \qquad \text{if} \qquad d > 5,
\endaligned
\end{equation}
where $s, p, \alpha, \beta$ are given by $(\ref{3.16.1})$. Following the computations in $(\ref{2.23})$--$(\ref{2.25})$ and $(\ref{2.47})$--$(\ref{2.67})$, we obtain a uniform bound on
\begin{equation}\label{3.21}
\int_{1}^{\infty} \frac{\mathcal E(t)}{t^{2}} dt < \infty,
\end{equation}
which proves the theorem.

\end{proof}

\section{Profile decomposition}
\begin{proof}[Proof of Theorem $\ref{t1.1}$]
In light of Theorems $\ref{t2.1}$ and $\ref{t3.1}$, to prove Theorem $\ref{t1.1}$, it suffices to prove that if $(u_{n}^{0}, u_{n}^{1})$ is a sequence of initial data satisfying
\begin{equation}\label{6.1}
\| u_{0}^{n} \|_{\dot{H}^{1/2}} + \| u_{1}^{n} \|_{\dot{H}^{-1/2}} \leq A < \infty,
\end{equation}
then
\begin{equation}\label{6.2}
\sup_{n} \| u^{n} \|_{L_{t,x}^{\frac{2(d + 1)}{d - 1}}(\mathbb{R} \times \mathbb{R}^{d})} < \infty.
\end{equation}
is uniformly bounded, where $u^{n}$ is the solution to $(\ref{1.1})$ with initial data $(u_{0}^{n}, u_{1}^{n})$.

Indeed, since $\dot{H}^{1/2} \times \dot{H}^{-1/2}$ is separable, we can take a dense sequence $(u_{0}^{n}, u_{1}^{n})$ in $(\ref{6.1})$. Passing to a subsequence where $(\ref{6.2})$ is increasing, it is enough to show that Theorems $\ref{t2.1}$, $\ref{t3.1}$, and a standard profile decomposition argument imply that $(\ref{6.2})$ is uniformly bounded for any $A$. By standard perturbative arguments, Theorem $\ref{t1.1}$ follows.
\begin{remark}
Observe that this argument does not give any idea how the function on the right hand side of $(\ref{1.3})$ depends on $A$.
\end{remark}
The argument proving $(\ref{6.2})$ is identical to the argument in \cite{dodson2018global} for the cubic wave equation, $(\ref{1.1})$ with $d = 3$, and uses the profile decomposition in \cite{ramos2012refinement}.

\begin{remark}
It is useful to use the notation $S(t)(f, g)$, which denotes the solution to the free wave equation with initial data $(f, g)$,
\begin{equation}\label{6.9}
S(t)(f, g) = \cos(t \sqrt{-\Delta}) f + \frac{\sin(t \sqrt{-\Delta})}{\sqrt{-\Delta}} g.
\end{equation}
\end{remark}

\begin{theorem}[Profile decomposition]\label{t6.1}
Suppose that there is a uniformly bounded, radially symmetric sequence
\begin{equation}\label{6.3}
\| u_{0}^{n} \|_{\dot{H}^{1/2}(\mathbb{R}^{d})} + \| u_{1}^{n} \|_{\dot{H}^{-1/2}(\mathbb{R}^{d})} \leq A < \infty.
\end{equation}
Then there exists a subsequence, also denoted $(u_{0}^{n}, u_{1}^{n}) \subset \dot{H}^{1/2} \times \dot{H}^{-1/2}$ such that for any $N < \infty$,
\begin{equation}\label{6.4}
S(t)(u_{0}^{n}, u_{1}^{n}) = \sum_{j = 1}^{N} \Gamma_{n}^{j} S(t)(\phi_{0}^{j}, \phi_{1}^{j}) + S(t)(R_{0, n}^{N}, R_{1,n}^{N}),
\end{equation}
with
\begin{equation}\label{6.5}
\lim_{N \rightarrow \infty} \limsup_{n \rightarrow \infty} \| S(t)(R_{0,n}^{N}, R_{1,n}^{N}) \|_{L_{t,x}^{\frac{2(d + 1)}{d - 1}}(\mathbb{R} \times \mathbb{R}^{d})} = 0.
\end{equation}
Here, $\Gamma_{n}^{j} = (\lambda_{n}^{j}, t_{n}^{j})$ belongs to the group $(0, \infty) \times \mathbb{R}$, which acts by
\begin{equation}\label{6.6}
\Gamma_{n}^{j} F(t,x) = (\lambda_{n}^{j})^{\frac{d - 1}{2}} F(\lambda_{n}^{j} (t - t_{n}^{j}), \lambda_{n}^{j} x).
\end{equation}
The $\Gamma_{n}^{j}$ are pairwise orthogonal, that is, for every $j \neq k$,
\begin{equation}\label{6.7}
\lim_{n \rightarrow \infty} \frac{\lambda_{n}^{j}}{\lambda_{n}^{k}} + \frac{\lambda_{n}^{k}}{\lambda_{n}^{j}} + (\lambda_{n}^{j})^{1/2} (\lambda_{n}^{k})^{1/2} |t_{n}^{j} - t_{n}^{k}| = \infty.
\end{equation}
Furthermore, for every $N \geq 1$,
\begin{equation}\label{6.8}
\aligned
\| (u_{0, n}, u_{1, n}) \|_{\dot{H}^{1/2} \times \dot{H}^{-1/2}}^{2} = \sum_{j = 1}^{N} \| (\phi_{0}^{j}, \phi_{0}^{k}) \|_{\dot{H}^{1/2} \times \dot{H}^{-1/2}}^{2} \\ + \| (R_{0, n}^{N}, R_{1, n}^{N}) \|_{\dot{H}^{1/2} \times \dot{H}^{-1/2}}^{2} + o_{n}(1).
\endaligned
\end{equation}
\end{theorem}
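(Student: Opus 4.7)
The plan is to carry out the standard Bahouri--G\'erard profile decomposition procedure, adapted to the group $G=(0,\infty)\times\mathbb{R}$ of scalings and time translations that acts on radial data in $\dot H^{1/2}\times\dot H^{-1/2}$ via \eqref{6.6}. The crucial input is the refined radial Strichartz estimate of \cite{ramos2012refinement}, which in essence states that any nontrivial concentration of the Strichartz norm $\|S(t)(f,g)\|_{L^{\frac{2(d+1)}{d-1}}_{t,x}}$ of a radial free evolution is detected by the action of some $\Gamma\in G$: more precisely, there exist an auxiliary norm $Y$ weaker than $L^{\frac{2(d+1)}{d-1}}_{t,x}$ and an exponent $\theta\in(0,1)$ such that
\begin{equation*}
\|S(t)(f,g)\|_{L^{\frac{2(d+1)}{d-1}}_{t,x}} \lesssim \|(f,g)\|_{\dot H^{1/2}\times\dot H^{-1/2}}^{\theta}\, \sup_{\Gamma\in G}\|\Gamma^{-1}S(t)(f,g)\|_{Y}^{1-\theta}.
\end{equation*}

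The extraction of profiles is iterative. Set $(R_{0,n}^{0},R_{1,n}^{0})=(u_{0}^{n},u_{1}^{n})$, and suppose the first $J-1$ profiles have already been produced. Let $\varepsilon_{J}=\limsup_{n}\|S(t)(R_{0,n}^{J-1},R_{1,n}^{J-1})\|_{L^{\frac{2(d+1)}{d-1}}_{t,x}}$. If $\varepsilon_{J}>0$, the refined Strichartz estimate combined with the weak compactness of the unit ball of $\dot H^{1/2}\times\dot H^{-1/2}$ produces, after passing to a subsequence, parameters $\Gamma_{n}^{J}\in G$ and a nonzero weak limit $(\phi_{0}^{J},\phi_{1}^{J})$ of $(\Gamma_{n}^{J})^{-1}(R_{0,n}^{J-1},R_{1,n}^{J-1})$, with a quantitative lower bound on $\|(\phi_{0}^{J},\phi_{1}^{J})\|_{\dot H^{1/2}\times\dot H^{-1/2}}$ depending only on $A$ and $\varepsilon_{J}$. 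Setting $(R_{0,n}^{J},R_{1,n}^{J})=(R_{0,n}^{J-1},R_{1,n}^{J-1})-\Gamma_{n}^{J}(\phi_{0}^{J},\phi_{1}^{J})$ and iterating in $J$, a diagonal argument selects a single subsequence that works for every $J$ simultaneously.

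The orthogonality property \eqref{6.7} for $j\neq k$ follows by the standard contradiction argument: if some pair of parameters were not asymptotically orthogonal, then $(\Gamma_{n}^{j})^{-1}\Gamma_{n}^{k}$ would converge along a subsequence to a fixed element of $G$, contradicting the weak convergence of $(\Gamma_{n}^{k})^{-1}(R_{0,n}^{k-1},R_{1,n}^{k-1})$ to $(\phi_{0}^{k},\phi_{1}^{k})$ built into the construction. Together with the unitarity of $S(t)$ on $\dot H^{1/2}\times\dot H^{-1/2}$ and the fact that cross-terms vanish in the limit (by density, approximating each $\phi^{j}$ by Schwartz data and using the dispersive separation forced by \eqref{6.7}), this yields the Pythagorean identity \eqref{6.8}. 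From \eqref{6.8} we obtain $\sum_{j}\|(\phi_{0}^{j},\phi_{1}^{j})\|^{2}\leq A^{2}$, hence $\varepsilon_{J}\to 0$ as $J\to\infty$; reinserting this into the refined Strichartz estimate yields \eqref{6.5}.

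The main obstacle is really the refined Strichartz estimate itself, adapted to the correct symmetry group $G$ and operating at the sharp conformal exponent $\frac{2(d+1)}{d-1}$; this is the essential content of \cite{ramos2012refinement}, and without it the whole machinery collapses. Once that estimate is available, the iterative extraction, the asymptotic orthogonality, and the Pythagorean identity are essentially formal consequences of weak compactness combined with the refined estimate, and can be carried out exactly as for the cubic wave equation in dimension three in \cite{dodson2018global}.
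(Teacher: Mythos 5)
Your proposal is correct in outline and takes essentially the same route as the paper: the paper does not actually prove Theorem \ref{t6.1}, but quotes it from \cite{ramos2012refinement} (following the treatment of the radial cubic case in \cite{dodson2018global}), and you defer exactly the same key input --- the refined Strichartz estimate at the conformal exponent, adapted to the scaling/time-translation group acting on radial data --- to that same reference. Given that input, your iterative extraction, orthogonality, Pythagorean expansion, and vanishing-remainder steps are the standard Bahouri--G\'erard machinery, which is precisely what \cite{ramos2012refinement} carries out.
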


In the course of proving Theorem $\ref{t6.1}$, \cite{ramos2012refinement} proved
\begin{equation}\label{6.10}
S(\lambda_{n}^{j} t_{n}^{j})(\frac{1}{(\lambda_{n}^{j})^{\frac{d - 1}{2}}} u_{0}^{n}(\frac{x}{\lambda_{n}^{j}}), \frac{1}{(\lambda_{n}^{j})^{\frac{d + 1}{2}}} u_{1}^{n}(\frac{x}{\lambda_{n}^{j}})) \rightharpoonup \phi_{0}^{j}(x),
\end{equation}
weakly in $\dot{H}^{1/2}(\mathbb{R}^{d})$, and
\begin{equation}\label{6.11}
\partial_{t}S(t + \lambda_{n}^{j} t_{n}^{j})(\frac{1}{(\lambda_{n}^{j})^{\frac{d - 1}{2}}} u_{0}^{n}(\frac{x}{\lambda_{n}^{j}}), \frac{1}{(\lambda_{n}^{j})^{\frac{d + 1}{2}}} u_{1}^{n}(\frac{x}{\lambda_{n}^{j}}))|_{t = 0} \rightharpoonup \phi_{1}^{j}(x)
\end{equation}
weakly in $\dot{H}^{-1/2}(\mathbb{R}^{d})$.\medskip

Suppose that for some $j$, $\lambda_{n}^{j} t_{n}^{j}$ is uniformly bounded. Then after passing to a subsequence, $\lambda_{n}^{j} t_{n}^{j}$ converges to some $t^{j}$. Changing $(\phi_{0}^{j}, \phi_{1}^{j})$ to $(S(-t^{j})(\phi_{0}^{j}, \phi_{1}^{j}), \partial_{t} S(t - t^{j})(\phi_{0}^{j}, \phi_{1}^{j})|_{t = 0})$ and absorbing the error into $(R_{0, n}^{N}, R_{1, n}^{N})$,
\begin{equation}\label{6.12}
(\frac{1}{(\lambda_{n}^{j})^{\frac{d - 1}{2}}} u_{0}^{n}(\frac{x}{\lambda_{n}^{j}}), \frac{1}{(\lambda_{n}^{j})^{\frac{d + 1}{2}}} u_{1}^{n}(\frac{x}{\lambda_{n}^{j}})) \rightharpoonup \phi_{0}^{j}(x), \qquad \text{weakly in} \qquad \dot{H}^{1/2}
\end{equation}
and
\begin{equation}\label{6.13}
\partial_{t}S(t)(\frac{1}{(\lambda_{n}^{j})^{\frac{d - 1}{2}}} u_{0}^{n}(\frac{x}{\lambda_{n}^{j}}), \frac{1}{(\lambda_{n}^{j})^{\frac{d + 1}{2}}} u_{1}^{n}(\frac{x}{\lambda_{n}^{j}}))|_{t = 0} \rightharpoonup \phi_{1}^{j}(x), \qquad \text{weakly in} \qquad \dot{H}^{-1/2}.
\end{equation}
If $u^{(j)}$ is the solution to $(\ref{1.1})$ with initial data $(\phi_{0}^{j}, \phi_{1}^{j})$, then
\begin{equation}\label{6.14}
\| u^{(j)} \|_{L_{t,x}^{\frac{2(d + 1)}{d - 1}}(\mathbb{R} \times \mathbb{R}^{d})} \leq M_{j}.
\end{equation}

Next, suppose that after passing to a subsequence, $\lambda_{n}^{j} t_{n}^{j} \nearrow +\infty$. In this case, for any $(\phi_{0}^{j}, \phi_{1}^{j}) \in \dot{H}^{1/2} \times \dot{H}^{-1/2}$, there exists a solution $u^{(j)}$ to $(\ref{1.1})$ that is globally well-posed and scattering, and furthermore, that $u$ scatters to $S(t)(\phi_{0}^{j}, \phi_{1}^{j})$ as $t \searrow -\infty$.
\begin{equation}\label{6.15}
\lim_{t \rightarrow -\infty} \| u^{(j)}(t) - S(t)(\phi_{0}^{j}, \phi_{1}^{j}) \|_{\dot{H}^{1/2} \times \dot{H}^{-1/2}} = 0.
\end{equation}
Indeed, by Strichartz estimates, the dominated convergence theorem, and the small data arguments in Theorem $\ref{t4.2}$, for some $T_{j} < \infty$ sufficiently large, $(\ref{1.1})$ has a solution $u$ on $(-\infty, -T]$ such that
\begin{equation}\label{6.16}
\| u^{(j)} \|_{L_{t,x}^{\frac{2(d + 1)}{d - 1}}((-\infty, -T_{j}] \times \mathbb{R}^{d})} \lesssim \epsilon_{0}(d), \qquad (u^{(j)}(-T_{j}, x), u_{t}^{(j)}(-T_{j}, x)) = S(-T_{j})(\phi_{0}^{j}, \phi_{1}^{j}),
\end{equation}
where $\epsilon_{0}(d) > 0$ is sufficiently small. Also by Strichartz estimates and small data arguments,
\begin{equation}\label{6.17}
\lim_{t \rightarrow +\infty} \| S(t)(u^{(j)}(-t), u_{t}^{(j)}(-t)) - (\phi_{0}, \phi_{1}) \|_{\dot{H}^{1/2} \times \dot{H}^{-1/2}} \lesssim \epsilon^{\frac{2(d + 1)}{d - 1}}.
\end{equation}
Then by the inverse function theorem, there exists some $(u_{0}^{(j)}(-T_{j}), u_{1}^{(j)}(-T_{j}))$ such that $(\ref{1.1})$ has a solution that scatters backward in time to $S(t)(\phi_{0}^{j}, \phi_{1}^{j})$. Since $u_{0}^{(j)}(-T_{j}) \in \dot{H}^{1/2}$ and $u_{1}^{(j)}(-T_{j}) \in \dot{H}^{-1/2}$, $(\ref{1.1})$ has a solution that scatters forward and backward in time,
\begin{equation}\label{6.18}
\| u^{(j)} \|_{L_{t,x}^{\frac{2(d + 1)}{d - 1}}(\mathbb{R} \times \mathbb{R}^{d})} \leq M_{j} < \infty,
\end{equation}
and $u^{(j)}(-T_{j}, x) = u_{0}^{(j)}(-T_{j}, x)$, $u_{t}^{(j)}(-T_{j}, x) = u_{1}^{(j)}(-T_{j}, x)$. Therefore,
\begin{equation}\label{6.19}
S(-t_{n}^{j})((\lambda_{n}^{j})^{\frac{d - 1}{2}} \phi_{0}^{j}(\lambda_{n}^{j} x), (\lambda_{n}^{j})^{\frac{d + 1}{2}} \phi_{1}^{j}(\lambda_{n}^{j} x))
\end{equation}
converges strongly to
\begin{equation}\label{6.20}
((\lambda_{n}^{j})^{\frac{d - 1}{2}} u^{(j)}(-\lambda_{n}^{j} t_{n}^{j}, \lambda_{n}^{j} x), (\lambda_{n}^{j})^{\frac{d + 1}{2}} u_{t}^{(j)}(-\lambda_{n}^{j} t_{n}^{j}, \lambda_{n}^{j} x))
\end{equation}
in $\dot{H}^{1/2} \times \dot{H}^{-1/2}$, where $u^{j}$ is the solution to $(\ref{1.1})$ that scatters backward in time to $S(t)(\phi_{0}^{j}, \phi_{1}^{j})$, and the remainder may be absorbed into $(R_{0, n}^{N}, R_{1, n}^{N})$. 


The proof for $\lambda_{n}^{j} t_{n}^{j} \searrow -\infty$ is similar.\medskip

By $(\ref{6.8})$, there are only finitely many $j$, say $J$, such that $\| \phi_{0}^{j} \|_{\dot{H}^{1/2}} + \| \phi_{1}^{j} \|_{\dot{H}^{-1/2}} > \epsilon_{0}(d)$. For all other $j$, small data arguments imply
\begin{equation}\label{6.22}
\| u^{(j)} \|_{L_{t,x}^{\frac{2(d + 1)}{d - 1}}(\mathbb{R} \times \mathbb{R}^{d})} \lesssim \| \phi_{0}^{j} \|_{\dot{H}^{1/2}} + \| \phi_{1}^{j} \|_{\dot{H}^{-1/2}}.
\end{equation}
Then by the decoupling property $(\ref{6.7})$, $(\ref{6.8})$, $(\ref{6.12})$, $(\ref{6.22})$, and Theorem $\ref{t4.2}$,
\begin{equation}\label{6.23}
\limsup_{n \nearrow \infty} \| u^{n} \|_{L_{t,x}^{\frac{2(d + 1)}{d - 1}}(\mathbb{R} \times \mathbb{R}^{d})}^{2} \lesssim \sum_{j} \| u^{(j)} \|_{L_{t,x}^{\frac{2(d + 1)}{d - 1}}(\mathbb{R} \times \mathbb{R}^{d})}^{2} \lesssim \sum_{j = 1}^{J} M_{j}^{2} + A^{2} < \infty.
\end{equation}
This proves that $(\ref{7.2})$ holds.
\end{proof}

\section*{Acknowledgements}
During the time of writing this paper, the author was partially supported by NSF Grant DMS-$1764358$. The author is also grateful to Andrew Lawrie and Walter Strauss for some helpful conversations at MIT on subcritical nonlinear wave equations.

\bibliography{biblio}

\begin{thebibliography}{DLMM20}

\bibitem[BC06]{bahouri2006global}
Hajer Bahouri and Jean-Yves Chemin.
\newblock On global well-posedness for defocusing cubic wave equation.
\newblock {\em International Mathematics Research Notices},
  2006(9):54873--54873, 2006.

\bibitem[Bou98]{bourgain1998refinements}
Jean Bourgain.
\newblock Refinements of {S}trichartz' inequality and applications to
  2{D}-{NLS} with critical nonlinearity.
\newblock {\em International Mathematics Research Notices}, 1998(5):253--283,
  1998.

\bibitem[CCT03]{christ2003asymptotics}
Michael Christ, James Colliander, and Terrence Tao.
\newblock Asymptotics, frequency modulation, and low regularity ill-posedness
  for canonical defocusing equations.
\newblock {\em American Journal of Mathematics}, 125(6):1235--1293, 2003.

\bibitem[CK01]{christ2001maximal}
Michael Christ and Alexander Kiselev.
\newblock Maximal functions associated to filtrations.
\newblock {\em Journal of Functional Analysis}, 179(2):409--425, 2001.

\bibitem[DL15a]{dodson2015scattering1}
Benjamin Dodson and Andrew Lawrie.
\newblock Scattering for radial, semi-linear, super-critical wave equations
  with bounded critical norm.
\newblock {\em Archive for Rational Mechanics and Analysis}, 218:1459--1529,
  2015.

\bibitem[DL15b]{dodson2015scattering}
Benjamin Dodson and Andrew Lawrie.
\newblock Scattering for the radial 3{D} cubic wave equation.
\newblock {\em Analysis \& PDE}, 8(2):467--497, 2015.

\bibitem[DLMM20]{dodson2020scattering}
Benjamin Dodson, Andrew Lawrie, Dana Mendelson, and Jason Murphy.
\newblock Scattering for defocusing energy subcritical nonlinear wave
  equations.
\newblock {\em Analysis \& PDE}, 13(7):1995--2090, 2020.

\bibitem[DN23]{duyckaerts2023global}
Thomas Duyckaerts and Giuseppe Negro.
\newblock Global solutions with asymptotic self-similar behaviour for the cubic
  wave equation.
\newblock {\em arXiv preprint arXiv:2304.09567}, 2023.

\bibitem[Dod18a]{dodson2018global1}
Benjamin Dodson.
\newblock Global well-posedness and scattering for the radial, defocusing,
  cubic wave equation with almost sharp initial data.
\newblock {\em Communications in Partial Differential Equations},
  43(10):1413--1455, 2018.

\bibitem[Dod18b]{dodson2018globalAPDE}
Benjamin Dodson.
\newblock Global well-posedness and scattering for the radial, defocusing,
  cubic wave equation with initial data in a critical {B}esov space.
\newblock {\em Analysis \& PDE}, 12(4):1023--1048, 2018.

\bibitem[Dod18c]{dodson2018global2}
Benjamin Dodson.
\newblock Global well-posedness for the radial, defocusing, nonlinear wave
  equation for $3< p< 5$.
\newblock {\em arXiv preprint arXiv:1810.02879}, 2018.

\bibitem[Dod19]{dodson2019global}
Benjamin Dodson.
\newblock Global well-posedness for the defocusing, cubic, nonlinear wave
  equation in three dimensions for radial initial data in. ${H}^{s} \times
  {H}^{s - 1}$, $s > 1/2$.
\newblock {\em International Mathematics Research Notices},
  2019(21):6797--6817, 2019.

\bibitem[Dod21]{dodson2018global}
Benjamin Dodson.
\newblock Global well-posedness and scattering for the radial, defocusing,
  cubic nonlinear wave equation.
\newblock {\em Duke Math. J.}, 170(15):3267--3321, 2021.

\bibitem[Dod22]{dodson2022global}
Benjamin Dodson.
\newblock Global well-posedness of the radial conformal nonlinear wave equation
  with initial data in a critical space.
\newblock {\em arXiv preprint arXiv:2206.13586}, 2022.

\bibitem[GP03]{gallagher2003global}
Isabelle Gallagher and Fabrice Planchon.
\newblock On global solutions to a defocusing semi-linear wave equation.
\newblock {\em Revista Matematica Iberoamericana}, 19(1):161--177, 2003.

\bibitem[GV95]{ginibre1995generalized}
Jean Ginibre and Giorgio Velo.
\newblock Generalized {S}trichartz inequalities for the wave equation.
\newblock {\em Journal of functional analysis}, 133(1):50--68, 1995.

\bibitem[Kap89]{kapitanski1989some}
Lev~Vil'evich Kapitanski.
\newblock Some generalizations of the {S}trichartz--{B}renner inequality.
\newblock {\em Algebra i Analiz}, 1(3):127--159, 1989.

\bibitem[Kat94]{kato1994lq}
Tosio Kato.
\newblock An ${L}^{q, r}$-theory for nonlinear {S}chr{\"o}dinger equations.
\newblock {\em Spectral and scattering theory and applications}, 23:223--238,
  1994.

\bibitem[KPV00]{kenig2000global}
Carlos~E Kenig, Gustavo Ponce, and Luis Vega.
\newblock Global well-posedness for semi-linear wave equations: semi-linear
  waver equations.
\newblock {\em Communications in partial differential equations},
  25(9-10):1741--1752, 2000.

\bibitem[KT98]{keel1998endpoint}
Markus Keel and Terence Tao.
\newblock Endpoint {S}trichartz estimates.
\newblock {\em American Journal of Mathematics}, 120(5):955--980, 1998.

\bibitem[LS95]{lindblad1995existence}
Hans Lindblad and Christopher~D Sogge.
\newblock On existence and scattering with minimal regularity for semilinear
  wave equations.
\newblock {\em Journal of Functional Analysis}, 130(2):357--426, 1995.

\bibitem[Mer93]{merle1993determination}
Frank Merle.
\newblock Determination of blow-up solutions with minimal mass for nonlinear
  {S}chr{\"o}dinger equations with critical power.
\newblock {\em Duke Mathematical Journal}, 69(2):427--454, 1993.

\bibitem[MYZ20]{miao2020global}
Changxing Miao, Jianwei Yang, and Tengfei Zhao.
\newblock The global well-posedness and scattering for the 5-dimensional
  defocusing conformal invariant {NLW} with radial initial data in a critical
  {B}esov space.
\newblock {\em Pacific Journal of Mathematics}, 305(1):251--290, 2020.

\bibitem[Ram12]{ramos2012refinement}
Javier Ramos.
\newblock A refinement of the {S}trichartz inequality for the wave equation
  with applications.
\newblock {\em Advances in Mathematics}, 230(2):649--698, 2012.

\bibitem[Roy08]{roy2007global}
Tristan~Cyrus Roy.
\newblock Global analysis of the defocusing cubic wave equation in dimension 3.
\newblock page~81, 2008.
\newblock Thesis (Ph.D.)--University of California, Los Angeles.

\bibitem[Roy09]{roy2007adapted}
Tristan Roy.
\newblock Adapted linear-nonlinear decomposition and global well-posedness for
  solutions to the defocusing cubic wave equation on $\mathbb{R}^{3}$.
\newblock {\em Discrete Contin. Dyn. Syst.}, 24(4):1307--1323, 2009.

\bibitem[RS04]{rodnianskisterbenz}
Igor Rodnianski and Jacob Sterbenz.
\newblock Angular regularity and {S}trichartz estimates for the wave equation.
\newblock {\em arXiv:math/0402192}, 2004.

\bibitem[She14]{shen2014energy}
Ruipeng Shen.
\newblock On the energy subcritical, nonlinear wave equation in
  $\mathbb{R}^{3}$ with radial data.
\newblock {\em Analysis \& PDE}, 6(8):1929--1987, 2014.

\bibitem[Sog95]{sogge1995lectures}
Christopher~D. Sogge.
\newblock {\em Lectures on non-linear wave equations}, volume~2.
\newblock International Press Boston, MA, 1995.

\bibitem[SS93]{shatah1993regularity}
Jalal Shatah and Michael Struwe.
\newblock Regularity results for nonlinear wave equations.
\newblock {\em Annals of Mathematics}, 138(3):503--518, 1993.

\bibitem[SS00]{smith2000global}
Hart~F Smith and Christopher~D Sogge.
\newblock Global {S}trichartz estimates for nontrapping perturbations of the
  {L}aplacian: {E}stimates for nonthapping perturbations.
\newblock {\em Communications in Partial Differential Equations},
  25(11-12):2171--2183, 2000.

\bibitem[Ste05]{sterbenz2005angular}
Jacob Sterbenz.
\newblock Angular regularity and {S}trichartz estimates for the wave equation.
\newblock {\em International Mathematics Research Notices}, 2005(4):187--231,
  2005.

\bibitem[Str68]{strauss1968decay}
Walter~A Strauss.
\newblock Decay and asymptotics for box u= f (u).
\newblock {\em Journal of Functional Analysis}, 2(4):409--457, 1968.

\bibitem[Str77]{strichartz1977restrictions}
Robert~S Strichartz.
\newblock Restrictions of {F}ourier transforms to quadratic surfaces and decay
  of solutions of wave equations.
\newblock {\em Duke Mathematical Journal}, 44(3):705--714, 1977.

\bibitem[Str81]{strauss1981nonlinear}
Walter~A Strauss.
\newblock Nonlinear scattering theory at low energy.
\newblock {\em Journal of Functional Analysis}, 41(1):110--133, 1981.

\bibitem[Tao00]{tao2000spherically}
Terence Tao.
\newblock Spherically averaged endpoint {S}trichartz estimates for the two
  dimensional {S}chr{\"o}dinger equation.
\newblock {\em Communications in Partial Differential Equations},
  25(7-8):1471--1485, 2000.

\bibitem[Tao06]{tao2006nonlinear}
Terence Tao.
\newblock {\em Nonlinear {D}ispersive {E}quations: {L}ocal and {G}lobal
  {A}nalysis}.
\newblock Number 106. American Mathematical Soc., 2006.

\end{thebibliography}
\bibliographystyle{alpha}

\end{document}